\newif\ifcol
\newcommand{\colorr}{\color[rgb]{0.8,0,0}}
\newcommand{\colorr}{\color{black}}
\newtheorem{definition}{Definition}[section]
\newtheorem{lemma}{Lemma}[section]
\newtheorem{proposition}{Proposition}[section]
\newtheorem{theorem}{Theorem}[section]
\newtheorem{remark}{Remark}[section]	
\newtheorem{corollary}{Corollary}[section]
\newcommand{\mca}{\mathcal{A}}
\newcommand{\mcb}{\mathcal{B}}
\newcommand{\mcd}{\mathcal{D}}
\newcommand{\mce}{\mathcal{E}}
\newcommand{\mcf}{\mathcal{F}}
\newcommand{\mcg}{\mathcal{G}}
\newcommand{\mci}{\mathcal{I}}
\newcommand{\mcl}{\mathcal{L}}
\newcommand{\mcm}{\mathcal{M}}
\newcommand{\mcn}{\mathcal{N}}
\newcommand{\mcq}{\mathcal{Q}}
\newcommand{\mcr}{\mathcal{R}}
\newcommand{\mcx}{\mathcal{X}}
\newcommand{\mcy}{\mathcal{Y}}
\newcommand{\mbbn}{\mathbb{N}}
\newcommand{\mbbr}{\mathbb{R}}
\newcommand{\mbbz}{\mathbb{Z}}
\newcommand{\mfa}{\mathfrak{A}}
\newcommand{\mfb}{\mathfrak{B}}
\newcommand{\mfg}{\mathfrak{G}}
\newcommand{\mfi}{\mathfrak{I}}
\newcommand{\mfs}{\mathfrak{S}}
\newcommand{\F}{{\bf F}}
\newcommand{\EQ}[1]{\begin{equation}{#1}\end{equation}}
\newcommand{\EQQ}[1]{\begin{equation}{#1 \nonumber}\end{equation}}
\newcommand{\EQN}[1]{\begin{equation}\begin{split}{#1}\end{split}\end{equation}}
\newcommand{\EQNN}[1]{\begin{equation}\begin{split}{#1 \nonumber}\end{split}\end{equation}}
\newcommand{\MAT}[2]{\left(\begin{array}{#1} #2 \end{array} \right)} 
\newcommand{\DIV}[2]{\left\{\begin{array}{#1} #2 \end{array} \right.} 
\newcommand{\ITEM}[1]{\begin{itemize} #1 \end{itemize}}
\newcommand{\PT}{\partial_\theta}
\newcommand{\TOD}{\overset{d}\to}
\newcommand{\TOP}{\overset{P}\to}
\newcommand{\BDG}{Burkholder-Davis-Gundy inequality} 
\newcommand{\CS}{Cauchy-Schwartz inequality} 
\newcommand{\SI}{S_i^{n,l}}
\newcommand{\SIm}{S_{i-1}^{n,l}}
\newcommand{\HSN}{\hat{\sigma}_n}
\newcommand{\HTN}{\hat{\theta}_n}
\newcommand{\INVS}{S_n^{-1}(\sigma)}
\newcommand{\INVSz}{S_n^{-1}(\sigma_0)}
\newcommand{\PS}{\partial_\sigma}
\newcommand{\SL}{S_{n,0}^{(k)}}
\newcommand{\PSINVS}{\PS S_{n,0}^{-1}}
\newcommand{\SUMK}{\sum_{k=1}^{q_n}}
\newcommand{\SUML}{\sum_{k=1}^{L_n}}
\newcommand{\SUMLL}{\sum_{l=1}^2}
\newcommand{\SUMP}{\sum_{p=0}^\infty}
\newcommand{\SUMPP}{\sum_{p=1}^\infty}
\newcommand{\RITD}{\tilde{\mcd}^{-1/2}}
\newcommand{\NJ}{\mcn^{n,l}}
\newcommand{\BN}{\bar{\mcn}}
\newcommand{\TSIG}{\tilde{\Sigma}}
\newcommand{\TMCD}{\tilde{\mcd}}
\newcommand{\TILG}{\tilde{G}}
\newcommand{\EK}{\mce_{(k)}}
\begin{document}

\title{Asymptotically efficient estimation for diffusion processes with nonsynchronous observations}
\author{Teppei Ogihara$^{*,\dagger}$\\
$*$
\begin{small}Graduate School of Information Science and Technology, University of Tokyo, \end{small}\\
\begin{small}7-3-1 Hongo, Bunkyo-ku, Tokyo 113--8656, Japan\end{small}\\
\begin{small}E-mail: ogihara@mist.i.u-tokyo.ac.jp\end{small} \\
$\dagger$
\begin{small}The Institute of Statistical Mathematics\end{small}\\
}
\maketitle


\noindent
{\bf Abstract.}
We study maximum-likelihood-type estimation for diffusion processes when the coefficients are nonrandom and observation occurs in nonsynchronous manner.
The problem of nonsynchronous observations is important when we consider the analysis of high-frequency data in a financial market.
Constructing a quasi-likelihood function to define the estimator, we adaptively estimate the parameter for the diffusion part and the drift part.
We consider the asymptotic theory when the terminal time point $T_n$ and the observation frequency goes to infinity,
and show the consistency and the asymptotic normality of the estimator.
Moreover, we show local asymptotic normality for the statistical model, and asymptotic efficiency of the estimator as a consequence.
To show the asymptotic properties of the maximum-likelihood-type estimator, we need to control the asymptotic behaviors of some functionals of the sampling scheme. 
Though it is difficult to directly control those in general, we study tractable sufficient conditions when the sampling scheme is generated by mixing processes. 

~

\noindent
{\bf Keywords.}
asymptotic efficiency; diffusion processes; local asymptotic normality; maximum-likelihood-type estimation; nonsynchronous observations

\begin{discuss}
{\colorr
Lemma: \ref{}, {} ; \ref{EX-XJ-lemma}, {EX-XJ-lemma} ; \ref{Sn-lemma}, {Sn-lemma} ; \ref{G-est-lemma}, {G-est-lemma} 

Prop: \ref{}, {} ; \ref{A3-suff-prop}, {A3-suff-prop} ;

Eq: \ref{}, {} ; \ref{XJz-est-eq}, {XJz-est-eq} ; \ref{InvS-eq}, {InvS-eq} ; \ref{H1n-eq}, {H1n-eq} ; \ref{Ln-cond}, {Ln-cond} ; \ref{cnhn-cond}, {cnhn-cond} ; \ref{H2n-eq}, {H2n-eq} ; \ref{HSN-conv}, {HSN-conv} ; \ref{drift-consis-eq3}, {drift-consis-eq3} ; 
\ref{mixing-condition}, {mixing-condition} ; \ref{A3-average-cond}, {A3-average-cond} ; \ref{bar-p-prob}, {bar-p-prob} ; 
\ref{uniform-conti-eq}, {uniform-conti-eq} ; \ref{H1n-est-eq1}, {H1n-est-eq1} ;

式番と仮定と命題の依存関係をグラフで書くと修正する時の影響がすぐわかって効率的。式番が変わることも考えてtex内でグラフを書く。\\

期限が決まっているからまず最低限を作り上げる．LANはとりあえずなくても\\
ジャンプなしのMLEとLANもみて，ジャンプある場合もおそらくeffとコメント\\
余裕があったら：
\ITEM{\item 数値でちゃんと推定できることを見る．refresh sampling + Shimizu and Yoshidaと比較か．}
}
\end{discuss}

\begin{discuss}
{\colorr ~\\ \\ ＜＜確認のための重要な数式などのリスト＞＞\\
※式やLemmaのラベルはrefにかえて，Theorem環境も外す．定理番号がセクション毎なら環境は外さなくてもいいか。元の式を更新したら忘れずに更新する．

(\ref{bar-p-prob})式：
\EQ{P(\bar{\rho}_n<1)\to 1}

(\ref{InvS-eq})式：
\EQQ{ \INVS=\tilde{\mcd}^{-1/2}\sum_{p=0}^\infty \MAT{cc}{(\TILG\TILG^\top)^p & -(\TILG\TILG^\top)^p\TILG \\ -(\TILG^\top \TILG)^p\TILG^\top & (\TILG^\top \TILG)^p}\tilde{\mcd}^{-1/2}.
}

(\ref{H1n-eq})式：
\EQQ{H_n^1(\sigma)-H_n^1(\sigma_0)=-\frac{1}{2}\Delta X^c(\INVS-\INVSz)\Delta X^c-\frac{1}{2}\log\frac{\det S_n(\sigma)}{\det S_n(\sigma_0)}+o_p(\sqrt{n}).}

(\ref{Ln-cond})式：
\EQQ{ (n^2/L_n^3)\vee (L_n/n)\to 0.}

(\ref{HSN-conv})式：
\EQQ{ \sqrt{n}(\HSN-\sigma_0)\TOD N(0,\Gamma_1^{-1}).}

(\ref{V-est})式：
\EQQ{ |{\rm Abs}(\Delta V(\theta))|=\sqrt{\sum_i(\Delta_i V(\theta))^2}=\bar{R}_n(\sqrt{n}h_n), }

(\ref{drift-consis-eq3})式：
\EQQ{\bar{X}(\theta)^\top E S_n^{-1}(\HSN)E\bar{X}(\theta) -\Delta X^\top E S_n^{-1}(\HSN)E \Delta X 
= -2\Delta V(\theta)^\top  S_{n,0}^{-1}\Delta X^c-\Delta V(\theta)^\top S_{n,0}^{-1}(2\Delta V(\theta_0) - \Delta V(\theta))+o_p(\sqrt{nh_n}).}

}
\newpage 
\end{discuss}


\section{Introduction}

Given a probability space $(\Omega, \mathcal{F}, P)$ with a right-continuous filtration $\F=\{\mathcal{F}_t\}_{t\geq 0}$,
let $X^{(\alpha)}=\{X_t^{(\alpha)}\}_{t\geq 0}=\{(X^{(\alpha),1}_t,X^{(\alpha),2}_t)\}_{t\geq 0}$ be a two-dimensional $\F$-adapted process satisfying the following stochastic differential equation
\EQ{\label{SDE} dX_t^{(\alpha)}=\mu_t(\theta)dt+b_t(\sigma) dW_t, \quad X_0=x_0,}
where $x_0\in \mbbr^2$,
$\{W_t\}_{0\leq t\leq T}$ is a two-dimensional standard $\F$-Wiener process,
$\{\mu_t(\theta)\}_{t\geq 0}$ and $\{b_t(\sigma)\}_{t\geq 0}$ are deterministic functions with values in $\mathbb{R}^{2}$ and $\mathbb{R}^{2\times 2}$, respectively,
$\alpha=(\sigma,\theta)$, $\sigma \in \Theta_1$, $\theta\in \Theta_2$, and $\Theta_1$ and $\Theta_2$ are bounded open subsets of $\mathbb{R}^{d_1}$ and $\mbbr^{d_2}$, respectively.
Let $\alpha_0=(\sigma_0,\theta_0)\in \Theta_1\times \Theta_2$ be the true value, and let $X_t=(X_t^1,X_t^2)=X_t^{(\alpha_0)}$.
We consider estimation of $\alpha_0$ when $X$ is observed with nonsynchronous manner, that is, observation times of $X^1$ and $X^2$ are different each other.

The problem of nonsynchronous observations appears in the analysis of high-frequency financial data.
If we analyze the intra-day stock price data, we observe stock price when a new transaction or a new order arrived.
Then, the observation times are different for different stocks, and hence, we cannot avoid the problem of nonsynchronous observations.
Statistical analysis with such data is much more complicated compared to the analysis with synchronous data.
Parametric estimation for diffusion processes with synchronous and equidistant observations have been analyzed through quasi-maximum likelihood methods in Florens-Zmirou~\cite{flo89}, Yoshida~\cite{yos92, yos11}, Kessler~\cite{kes97}, and Uchida and Yoshida~\cite{uch-yos12}.
Related to the estimation problem for nonsynchronously observed diffusion processes, estimators for the quadratic covariation have been actively studied. Hayashi and Yoshida~\cite{hay-yos05, hay-yos08, hay-yos11} and Malliavin and Mancino~\cite{mal-man02, mal-man09} have independently constructed consistent estimators under nonsynchronous observations. 
There are also studies of covariation estimation under the simultaneous presence of microstructure noise and nonsynchronous observations (Barndorff-Nielsen et al.~\cite{bar-etal11}, Christensen, Kinnebrock, and Podolskij~\cite{chr-etal10}, Bibinger et al.~\cite{bib-etal14}, and so on). 
For parametric estimation with nonsynchronous observations, Ogihara and Yoshida~\cite{ogi-yos14} have constructed maximum-likelihood-type and Bayes-type estimators and have shown the consistency and the asymptotic mixed normality of the estimators when the terminal time point $T_n$ is fixed and the observation frequency goes to infinity. Ogihara~\cite{ogi15} have shown local asymototic mixed normality for the model in~\cite{ogi-yos14}, and the maximum-likelihood-type and Bayes-type estimators have been shown to be asymptotically efficient.
On the other hand, we need to consider asymptotic theory that the terminal time point $T_n$ goes to infinity to consistently estimate the parameter $\theta$ in the drift term. To the best of the author's knowledge, there are no study of the asymptotic theory of parametric estimation for nonsynchronously observed diffusion processes when $T_n\to\infty$.

In this work, we consider the asymptotic theory for nonsynchronously observed diffusion processes when $T_n\to\infty$, and construct maximum-likelihood-type estimators for the parameter $\sigma$ in the diffusion part and the parameter $\theta$ in the drift part.
We show the consistency and the asymptotic normality of the estimators. Moreover, we show local asymptotic normality of the statistical model, and we obtain asymptotic efficiency of our estimator as a consequence.
Our estimator is constructed based on the quasi-likelihood function that is similarly defined to the one in~\cite{ogi-yos14} though we need some modification to deal with the drift part.
To investigate asymptotic theory for the maximum-likelihood-type estimator, we need to specify the limit of the quasi-likelihood function.
Then, we need to assume some conditions for the asymptotic behavior of the sampling scheme.
In~\cite{ogi-yos14}, for a matrix 
$$G=\bigg\{\frac{(S_i^{n,1}\wedge S_j^{n,2}-S_{j-1}^{n,2} \vee S_{i-1}^{n,1})\vee 0}{|S_i^{n,1}-S_{i-1}^{n,1}|^{1/2}|S_j^{n,2}-S_{j-1}^{n,2}|^{1/2}}\bigg\}_{i,j}$$
generated by the sampling scheme, the existence of the probability limit of $n^{-1}{\rm tr}((GG^\top)^p) \ (p\in\mbbz_+)$ is required,
where $(S_i^{n,l})_i$ is observation times of $X^l$ and $\top$ denotes transpose of a matrix.
Since we consider the different asymptotics, the asymptotic behavior of the quasi-likelihood function is different from that in~\cite{ogi-yos14}.
We also need to consider estimation for the drift parameter $\theta$. Then, we need other assumptions for the asymptotic behavior of the sampling scheme (Assumption (A5)).
Though these conditions for the sampling scheme is difficult to check directly, we study tractable sufficient conditions in Section~\ref{suff-cond-subsection}.

As seen in~\cite{ogi-yos14}, the quasi-likelihood analysis for nonsynchronously observed diffusion processes become much more complicated compared to synchronous observations.
In this work, estimation for the drift parameter $\theta$ is added, and hence, we consider nonrandom drift and diffusion coefficients to avoid overcomplication. 
For general diffusion processes with the random drift and diffusion coefficients, we need to set predictable coefficients to use the matingale theory. However, the quasi-likelihood function loses a Markov property with nonsynchronous observations and the coefficients in the quasi-likelihood function contains randomness of future time.
Then, we need to approximate the coefficients by predictable functions. This operation is particularly complicated. 
Moreover, approximating the true likelihood function by the quasi-likelihood function is much more difficult problem when we show local asymtotic normality and asymptotic efficiency of the estimators.
Therefore, we left asymptotic theory under general random drift and diffusion coefficients as a future work.

The rest of this paper is organized as follows.
In Section~\ref{main-results-section}, we introduce our model settings and the assumptions for main results.
Our estimator is constructed in Section~\ref{setting-subsection}, and the asymptotic normality of the estimator is given in Section~\ref{main-subsection}. Section~\ref{LAN-subsection} deal with local asymptotic normality of our model and asymptotic efficiency of the estimator. Tractable sufficient conditions for the assumptions of the sampling scheme are given in Section~\ref{suff-cond-subsection}.
Section~\ref{proofs-section} contains the proofs of main results. Section~\ref{HSN-consis-subsection} is for the consistency of the estimator for $\sigma$, Section~\ref{HSN-clt-subsection} is for the asymptotic normality of the estimator for $\sigma$, Section~\ref{HTN-consis-subsection} is for the consistency of the estimator for $\theta$, and Section~\ref{HTN-clt-subsection} is for the asymptotic normality of the estimator for $\theta$.
Other proofs are collected in Section~\ref{suff-cond-proof-subsection}.

\section{Main results}\label{main-results-section}

\subsection{Settings}\label{setting-subsection}

For $l\in\{1,2\}$, let the observation times $\{S_i^{n,l}\}_{i=0}^{M_l}$ be strictly increasing random times with respect to $i$, and satisfy $S_0^{n,l}=0$ and $S_{M_l}^{n,l}=nh_n$, where $M_l$ is a random positive integer depending on $n$.
We assume that $\{S_i^{n,l}\}_{0\leq i\leq M_l, l=1,2}$ is independent of $\mathcal{F}_T$ and $\alpha$.
We consider nonsynchronous observations of $X$, that is, we observe $\{S_i^{n,l}\}_{0\leq i\leq M_l, l=1,2}$ and $\{X^l_{S^{n,l}_i}\}_{0\leq i\leq M_l, l=1,2}$.


We denote by $\lVert\cdot \rVert$ the operator norm of a matrix, and by $\top$ the transpose operator for a matrix or a vector.
We often regard a $p$-dimensional vector $v$ as a $p\times 1$ matrix.
For $j\in\mbbn$ and a vector $\kappa=(\kappa_1,\cdots, \kappa_j)$,
we denote $\partial_{\kappa}^k=(\frac{\partial^k}{\partial\kappa_{i_1}\cdots \partial\kappa_{i_k}})_{i_1,\cdots i_k=1}^j$.
For a set $A$ in a topological space, let ${\rm clos}(A)$ denote the closure of $A$.
For a matrix $A$, $[A]_{ij}$ denotes its $(i,j)$ element.
For a vector $v=(v_j)_{j=1}^K$, ${\rm diag}(v)$ denotes a $k\times k$ diagonal matrix with elements $[{\rm diag}(v)]_{jj}=v_j$.

Let $M=M_1+M_2$. For $1\leq i\leq M$, let 
\EQQ{\varphi(i)=\DIV{ll}{i & {\rm if} \ i\leq M_1\\ i-M_1 & {\rm if} \ i>M_1}
\quad \psi(i)=\DIV{ll}{1 & {\rm if} \ i\leq M_1\\ 2 & {\rm if} \ i>M_1}}
For a two-dimensional stochastic process $(U_t)_{t\geq 0}=((U_t^1,U_t^2))_{t\geq 0}$,
let $\Delta_i^l U=U^l_{S^{n,l}_i}-U^l_{S^{n,l}_{i-1}}$ , and let $\Delta^l U=(\Delta_i^l U)_{1\leq i\leq M_l}$ and $\Delta_i U=\Delta_{\varphi(i)}^{\psi(i)} U$ for $1\leq i\leq M$. Let $\Delta U=((\Delta^1 U)^\top, (\Delta^2 U)^\top)^\top$.
Let $|K|=b-a$ for an interval $K=(a,b]$.
Let $I_i^l=(S_{i-1}^{n,l},S_i^{n,l}]$ for $1\leq i\leq M_l$, and let $I_i=I_{\varphi(i)}^{\psi(i)}$ for $1\leq i\leq M$.
We denote a unit matrix of size $k$ by $\mathcal{E}_k$.

Let $\TSIG_i^l(\sigma)=\int_{I_i^l}[b_tb_t^\top(\sigma)]_{ll}dt$ and $\TSIG_{i,j}^{1,2}(\sigma)=\int_{I_i^1\cap I_j^2}[b_tb_t^\top(\sigma)]_{12}dt$.
By setting 
$\TMCD={\rm diag}(\{\TSIG_i\}_{1\leq i\leq M})$,
\EQQ{G=\bigg\{\frac{|I_i^1\cap I_j^2|}{|I_i^1|^{1/2}|I_j^2|^{1/2}}\bigg\}_{1\leq i\leq M_1, 1\leq j\leq M_2},
\quad \rho_{ij}(\sigma)=\frac{\TSIG_{i,j}^{1,2}}{\sqrt{\TSIG_i^1}\sqrt{\TSIG_j^2}}(\sigma),
\quad \TILG(\sigma)=\{\rho_{ij}(\sigma)[G]_{ij}\}_{1\leq i\leq M_1, 1\leq j\leq M_2},}
we can calculate the covariance matrix of $\Delta X$ as 
\EQQ{S_n(\sigma)=\TMCD^{1/2}\MAT{cc}{\mce_{M_1} &  \TILG(\sigma) \\ \TILG^\top(\sigma) & \mce_{M_2}}\TMCD^{1/2}.}

As we will see later, we can ignore the drift term when we consider estimation of $\sigma$ because the drift term converges to zero very fast.
Therefore, we first construct an estimator for $\sigma$, and then construct an estimator for $\theta$.
Such adaptive estimation can speed up the calculation.

We define the quasi-likelihood function $H_n^1(\sigma)$ for $\sigma$ as follows.
\EQNN{H_n^1(\sigma)&=-\frac{1}{2}\Delta X^\top S_n^{-1}(\sigma)\Delta X-\frac{1}{2}\log \det S_n(\sigma).}
Then, the maximum-likelihood-type estimator for $\sigma$ is defined by
\EQQ{\HSN\in {\rm argmax}_{\sigma\in {\rm clos}(\Theta_1)} H_n^1(\sigma).}

We consider estimation for $\theta$ in the next.
Let $V(\theta)=(V_t(\theta))_{t\geq 0}$ be a two-dimensional stochastic process defined by $V_t(\theta)=(\int_0^t\mu^1_s(\theta)^\top ds,\int_0^t\mu^2_s(\theta)^\top ds)^\top$.
Let $\bar{X}(\theta)=\Delta X-\Delta V(\theta)$.
We define the quasi-likelihood function $H_n^2(\theta)$ for $\theta$ as follows.
\EQQ{H_n^2(\theta)=-\frac{1}{2}\bar{X}(\theta)^\top S_n^{-1}(\HSN)\bar{X}(\theta).}
Then, the maximum-likelihood-type estimator for $\theta$ is defined by
\EQQ{\HTN\in {\rm argmax}_{\theta\in {\rm clos}(\Theta_2)} H_n^2(\theta).}

The quasi-(log-)likelihood function $H_n^1$ is defined in the same way as that in~\cite{ogi-yos14}.
Since $\Delta X$ follows normal distribution, we can construct such a Gaussian quasi-likelihood function even for the nonsynchronous data.
When the coefficients are random, though the distribution of $\Delta X$ is not Gaussian, such Gaussian-type quasi-likelihood function is still valid due to the local Gaussian property of diffusion processes.
The Gaussian mean that comes from the drift part is ignored when we construct the quasi-likelihood $H_n^1$. 
When we estimate the parameter $\theta$ for the drift part, we substruct the mean in $\bar{X}(\theta)$ to construct the quasi-likelihood function $H_n^2$. 
Since the effect of the drift term on the estimation of $\sigma$ is small, it works well to estimate $\sigma$ in this way and then plug in $\HTN$ to $S_n$
to construct the estimator for $\theta$. 
Thus, we can speed up the calculation by separating the estimation for $\sigma$ and $\theta$.

\begin{remark}
$H_n^1(\sigma)$ and $H_n^2(\theta)$ are well-defined only if $\det S_n(\sigma)>0$ and $\det S_n(\HSN)>0$, respectively.
For the covariance matrix $S_n$ of nonsynchronous observations $\Delta X$, it is not trivial to check these conditions.
Proposition 1 in Section 2 of~\cite{ogi-yos14} shows that these conditions are satisfied if $b_t(\sigma)$ is continuous on $[0,\infty)\times {\rm clos}(\Theta_1)$ and $\inf_{t,\sigma} \det (b_tb_t^\top(\sigma))>0$. We assume such conditions in our setting (Assumption (A1) in Section 2.2).
\end{remark}

\subsection{Asymptotic normality of the estimator}\label{main-subsection}

In this section, we state the assumptions of our main results, and state the asymtotic normality of the estimator.

For $m\in \mbbn$, an open subset $U\subset \mbbr^m$ is said to admit Sobolev's inequality if for any $p>m$, there exists a positve constant $C$ depending $U$ and $p$ such that $\sup_{x\in U}|u(x)|\leq C\sum_{k=0,1}(\int |\partial_x^k u(x)|^p)^{1/p}$
for any $u\in C^1(U)$. This is the case when $U$ has a Lipschitz boundary.
We assume that $\Theta$, $\Theta_1$, and $\Theta_2$ admit Sobolev's inequality.

Let $\Sigma_t(\sigma)=b_tb_t^\top(\sigma)$, and let
\EQQ{\rho_t(\sigma)=\frac{[\Sigma_t]_{12}}{[\Sigma_t]_{11}^{1/2}[\Sigma_t]_{22}^{1/2}}(\sigma), \quad B_{l,t}(\sigma)=\frac{[\Sigma_t(\sigma_0)]_{ll}}{[\Sigma_t(\sigma)]_{ll}}.}
Let $\rho_{t,0}=\rho_t(\sigma_0)$ and $r_n=\max_{i,l}|I_i^l|$.
Let $\mfs$ be the set of all partitions $(s_k)_{k=0}^\infty$ of $[0,\infty)$ satisfying $\sup_{k\geq 1}|s_k-s_{k-1}|\leq 1$ and $\inf_{k\geq 1}|s_k-s_{k-1}|>0$.
For $(s_k)_{k=0}^\infty\in \mfs$, let $M_{l,k}=\#\{i;\sup I_i^l\in (s_{k-1},s_k]\}$ and $q_n=\max\{k;s_k\leq nh_n\}$,
and let $\EK^l$ be an $M_l\times M_l$ matrix satisfying
$[\EK^l]_{ij}=1$ if $i=j$ and $\sup I_i^l\in (s_{k-1},s_k]$, and otherwise $[\EK^l]_{ij}=0$.
\begin{discuss}
{\colorr 分離を示すときに$(G_kG_k^\top)^p$と$\mce_{(k)}(GG^\top)^p$の誤差評価で等間隔$s_k$だけでなく，any partitionが必要}
\end{discuss}

\begin{description}
\item[Assumption (A1).] There exist positive constants $c_1$ and $c_2$ such that $c_1\mce_2 \leq \Sigma_t(\sigma) \leq c_2\mce_2$ for any $t\in 
[0,\infty)$ and $\sigma \in \Theta_1$. For $k\in\{0,1,2,3,4\}$, $\PT^k\mu_t(\theta)$ and $\PS^kb_t(\sigma)$ exist and are continuous with respect to $(t,\sigma,\theta)$ on $[0,\infty)\times {\rm clos}(\Theta_1)\times {\rm clos}(\Theta_2)$.
For any $\epsilon>0$, there exist $\delta>0$ and $K>0$ such that 
\EQQ{|\PT^k\mu_t(\theta)|+|\PS^kb_t(\sigma)|\leq K, \quad |\PT^k\mu_t(\theta)-\PT^k\mu_s(\theta)|+|\PS^kb_t(\sigma)-\PS^kb_s(\sigma)|\leq \epsilon}
for any $k\in\{0,1,2,3,4\}$, $\sigma \in \Theta_1$, $\theta\in \Theta_2$, and $t,s\geq 0$ satisfying $|t-s|<\delta$.

\begin{discuss}
{\colorr (\ref{bar-p-prob})や$H_n^1,H_n^2$の極限の存在のために$\mu_t,b_t$の$t$に関する一様連続性を仮定する}
\end{discuss}
\item[Assumption (A2).] $r_n\TOP 0$ as $n\to\infty$. 
\item[Assumption (A3).] 
For any $l\in\{1,2\}$, $i_1\in \mbbz_+$, $i_2\in \{0,1\}$, $i_3\in \{0,1,2,3,4\}$, $k_1,k_2\in \{0,1,2\}$ satisfying $k_1+k_2=2$, and any polynomial function $F(x_1,\cdots, x_{14})$ of degree equal to or less than $4$, 
there exist continuous functions $\Phi_{i_1,i_2}^{1,F}(\sigma)$, $\Phi_{l,i_3}^2(\sigma)$ and $\Phi^{3,k_1,k_2}_{i_1,i_3}(\theta)$ on ${\rm clos}(\Theta_1)$ and ${\rm clos}(\Theta_2)$ such that

\EQQ{\frac{1}{T}\int_0^TF((\PS^k B_{l,t}(\sigma))_{0\leq k\leq 4, l=1,2}, (\PS^{k'} \rho_t(\sigma))_{k'=1}^4)\rho_t(\sigma)^{i_1}\rho_{t,0}^{i_2} dt
\to \Phi_{i_1,i_2}^{1,F}(\sigma),}
\EQQ{\frac{1}{T}\int_0^T \PS^{i_3}\log B_{l,t}(\sigma)dt \to \Phi_{l,i_3}^2(\sigma), 
\quad \frac{1}{T}\int_0^T \PT^{i_3}(\phi_{1,t}^{k_1}\phi_{2,t}^{k_2})(\theta)\rho_{t,0}^{i_1}dt \to \Phi^{3,k_1,k_2}_{i_1,i_3}(\theta)
}
as $T\to \infty$ for $\sigma\in{\rm clos}(\Theta_1)$, $\theta\in {\rm clos}(\Theta_2)$,
where $\phi_{l,t}(\theta)=[\Sigma_t(\sigma_0)]_{ll}^{-1/2}(\mu_t^l(\theta)-\mu_t^l(\theta_0))$.
\end{description}
Assumption (A1) and the Ascoli--Arzel\`a theorem yield that the convergences in (A3) can be replaced by uniform convergence with respect to $\sigma$ and $\theta$.
Assumption (A3) is satisfied if $\mu_t(\theta)$ and $b_t(\sigma)$ are independent of $t$, or are periodic functions with respect to $t$ having a common period (when the period does not depend on $\sigma$ nor $\theta$).
\begin{discuss}
{\colorr さらに微分も一様収束することから極限関数が微分できることもわかるか}
\end{discuss}

Let $\mfi_l=(|I_i^l|^{1/2})_{i=1}^{M_l}$.
\begin{description}
\item[Assumption (A4).] 
There exist positive constants $a_0^1$ and $a_0^2$ such that $\{h_nM_{l,q_n+1}\}_{n=1}^\infty$ is $P$-tight and
\EQQ{\max_{1\leq k\leq q_n} |h_nM_{l,k}-a_0^l(s_k-s_{k-1})|\TOP 0}
for $l\in \{1,2\}$ and any partition $(s_k)_{k=0}^\infty \in\mfs$. 
Moreover, for any $p\in \mbbn$, there exists a nonnegative constant $a_p^1$ such that 
\EQQ{\max_{1\leq k\leq q_n}|h_n{\rm tr}(\EK^1(GG^\top)^p)-a_p^1(s_k-s_{k-1})|\TOP 0}
as $n\to\infty$ for any partition $(s_k)_{k=0}^\infty \in\mfs$.
\begin{discuss}
{\colorr Proposition~\ref{H1n-conv-prop}を成り立たせるには(A3)で４階微分まで必要．極限の形が必要なのは二階までで三階以降なくても示せるが，証明がかわるから仮定しておく}
\end{discuss}
\item[Assumption (A5).] For $p\in \mbbz_+$, there exist nonnegative constants $f_p^{1,1}$, $f_p^{1,2}$, and $f_p^{2,2}$ such that 
\EQNN{\max_{1\leq k\leq q_n}|\mfi_1\EK^1(GG^\top)^p\mfi_1-f_p^{1,1}(s_k-s_{k-1})|&\TOP 0, \\
\max_{1\leq k\leq q_n}|\mfi_1\EK^1(GG^\top)^pG\mfi_2-f_p^{1,2}(s_k-s_{k-1})|&\TOP 0, \\
\max_{1\leq k\leq q_n}|\mfi_2\EK^2(G^\top G)^p\mfi_2-f_p^{2,2}(s_k-s_{k-1})|&\TOP 0
}
as $n\to\infty$ for any partition $(s_k)_{k=0}^\infty \in\mfs$.
\end{description}
\begin{discuss}
{\colorr (A5)では$k=q_n+1$のtightnessはすぐ得られる}
\end{discuss}
Assumption (A4) corresponds to [A3$'$] in Ogihara and Yoshida~\cite{ogi-yos14}.
The functionals in (A4) and (A5) appear in $H_n^1$ and $H_n^2$, and hence, we cannot specify the limits of $H_n^1$ and $H_n^2$ unless we assume existence of the limits of these functionals.
It is difficult to directly check (A4) and (A5) for general sampling scheme. We study sufficient conditions for these conditions in Section~\ref{suff-cond-subsection}.


\begin{description}
\item[Assumption (A6).] 
The constant $a_1^1$ in (A4) is positive, and there exist positive constants $c_3$ and $c_4$ such that 
\EQNN{\limsup_{T\to \infty}\bigg(\frac{1}{T}\int_0^T\lVert \Sigma_t(\sigma) -\Sigma_t(\sigma_0)\rVert^2dt\bigg)&\geq c_3|\sigma-\sigma_0|^2, \\
\limsup_{T\to\infty}\bigg(\frac{1}{T}\int_0^T|\mu_t(\theta)-\mu_t(\theta_0)|^2dt\bigg)&\geq c_4|\theta-\theta_0|^2
}
for any $\sigma\in {\rm clos}(\Theta_1)$ and $\theta\in{\rm clos}(\Theta_2)$.
\end{description}
Assumption (A6) is necessary to identify the parameter $\sigma$ and $\theta$ from the data.
If $a_1^1=0$, then we have $a_p^1=0$ for any $p\in\mbbn$. This implies that the non-diagonal components of the covariance matrix $S_n$ are negligible in the limit.
Then, we cannot consistently estimate the parameter in $\rho_t(\sigma)$. This is why we need the assumption $a_1^1>0$ (see Proposition~\ref{mcy1-est-prop} and the following discussion to obtain the consistency).

Let $\mca(\rho)=\sum_{p=1}^\infty a_p^1\rho^{2p}$ for $\rho \in (-1,1)$, and let $\PS^kB_{l,t,0}=\PS^kB_{l,t}(\sigma_0)$.
Let 
\EQQ{\gamma_{1,t}=\mca(\rho_{t,0})\bigg(\frac{\PS \rho_{t,0}}{\rho_{t,0}}-\PS B_{1,t,0}-\PS B_{2,t,0}\bigg)^2-\partial_\rho \mca(\rho_{t,0})\frac{(\PS \rho_{t,0})^2}{\rho_{t,0}}
-2\sum_{l=1}^2(a_0^l+\mca(\rho_{t,0}))(\PS B_{l,t,0})^2,}
and let $\Gamma_1=\lim_{T\to\infty}T^{-1}\int_0^T\gamma_{1,t}dt$, which exists under (A1), (A3) and (A4).
Let
\EQQ{\Gamma_2=\lim_{T\to\infty}\frac{1}{T}\int_0^T\sum_{p=0}^\infty \rho_{t,0}^{2p}\bigg\{\sum_{l=1}^2f_p^{ll}(\PT\phi_{l,t})^2(\theta_0)-2\rho_{t,0}f_p^{12}\PT\phi_{1,t}\PT \phi_{2,t}(\theta_0)\bigg\}dt,}
which exists under (A1), (A3) and (A5).
Let $T_n=nh_n$ and
\EQQ{\Gamma=\MAT{cc}{\Gamma_1 & 0 \\ 0 & \Gamma_2}.}

\begin{theorem}\label{main-thm}
Assume (A1)--(A6). Then $\Gamma$ is positive difinite, and
\EQQ{(\sqrt{n}(\HSN-\sigma_0),\sqrt{T_n}(\HTN-\theta_0))\overset{d}\to N(0,\Gamma^{-1})}
as $n\to\infty$.
\end{theorem}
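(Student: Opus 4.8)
The plan is to treat the two-stage estimator by the standard M-estimation route---consistency, then a Taylor expansion of the score, a martingale central limit theorem for the score at $\sigma_0$ (resp.\ $\theta_0$), and convergence of the normalized observed information to $\Gamma_1$ (resp.\ $\Gamma_2$)---while exploiting the block-diagonal form of $\Gamma$ to decouple the two parameters asymptotically. These four ingredients mirror the four subsections announced in the introduction, and the block-diagonality is what makes the \emph{adaptive} (plug-in) scheme as efficient as joint estimation.

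For $\sigma$, I would first establish consistency of $\HSN$ by showing that $n^{-1}(H_n^1(\sigma)-H_n^1(\sigma_0))$ converges in probability, uniformly on ${\rm clos}(\Theta_1)$, to a limit $\mcy^1(\sigma)$ uniquely maximized at $\sigma_0$; uniqueness is exactly where the first inequality of (A6) (with $c_3>0$) together with $a_1^1>0$ enters. The drift is harmless: by the decomposition (\ref{H1n-eq}) the contribution of $\Delta V$ is $o_p(\sqrt n)$, so replacing $\Delta X$ by its martingale part $\Delta X^c$ changes nothing at the relevant orders. Writing $0=\PS H_n^1(\HSN)$ and expanding around $\sigma_0$ gives $\sqrt n(\HSN-\sigma_0)=-(n^{-1}\PS^2H_n^1(\tilde\sigma_n))^{-1}\,n^{-1/2}\PS H_n^1(\sigma_0)$. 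The score $n^{-1/2}\PS H_n^1(\sigma_0)$ is a centered second-order Gaussian functional of $\Delta X^c$; expanding $\INVS$ by the Neumann series (\ref{InvS-eq}) and applying a martingale CLT, its limit law is $N(0,\Gamma_1)$, the variance being assembled from the trace functionals $a_p^1$ of (A4) and the integrals of (A3). The companion fact $-n^{-1}\PS^2H_n^1\TOP\Gamma_1$ follows from the same expansion, yielding (\ref{HSN-conv}).

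For $\theta$, I would plug $\HSN$ into $H_n^2$ and use (\ref{drift-consis-eq3}) to isolate the drift signal: the leading term $-2\Delta V(\theta)^\top S_{n,0}^{-1}\Delta X^c$ is linear in $\Delta X^c$ and drives a CLT at rate $\sqrt{T_n}$, while the quadratic drift term, normalized by $T_n^{-1}$, produces a criterion uniquely maximized at $\theta_0$ by the second inequality of (A6) (with $c_4>0$); this gives consistency of $\HTN$. Expanding $0=\PT H_n^2(\HTN)$ then gives $\sqrt{T_n}(\HTN-\theta_0)=-(T_n^{-1}\PT^2H_n^2)^{-1}T_n^{-1/2}\PT H_n^2(\theta_0)$, with score variance $\Gamma_2$ built from the sampling functionals $f_p^{1,1},f_p^{1,2},f_p^{2,2}$ of (A5) and the drift quantities $\PT\phi_{l,t}$ of (A3). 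The delicate point is that $H_n^2$ carries $\HSN$ rather than $\sigma_0$; but since $\HSN-\sigma_0=O_p(n^{-1/2})$ and the mixed term $T_n^{-1/2}\PS\PT H_n^2$ has vanishing leading expectation, the plug-in perturbs the $\theta$-score only at a negligible order---precisely the adaptivity afforded by the block-diagonal $\Gamma$.

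Finally, the joint statement follows by assembling $(n^{-1/2}\PS H_n^1(\sigma_0),\,T_n^{-1/2}\PT H_n^2(\theta_0))$ into a single martingale CLT. The off-diagonal block of the limiting covariance vanishes because the $\sigma$-score is a second-chaos (even) functional of $\Delta X^c$ whereas the $\theta$-score is a first-chaos (odd) functional, and these are orthogonal under the locally Gaussian law; this produces the block-diagonal $\Gamma$ and hence asymptotic independence of the two components. Positive definiteness is then immediate from (A6): $\Gamma_1>0$ from $c_3>0$ and $a_1^1>0$, and $\Gamma_2>0$ from $c_4>0$. I expect the main obstacle to be the score CLT through the nonsynchronous covariance structure---controlling the Neumann series (\ref{InvS-eq}) uniformly, which needs $P(\bar\rho_n<1)\to1$ as in (\ref{bar-p-prob}), matching each chaos term to the trace functionals of (A4)--(A5), and verifying the Lyapunov and predictable-bracket conditions of the martingale CLT so that the assembled variance is exactly $\Gamma$.
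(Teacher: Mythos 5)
Your proposal is correct and follows essentially the same route as the paper: consistency of $\HSN$ and $\HTN$ via uniform convergence of the normalized quasi-likelihoods to limits identified through (A6), score CLTs via a block-martingale decomposition and the Hall--Heyde central limit theorem, negligibility of plugging in $\HSN$ through the rate $\HSN-\sigma_0=O_p(n^{-1/2})$, convergence of the observed information with Sobolev-inequality tightness of third derivatives, and block-diagonality of $\Gamma$ from the orthogonality of the quadratic (even) $\sigma$-score increments $\mcx_k$ and the linear (odd) $\theta$-score increments $\dot{\mcx}_k$, i.e.\ $\sum_k E_k[\mcx_k\dot{\mcx}_k]=0$, which is exactly the paper's final step in (\ref{joint-conv-eq}). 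The positive definiteness argument ($\Gamma_1$ from $c_3>0$ together with $a_1^1>0$, $\Gamma_2$ from $c_4>0$) also matches the paper's use of Proposition~\ref{mcy1-est-prop} and (\ref{Y2-iden2}).
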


\subsection{Local asymptotic normality}\label{LAN-subsection}

Next, to discuss the optimality of the estimator, we discuss local asymptotic normality of the statistical model.
In this section, local asymptotic normality of our model is shown, and the maximum-likelihood-type estimator is shown to be asymptotically efficient.

Let $\mbbn$ be the set of all positive integers.
 Let $\alpha_0\in \Theta$, $\Theta \subset \mbbr^d$, and $\{P_{\alpha,n}\}_{\alpha\in\Theta}$ be a family of probability measures defined on a measurable space $(\mathcal{X}_n,\mca_n)$ for $n\in\mbbn$, where $\Theta$ is an open subset of $\mathbb{R}^d$.
As usual we shall refer to $dP_{\alpha_2,n}/dP_{\alpha_1,n}$ the derivative of the absolutely continuous component of the measure $P_{\alpha_2,n}$ with respect to measure $P_{\alpha_1,n}$ at the observation $x$ as the likelihood
ratio.
The following definition of local asymptotic normality is Definition 2.1 in Chapter II of Ibragimov and Has'minski{\u\i}~\cite{ibr-has81}.

\begin{definition}
A family $P_{\alpha,n}$ is called locally asymptotically normal (LAN) at point $\alpha_0\in \Theta$ as $n\to\infty$ if for some nondegenerate $d \times d$ matrix $\epsilon_n$ and any $u\in\mbbr^d$, the representation
\EQQ{\log \frac{dP_{\alpha_0+\epsilon_n u,n}}{dP_{\alpha_0,n}}-(u^\top \Delta_n-|u|^2/2)\to 0}
in $P_{\alpha_0,n}$-probability as $n\to\infty$, where
\EQQ{\mcl(\Delta_n|P_{\alpha_0,n})\to N(0,\mce_d)}
as $n\to\infty$.
\end{definition}

\begin{discuss}
{\colorr 今は係数がdeterministicだから解の存在は言える}
\end{discuss}

Let $\Theta=\Theta_1\times \Theta_2$.
For $\alpha\in\Theta$, let $P_{\alpha,n}$ be the probability measure generated by the observation $\{S_i^{n,l}\}_{i,l}$ and $\{X_{S_i^{n,l}}^{(\alpha),l}\}_{i,l}$.
\begin{theorem}\label{LAN-thm}
Assume (A1)--(A6). Then, $\{P_{\alpha,n}\}_{\alpha,n}$ satisfies the LAN property at $\alpha=\alpha_0$ with
\EQQ{\epsilon_n=\MAT{cc}{n^{-1/2}\Gamma_1^{-1/2} & 0 \\ 0 & T_n^{-1/2}\Gamma_2^{-1/2}}.}
\end{theorem}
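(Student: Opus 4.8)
The plan is to establish the LAN property for $\{P_{\alpha,n}\}$ at $\alpha_0$ by computing the log-likelihood ratio directly and showing it has the required quadratic expansion. Since the observation times are independent of $\mathcal{F}_T$ and $\alpha$, conditionally on the sampling scheme the vector $\Delta X^{(\alpha)}$ is a Gaussian random variable with mean $\Delta V(\theta)$ and covariance $S_n(\sigma)$. Therefore the exact likelihood is genuinely Gaussian, and the log-likelihood ratio $\log (dP_{\alpha_0+\epsilon_n u,n}/dP_{\alpha_0,n})$ is available in closed form, namely
\EQN{&\log \frac{dP_{\alpha_0+\epsilon_n u,n}}{dP_{\alpha_0,n}}
=-\frac{1}{2}\big(\bar{X}(\theta_n)^\top S_n^{-1}(\sigma_n)\bar{X}(\theta_n)-\Delta X^\top S_n^{-1}(\sigma_0)\Delta X\big)\\
&\qquad -\frac{1}{2}\log\frac{\det S_n(\sigma_n)}{\det S_n(\sigma_0)},}
where $\sigma_n=\sigma_0+n^{-1/2}\Gamma_1^{-1/2}u_1$ and $\theta_n=\theta_0+T_n^{-1/2}\Gamma_2^{-1/2}u_2$ with $u=(u_1,u_2)$. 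This is a major simplification relative to the general random-coefficient case (which the authors explicitly postpone), because no approximation of the true likelihood by the quasi-likelihood is needed: the quasi-likelihood \emph{is} the likelihood here.

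First I would decouple the $\sigma$-part and the $\theta$-part of the expansion. For the diffusion parameter, the relevant quantity is the difference $-\frac{1}{2}\Delta X^\top(S_n^{-1}(\sigma_n)-S_n^{-1}(\sigma_0))\Delta X-\frac{1}{2}\log(\det S_n(\sigma_n)/\det S_n(\sigma_0))$, which is exactly the increment $H_n^1(\sigma_n)-H_n^1(\sigma_0)$ studied in the proof of Theorem~\ref{main-thm}. I would Taylor-expand in $\sigma$ about $\sigma_0$ to second order: the first-order term, after scaling by $n^{-1/2}\Gamma_1^{-1/2}$, should be shown to converge to $u_1^\top\Delta_n^{(1)}$ with $\mcl(\Delta_n^{(1)})\to N(0,\mce_{d_1})$, while the second-order term should converge to $-|u_1|^2/2$; these are driven by the spectral expansion (\ref{InvS-eq}) of $S_n^{-1}$ and the trace functionals controlled by (A3) and (A4), and the limiting Fisher information is precisely $\Gamma_1$. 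For the drift parameter, since $\bar{X}(\theta_n)=\Delta X-\Delta V(\theta_n)$ and $\Delta V$ is smooth in $\theta$, the expansion is cleaner: the leading contribution is linear in $\Delta V(\theta_n)-\Delta V(\theta_0)\approx T_n^{-1/2}\PT\Delta V(\theta_0)\Gamma_2^{-1/2}u_2$ contracted against $S_n^{-1}(\sigma_0)\Delta X^c$, yielding $u_2^\top\Delta_n^{(2)}$, and the quadratic form $\Delta V^\top S_n^{-1}\Delta V$ scaled by $T_n^{-1}$ converges to $|u_2|^2$ through the functionals $f_p^{ll}, f_p^{12}$ of (A5), giving the Fisher information $\Gamma_2$.

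Several technical reductions will be needed along the way. I would replace $\Delta X$ by its continuous-martingale part $\Delta X^c$ wherever convenient, using that the drift contribution to $\Delta X$ is $O_p(h_n)$ per increment and hence asymptotically negligible in the cross terms (this is the same ``the drift is small for estimating $\sigma$'' mechanism exploited in the adaptive construction). The off-diagonal block structure of $\Gamma$ — the vanishing of the $\sigma$–$\theta$ cross information — must be verified: the mixed second derivative involves an odd number of factors of $\Delta X^c$ contracted with deterministic-in-$\sigma$ matrices against the drift direction, and its expectation vanishes, so the cross term is $o_p(1)$, which is exactly why $\Delta_n$ splits into independent Gaussian blocks and why $\epsilon_n$ is block-diagonal. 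The joint asymptotic normality of $(\Delta_n^{(1)},\Delta_n^{(2)})$ with the stated identity covariance would follow from a martingale or Gaussian CLT for the relevant quadratic and linear functionals of $\Delta X^c$, reusing the CLT machinery already developed for Theorem~\ref{main-thm} in Sections~\ref{HSN-clt-subsection} and~\ref{HTN-clt-subsection}.

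The hard part will be controlling the remainder in the second-order Taylor expansion of the $\sigma$-dependent quadratic form $\Delta X^\top S_n^{-1}(\sigma)\Delta X$ uniformly over the shrinking neighborhood $\sigma=\sigma_0+n^{-1/2}\Gamma_1^{-1/2}u_1$. Because $S_n$ is a large ($M\times M$, with $M\to\infty$) structured matrix whose inverse is only accessible through the infinite series (\ref{InvS-eq}), bounding the third-order term requires showing that the relevant traces $n^{-1}\operatorname{tr}((\TILG\TILG^\top)^p)$ and their $\sigma$-derivatives stay under control and that the tail of the series in $p$ is summable — this rests on $P(\bar{\rho}_n<1)\to 1$ from (\ref{bar-p-prob}), which guarantees the spectral radius of $\TILG\TILG^\top$ is below one so the Neumann-type expansion converges. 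Establishing that these error terms are genuinely $o_p(1)$, rather than merely bounded, and doing so simultaneously with the analogous tail control for the $\theta$-part functionals in (A5), is where the bulk of the estimation effort will lie; the convergence of $\Delta_n$ and the $-|u|^2/2$ limit are comparatively direct consequences of (A3)–(A5) once the remainder is dispatched.
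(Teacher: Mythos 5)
Your overall route is the paper's own: since the coefficients are deterministic and the sampling scheme is independent of $\mathcal{F}_T$ and $\alpha$, the quasi-likelihood $H_n(\sigma,\theta)=-\frac{1}{2}\bar{X}(\theta)^\top S_n^{-1}(\sigma)\bar{X}(\theta)-\frac{1}{2}\log\det S_n(\sigma)$ coincides (up to $\alpha$-free terms) with the exact log-likelihood, and the paper proves LAN exactly as you outline: a third-order Taylor expansion of $H_n$ at $\alpha_0$ under the scaling $\epsilon_n$, with the linear term handled by the CLT machinery of Proposition~\ref{Hn1-st-conv-prop} and Section~\ref{HTN-clt-subsection}, the quadratic term by Proposition~\ref{H1n-conv-prop} and Section~\ref{HTN-consis-subsection}, and the cubic remainder by the Sobolev/tightness bounds; your remark that the $\sigma$--$\theta$ cross information is $O_p(n^{-1/2})$ is also exactly what makes the block-diagonal $\Gamma$ (and hence block-diagonal $\epsilon_n$) come out.

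There is, however, one concrete error that must be repaired: your closed-form expression for the log-likelihood ratio is false. Under $P_{\alpha_0,n}$ the observation vector has mean $\Delta V(\theta_0)$, so the reference term must be $\bar{X}(\theta_0)^\top S_n^{-1}(\sigma_0)\bar{X}(\theta_0)$, not $\Delta X^\top S_n^{-1}(\sigma_0)\Delta X$. Writing $S_{n,0}=S_n(\sigma_0)$ and using $\bar{X}(\theta_0)=\Delta X^c$, the quantity you displayed exceeds the true log-likelihood ratio by
\EQQ{\Delta V(\theta_0)^\top S_{n,0}^{-1}\Delta X^c+\frac{1}{2}\Delta V(\theta_0)^\top S_{n,0}^{-1}\Delta V(\theta_0),}
and the second summand is of order $T_n$ whenever $\mu_t(\theta_0)$ does not vanish asymptotically (by Lemma~\ref{Sn-lemma} one has $\mcd^{1/2}S_{n,0}^{-1}\mcd^{1/2}\geq C\mce_M$, while $|\mcd^{-1/2}\Delta V(\theta_0)|^2$ grows like $T_n$), so the expression you wrote down diverges and cannot admit a LAN expansion. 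The slip is repairable rather than conceptual: your subsequent treatment of the drift part, which works with the increment $\Delta V(\theta_n)-\Delta V(\theta_0)\approx T_n^{-1/2}\PT\Delta V(\theta_0)\Gamma_2^{-1/2}u_2$ contracted against $S_{n,0}^{-1}\Delta X^c$ and with the quadratic form in that same increment, is precisely what the correctly centered ratio produces. But as written the starting identity of your proof is wrong, and every estimate downstream should be restated from the centered version $-\frac{1}{2}\bigl(\bar{X}(\theta_n)^\top S_n^{-1}(\sigma_n)\bar{X}(\theta_n)-\bar{X}(\theta_0)^\top S_n^{-1}(\sigma_0)\bar{X}(\theta_0)\bigr)-\frac{1}{2}\log\bigl(\det S_n(\sigma_n)/\det S_n(\sigma_0)\bigr)$, which is what the paper's $H_n(\sigma_u,\theta_u)-H_n(\sigma_0,\theta_0)$ denotes.
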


The proof is left to Section~\ref{suff-cond-proof-subsection}.
Theorem 11.2 in Chapter II of Ibragimov and Has'minski{\u\i}~\cite{ibr-has81}
gives lower bounds of estimation errors for any regular estimator of parameters under the LAN property. 
Then, the optimal asymptotic variance of $\epsilon_n^{-1}(T_n-\alpha_0)$ for regular estimator $T_n$ is $\mce_d$.
Therefore, Theorems~\ref{LAN-thm} ensures that our estimator $(\HSN,\HTN)$ is asymptotically efficient in this sense under the assumptions of the theorem
(we can show that $(\HSN,\HTN)$ is regular by the proof of Theorem~\ref{LAN-thm}, (\ref{joint-conv-eq}), (\ref{H1n-eq}), (\ref{HSN-error-eq}), (\ref{Sn-exp}) and Theorem 2 in~\cite{jeg82}).

\subsection{Sufficient conditions for the assumptions}\label{suff-cond-subsection}

It is not easy to directly check Assumptions (A4) and (A5) for general random sampling scheme. In this section, we study tractable sufficient conditions for these assumptions.
The proofs of the results in this section are left to Section~\ref{suff-cond-proof-subsection}.

Let $q>0$ and $\NJ_t=\sum_{i=1}^{M_l}1_{\{S_i^{n,l}\leq t\}}$.
We consider the following conditions for point process $\mcn_t^{n,l}$.
\begin{description}
\item[Assumption (B1-$q$).] \EQQ{\sup_{n\geq 1}\max_{l\in\{1,2\}}\sup_{0\leq t\leq (n-1)h_n}E[(\NJ_{t+h_n}-\NJ_t)^q]<\infty.}
\item[Assumption (B2-$q$).] \EQQ{\limsup_{u\to\infty} \sup_{n\geq 1}\max_{l\in\{1,2\}}\sup_{0\leq t\leq nh_n-uh_n} u^qP(\NJ_{t+uh_n}-\NJ_t=0)<\infty.}
\end{description}

For example, let $(\BN_t^1, \BN_t^2)$ be two independent homogeneous Poisson processes with positive intensities $\lambda_1$ and $\lambda_2$, respectively,
and $\NJ_t=\BN_{h_n^{-1}t}^l$.
Then (B1-$q$) obviously holds for any $q > 0$. Moreover, (B2-q) holds for any $q > 0$ since
\EQQ{\limsup_{u\to\infty} \sup_{n\geq 1}\max_{l\in\{1,2\}}\sup_{0\leq t\leq nh_n-uh_n} u^qP(\NJ_{t+uh_n}-\NJ_t=0)=\lim_{u\to\infty}u^qe^{-(\lambda_1\wedge \lambda_2)u}=0.}

\begin{discuss}
{\colorr $\nu_p([0,1])=O_p(h_n^{-1})$, $a_p=\lim_{n\to\infty}(n^{-1}E[\nu_p([0,T_n))])$.
$a_P=O(1)$だが，$a_p$は$[0,T_n]$を$1/n$して得られるので時間単位とリンクしておらず$\int a_pdt$は変．→$a_p$を$t$に依存させない．
$\nu_p([0,t))$を各$t$に対して評価する必要はないから問題ない．
}
\end{discuss}

To give sufficient conditions for (A4) and (A5), we consider mixing properties of $\mcn^{n,l}$.
That is, we assume condtions for the following mixing coefficient $\alpha_k^n$.
Let 
\EQQ{\mcg_{i,j}^n=\sigma(\NJ_t-\NJ_s; ih_n\leq s<t\leq jh_n, l=1,2) \quad (0\leq i,j\leq n),}
and let 
\EQQ{\alpha_k^n=0\vee \sup_{1\leq i,j\leq n-1, j-i\geq k}\sup_{A\in \mcg_{0,i}^n}\sup_{B\in \mcg_{j,n}^n}|P(A\cap B)-P(A)P(B)|.}

\begin{proposition}\label{A3-suff-prop}
Assume that (B1-$q$) and (B2-$q$) hold
and that 
\EQ{\label{mixing-condition} \sup_{n\in\mbbn} \sum_{k=0}^\infty (k+1)^q\alpha_k^n<\infty}
for any $q>0$.
Moreover, assume that there exist positive constants $a_0^1$ and $a_0^2$, and a nonnegative constant $a_p^1$ for $p\in \mbbn$ such that
\EQN{\label{A3-average-cond} \max_{1\leq k\leq q_n}|h_nE[M_{l,k}]-a_0^l(s_k-s_{k-1})|&\to 0, \\
\max_{1\leq k\leq q_n}|h_nE[{\rm tr}(\EK^1(GG^\top)^p)]-a_p^1(s_k-s_{k-1})|&\to 0
}
as $n\to\infty$ for $p\in\mbbz_+$, $l\in\{1,2\}$ and any partition $(s_k)_{k=0}^\infty\in\mfs$.
Then, (A4) holds.
\end{proposition}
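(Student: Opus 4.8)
The plan is to upgrade the convergence of the means in (\ref{A3-average-cond}) to convergence in probability of the random functionals in (A4) by a moment-bound and union-bound argument, exploiting that (B1-$q$), (B2-$q$) and (\ref{mixing-condition}) are assumed for \emph{every} $q>0$. First I would reduce to centered quantities. Since $M_{l,k}=\NJ_{s_k}-\NJ_{s_{k-1}}$ is the number of observation times of $X^l$ in $(s_{k-1},s_k]$, and since (\ref{A3-average-cond}) gives $\max_{k\le q_n}|h_nE[M_{l,k}]-a_0^l(s_k-s_{k-1})|\to0$ together with the analogous statement for the trace, it suffices to prove
\[
\max_{1\le k\le q_n}\big|h_nM_{l,k}-h_nE[M_{l,k}]\big|\TOP0, \qquad \max_{1\le k\le q_n}\big|h_n{\rm tr}(\EK^1(GG^\top)^p)-h_nE[{\rm tr}(\EK^1(GG^\top)^p)]\big|\TOP0 .
\]
The $P$-tightness of $\{h_nM_{l,q_n+1}\}_n$ required in (A4) is immediate from nonnegativity and the first-moment bound $E[h_nM_{l,q_n+1}]=O(1)$, which follows from (B1-$q$) by splitting the length-$(\le 1)$ boundary window into $O(h_n^{-1})$ windows of length $h_n$.

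For the two maximal bounds I would use a Rosenthal-type inequality for mixing sequences. Partitioning each window $(s_{k-1},s_k]$ into its $O(h_n^{-1})$ subintervals of length $h_n$, I write $M_{l,k}$ (respectively ${\rm tr}(\EK^1(GG^\top)^p)$) as a sum of that many local, nonnegative, uniformly $L^r$-bounded terms; (B1-$q$) supplies the moment bounds and (\ref{mixing-condition}) controls the dependence of terms separated in time. The Rosenthal bound then yields, for every even $r$ and uniformly in $k$,
\[
E\big[|h_nM_{l,k}-h_nE[M_{l,k}]|^r\big]\le C_r\,h_n^{r/2}, \qquad E\big[|h_n{\rm tr}(\EK^1(GG^\top)^p)-h_nE[{\rm tr}(\EK^1(GG^\top)^p)]|^r\big]\le C_{r,p}\,h_n^{r/2}.
\]
Combining Markov's inequality with a union bound over the $q_n\le CT_n$ windows gives, for any $\delta>0$, a bound of order $\delta^{-r}q_nh_n^{r/2}=O(\delta^{-r}nh_n^{1+r/2})$; since (B1-$q$), (B2-$q$) and (\ref{mixing-condition}) hold for all $q$, I may take $r$ as large as needed so that, under the standing relation between $n$ and $h_n$, $nh_n^{1+r/2}\to0$. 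This yields both maximal convergences.

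The main obstacle is the trace term, because $[(GG^\top)^p]_{ii}$ couples the two asynchronous grids: it is a sum over alternating cycles $i\to j_1\to i_1\to\cdots\to i$ of products of $2p$ entries $[G]_{ij}=|I_i^1\cap I_j^2|/(|I_i^1|^{1/2}|I_j^2|^{1/2})\in[0,1]$. I would first record the elementary bounds $[GG^\top]_{ii}=\sum_j[G]_{ij}^2\le1$ and $\lVert GG^\top\rVert\le1$ (the latter because $G$ is the cross-Gram matrix of the two orthonormal systems $|I_i^1|^{-1/2}1_{I_i^1}$ and $|I_j^2|^{-1/2}1_{I_j^2}$), so that $0\le[(GG^\top)^p]_{ii}\le1$ and hence $0\le{\rm tr}(\EK^1(GG^\top)^p)\le M_{1,k}$, making the functional $O(1)$ after multiplication by $h_n$. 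The crucial step is \emph{locality}: since $[G]_{ij}\ne0$ only when $I_i^1$ and $I_j^2$ overlap, each cycle stays within a time-neighborhood of $S_i^{n,1}$ whose width is controlled by the interval lengths, which vanish by (A2) and cannot be large by (B2-$q$). Consequently $[(GG^\top)^p]_{ii}$ is, off a negligible event, a bounded functional of $\NJ$ near $S_i^{n,1}$, the number of overlapping intervals has bounded moments of all orders by (B1-$q$), and the point-process mixing coefficients $\alpha_k^n$ transfer to the resulting summands with the gap index inflated only by the (vanishing) reach of the cycles. With these properties the Rosenthal machinery of the previous paragraph applies to ${\rm tr}(\EK^1(GG^\top)^p)=\sum_{i:\,\sup I_i^1\in(s_{k-1},s_k]}[(GG^\top)^p]_{ii}$ exactly as for $M_{l,k}$, and (A4) follows.
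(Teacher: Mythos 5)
Your proposal is correct and takes essentially the same route as the paper's own proof: the paper likewise reduces to centered quantities (tightness of $h_nM_{l,q_n+1}$ from (B1-$1$)), splits each window into $O(h_n^{-1})$ blocks of length $\sim h_n$, localizes the trace functional via truncation events $A^p_{k,b_n^{\delta'}}$ controlled by (B2-$q$), invokes the Rosenthal-type mixing moment bound (eq.\ (31) in the proof of Proposition 6 of \cite{ogi-yos14}) to get $E\big[\big|h_n{\rm tr}(\EK(GG^\top)^p)-E[h_n{\rm tr}(\EK(GG^\top)^p)]\big|^q\big]\leq C(p+1)^{q-1}h_n^{q\eta}$, and finishes with a union bound over the $q_n=O(nh_n)$ windows after choosing $q$ so large that $nh_n^{1+q\eta}\to 0$. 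The only difference is presentational: the paper imports the localization and moment-inequality machinery by citation to \cite{ogi-yos14}, whereas you reconstruct those two steps explicitly.
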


In the following, let $(\BN_t^l)_{t\geq 0}$ be an exponential $\alpha$-mixing point process for $l\in \{1,2\}$.
Assume that the distribution of $(\BN_{t+t_k}^l-\BN_{t+t_{k-1}}^l)_{1\leq k\leq K, l=1,2}$ does not depend on $t\geq 0$ for any $K\in\mbbn$ and $0\leq t_0<t_1<\cdots < t_K$.

\begin{proposition}\label{A5-suff-cond-prop}
Assume that (B1-$q$) and (B2-$q$) hold
and that (\ref{mixing-condition}) is satisfied for any $q>0$. 
Moreover, assume that there exist nonnegative constants $f_p^{1,1}$, $f_p^{1,2}$, and $f_p^{2,2}$ for $p\in \mbbz_+$ such that
\EQN{\label{A5-average-cond} \max_{1\leq k\leq q_n}|E[\mfi_1\EK^1(GG^\top)^p\mfi_1]-f_p^{1,1}(s_k-s_{k-1})|&\to 0, \\
\max_{1\leq k\leq q_n}|E[\mfi_1\EK^1(GG^\top)^pG\mfi_2]-f_p^{1,2}(s_k-s_{k-1})|&\to 0, \\
\max_{1\leq k\leq q_n}|E[\mfi_2\EK^2(G^\top G)^p\mfi_2]-f_p^{2,2}(s_k-s_{k-1})|&\to 0
}
as $n\to\infty$ for $p\in\mbbz_+$ and any partition $(s_k)_{k=0}^\infty\in\mfs$. Then, (A5) holds.
\end{proposition}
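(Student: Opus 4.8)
The plan is to deduce the convergence-in-probability statements of (A5) from the convergence of the means in (\ref{A5-average-cond}), in exactly the way the proof of Proposition~\ref{A3-suff-prop} deduces (A4) from its mean hypotheses; the three functionals $\mfi_1\EK^1(GG^\top)^p\mfi_1$, $\mfi_1\EK^1(GG^\top)^pG\mfi_2$ and $\mfi_2\EK^2(G^\top G)^p\mfi_2$ play here the role played there by $h_nM_{l,k}$ and $h_n{\rm tr}(\EK^1(GG^\top)^p)$. Since the deterministic means already converge to $f_p^{\cdot,\cdot}(s_k-s_{k-1})$ uniformly in $k$, it suffices to prove that each functional concentrates around its own mean uniformly over the $q_n$ blocks, that is, $\max_{1\le k\le q_n}|\mfi_1\EK^1(GG^\top)^p\mfi_1-E[\mfi_1\EK^1(GG^\top)^p\mfi_1]|\TOP 0$, and similarly for the other two. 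I would treat the three cases in parallel, as they differ only in which grid the weight vectors $\mfi_1,\mfi_2$ and the trailing factor $G$ sit on.

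The first ingredient is the approximate locality of $G$. Because $[G]_{ij}=|I_i^1\cap I_j^2|/(|I_i^1|^{1/2}|I_j^2|^{1/2})$ vanishes unless $I_i^1$ and $I_j^2$ overlap, an entry $[(GG^\top)^p]_{ii'}$ can be nonzero only when the endpoints of $I_i^1$ and $I_{i'}^1$ lie within a time distance of order $p\,r_n$. Combined with (A2) ($r_n\TOP0$), this shows that, up to a boundary error supported on the $O(p\,r_n/h_n)$ intervals straddling $s_{k-1}$ and $s_k$ and hence of total size $O(p\,r_n)\TOP0$ after summation, the block functional $\mfi_1\EK^1(GG^\top)^p\mfi_1$ is a functional of the sampling scheme in a bounded time neighbourhood of $(s_{k-1},s_k]$. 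This localisation makes blocks with distant indices nearly independent under the mixing hypothesis.

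The core step is then a uniform-in-$k$ concentration estimate. I would write the centred block functional as a sum over the $\sim h_n^{-1}$ indices $i$ with $\sup I_i^1\in(s_{k-1},s_k]$ of local, centred contributions, each carrying the weight $|I_i^1|^{1/2}$ and hence of order $h_n^{1/2}$ times a bounded row of $(GG^\top)^p$. Using the moment bounds (B1-$q$), (B2-$q$) to control the number of points in any window and the size of each contribution, and the mixing decay (\ref{mixing-condition}) to decorrelate contributions separated in time, I would derive a Rosenthal-type moment inequality showing that $E[|\,\mfi_1\EK^1(GG^\top)^p\mfi_1-E[\cdots]\,|^{2m}]$ is small uniformly in $k$: the weights being $O(h_n)$ and the effective number of independent sub-windows inside a unit interval being $O(r_n^{-1})$, the centred sum has variance of order $r_n$ and, more generally, arbitrarily small high moments. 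A union bound $P(\max_k|\cdots|>\epsilon)\le\epsilon^{-2m}\sum_{k=1}^{q_n}E[|\cdots|^{2m}]$ then forces the maximum to $0$ in probability once $m$ is taken large enough, and the freedom to take $m$ arbitrarily large is precisely why (B1-$q$), (B2-$q$) and (\ref{mixing-condition}) are assumed for every $q>0$.

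The step I expect to be the main obstacle is exactly this moment estimate for the centred bilinear forms: because $(GG^\top)^p$ is a nonlinear and not-quite-local function of the random sampling times, its fluctuations cannot be read off termwise, and one must combine the band structure with the mixing inequalities while keeping all bounds uniform over the $q_n$ blocks and over the partition $(s_k)\in\mfs$. The cross-grid functional $\mfi_1\EK^1(GG^\top)^pG\mfi_2$ is slightly more delicate, since the trailing $G$ couples the two grids and the two weight vectors live on different grids, but it is handled by the same decomposition with one extra overlap factor; the boundary terms at $s_{k-1}$ and $s_k$ must likewise be shown negligible uniformly, which again rests on (A2).
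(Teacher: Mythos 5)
Your proposal is correct and follows essentially the same route as the paper: the paper also reduces (A5) to a uniform-in-$k$ concentration of the centred block functionals and obtains it by adapting the machinery of Proposition~\ref{A3-suff-prop} (i.e.\ the proof of Proposition 6 in~\cite{ogi-yos14}), namely splitting each $(s_{j-1},s_j]$ into $O(h_n^{-1})$ sub-blocks, truncating on events where enough observations exist (this is where (B2-$q$) enters), and deriving a mixing-based moment bound $E[|\cdot-E[\cdot]|^q]\leq C_q(p+1)^{q-1}h_n^{q\eta}$ valid for arbitrarily large $q$, which is then summed over the blocks exactly as in your union bound. The only cosmetic difference is that the paper imports this moment estimate by reference rather than re-deriving the Rosenthal-type inequality from the band structure of $(GG^\top)^p$ as you sketch.
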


\begin{proposition}\label{a11-suff-cond-prop}
Assume that there exists $q>0$ such that (A4) and (B2-$q$) hold,
$\{\mcn_{t+h_n}^{n,l}-\mcn_t^{n,l}\}_{0\leq t\leq T_n-h_n, l\in \{1,2\}, n\in\mbbn}$ is $P$-tight,
and $\sum_{k=1}^\infty k\alpha_k^n<\infty$. Then, $a_1^1>0$.
\end{proposition}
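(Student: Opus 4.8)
The plan is to prove $a_1^1>0$ by contradiction. Since (A4) already guarantees $a_1^1\ge0$, it suffices to exclude $a_1^1=0$. Apply (A4) with the unit partition $s_k=k$ and read off its first block: with $\mce_{(1)}^1$ the matrix restricting to the indices $i$ with $\sup I_i^1\in(0,1]$, this gives $Y_n:=h_n\,\mathrm{tr}(\mce_{(1)}^1 GG^\top)\TOP a_1^1$. Thus if $a_1^1=0$ then $P(Y_n\ge\epsilon)\to0$ for every $\epsilon>0$, and I will contradict this by producing a fixed $c>0$ with $\liminf_n P(Y_n\ge c)>0$.

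First I would convert $Y_n$ into an occupation integral. For $s\ge0$ let $I^l(s)$ denote the $X^l$-interval $I_i^l$ with $s\in I_i^l$. Expanding $|I_i^1\cap I_j^2|^2$ as a double integral and integrating out one variable yields $[GG^\top]_{ii}=\int_{I_i^1}\frac{|I^1(s)\cap I^2(s)|}{|I^1(s)|\,|I^2(s)|}\,ds$, and summing over $\{i:\sup I_i^1\in(0,1]\}$ gives $Y_n=h_n\int_0^{S^*}\frac{|I^1(s)\cap I^2(s)|}{|I^1(s)|\,|I^2(s)|}\,ds$, where $S^*:=\max\{S_i^{n,1}:S_i^{n,1}\le1\}$. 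This representation is the heart of the matter: on each gap of the superposed process $\mcn^{n,1}+\mcn^{n,2}$ the integrand is constant and strictly positive, so no cancellation can occur.

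Next I would truncate to make the functional bounded. Fix $K>0$ and set $\tilde Y_n:=h_n^{-1}\int_0^{S^*}|I^1(s)\cap I^2(s)|\,1_{\{|I^1(s)|\vee|I^2(s)|\le Kh_n\}}\,ds$. On the truncation event the denominator is at most $(Kh_n)^2$, so $Y_n\ge K^{-2}\tilde Y_n$, while on the same event $|I^1(s)\cap I^2(s)|\le Kh_n$ forces $0\le\tilde Y_n\le K$. The stationarity of $(\BN^1,\BN^2)$ assumed in this subsection now lets me evaluate $E[\tilde Y_n]$: passing to the scale $\tau=s/h_n$ and using shift invariance of the law of the pair of origin-intervals $(\bar I^1(\tau),\bar I^2(\tau))$, the integrand expectation at each $s$ equals $h_n\,m_K$, where $m_K:=E\big[|\bar I^1(0)\cap\bar I^2(0)|\,1_{\{|\bar I^1(0)|\vee|\bar I^2(0)|\le K\}}\big]$, so that (up to endpoint effects) $E[\tilde Y_n]=m_K+o(1)$. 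Because $a_0^l>0$ and finite in (A4) forces $\BN^l$ to have finite positive intensity, the origin-intervals are a.s.\ finite and their intersection has a.s.\ positive length; hence $m_K\uparrow E[|\bar I^1(0)\cap\bar I^2(0)|]>0$, and $m_K>0$ once $K$ is large.

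Finally, the reverse Markov inequality for the $[0,K]$-valued variable $\tilde Y_n$ gives $P(\tilde Y_n\ge m_K/2)\ge (E[\tilde Y_n]-m_K/2)/K\ge m_K/(4K)$ for all large $n$, whence $P(Y_n\ge m_K/(2K^2))\ge m_K/(4K)>0$, the required contradiction, and $a_1^1>0$ follows. I expect the main obstacle to be the bookkeeping at the endpoints (chiefly the right endpoint $S^*$): I must show that the piece of the integral over $(S^*,1]$ is asymptotically negligible, so that the identity $E[\tilde Y_n]=m_K+o(1)$ asserted above genuinely holds. This is exactly where (B2-$q$) and the tightness of the increments enter: (B2-$q$) bounds $P(1-S^*>uh_n)$ by $O(u^{-q})$, giving $1-S^*\TOP0$ (here $h_n\to0$), after which the uniform bound $\tilde Y_n\le K$ and bounded convergence control the boundary contribution, while the tightness of $\{\mcn^{n,l}_{t+h_n}-\mcn^{n,l}_t\}$ together with $\sum_k k\alpha_k^n<\infty$ is what makes the passage from the random occupation integral to the stationary mean $m_K$ rigorous and identifies the probabilistic limit in (A4) with that mean.
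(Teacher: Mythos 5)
Your reduction is correct as far as it goes: the identity $[GG^\top]_{ii}=\int_{I_i^1}|I^1(s)\cap I^2(s)|\,(|I^1(s)|\,|I^2(s)|)^{-1}ds$ is just an integral form of the representation ${\rm tr}(\EK^1 GG^\top)=\sum_{i,j}|I_i^1\cap I_j^2|^2/(|I_i^1|\,|I_j^2|)$ that the paper's own proof starts from, and the contradiction scheme (convergence in probability in (A4) plus a bound $\liminf_n P(Y_n\ge c)>0$ forces $a_1^1\ge c$) is sound. The genuine gap is the step $E[\tilde Y_n]=m_K+o(1)$. This rests on shift-invariance of the law of the pair of intervals containing a given point, i.e.\ on the identification $\mcn^{n,l}_t=\BN^l_{h_n^{-1}t}$ with $(\BN^1,\BN^2)$ jointly stationary. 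Proposition~\ref{a11-suff-cond-prop} does not assume this: it is stated for a general sampling scheme $\mcn^{n,l}$ satisfying (A4), (B2-$q$), tightness of the $h_n$-increments, and $\sum_k k\alpha_k^n<\infty$; the stationary processes $\BN^l$ are tied to $\mcn^{n,l}$ only in Lemma~\ref{A4A5-suff-cond-prop} and in the final Corollary. For a general, non-stationary scheme your quantity $m_K$ is simply undefined, and nothing in your argument replaces it. The telltale sign is that the two hypotheses which are supposed to do that job --- increment tightness and the summable mixing coefficients --- are never genuinely used in your proof; they appear only in a closing sentence asserting that they ``make the passage to the stationary mean rigorous,'' which is exactly the point at issue.

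What the paper does instead is pathwise and needs no stationarity: by (B2-$q$), with probability close to one every interval meeting a fixed block has length at most $Nh_n$ for a suitable constant $N$, so every denominator $|I_i^1|\,|I_j^2|$ is at most $N^2h_n^2$; by the tightness hypothesis the number of nonvanishing intersection terms in the block is $O_p(h_n^{-1})$; and since the intersection lengths $|I_i^1\cap I_j^2|$ sum to (essentially) the block length $R$, the elementary inequality $x_1^2+\cdots+x_m^2\ge R^2/m$ bounds the numerator sum below by $R^2h_n/C$. Hence $h_n{\rm tr}(\EK^1GG^\top)\ge R^2/(CN^2)>0$ on an event whose probability stays bounded away from zero (this is where the mixing condition is used, following the argument of Proposition 9 in \cite{ogi-yos14}), and (A4) then forces $a_1^1>0$. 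If you add the identification $\mcn^{n,l}_t=\BN^l_{h_n^{-1}t}$ --- which is all the Corollary requires --- your argument becomes essentially correct (the endpoint term is harmless, since $0\le 1-S^*\le 1$ and $1-S^*\TOP 0$ give $E[1-S^*]\to 0$ by bounded convergence), but it then proves only that special case, not the proposition as stated. To repair it in general, replace the appeal to a stationary mean by the counting-plus-Cauchy--Schwarz argument above.
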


\begin{lemma}\label{A4A5-suff-cond-prop}
Let $\NJ_t=\BN_{h_n^{-1}t}^l$ for $0\leq t\leq nh_n$ and $l\in\{1,2\}$. Then, (\ref{mixing-condition}) is satisfied for any $q>2$, and there exist constants $a_0^1$, $a_0^2$, and $a_p^1=a_p^2$ for $p\in \mbbn$ such that (\ref{A3-average-cond}) holds true.
Moreover, there exist nonnegative constants $f_p^{1,1}$, $f_p^{1,2}$, and $f_p^{2,2}$ for $p\in\mbbz_+$ such that (\ref{A5-average-cond}) holds.
\end{lemma}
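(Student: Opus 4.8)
The plan is to exploit two structural facts about the setting $\NJ_t=\BN^l_{h_n^{-1}t}$: the matrix $G$ is invariant under time rescaling, and the increments of $\BN=(\BN^1,\BN^2)$ are stationary. Together these reduce each functional appearing in (\ref{A3-average-cond}) and (\ref{A5-average-cond}) to a quantity that is \emph{exactly} linear in the length of the underlying time window, so the convergences hold with zero error apart from harmless boundary contributions, while the mixing bound (\ref{mixing-condition}) follows directly from exponential mixing.

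First I would record the scaling. Writing $\tau_i^l$ for the jump times of $\BN^l$, we have $S_i^{n,l}=h_n\tau_i^l$, so each interval satisfies $I_i^l=h_n\tilde I_i^l$ with $\tilde I_i^l=(\tau_{i-1}^l,\tau_i^l]$. Since $[G]_{ij}=|I_i^1\cap I_j^2|/(|I_i^1|^{1/2}|I_j^2|^{1/2})$, the common factor $h_n$ cancels and $G$ equals the matrix $\tilde G$ built from $\BN$; consequently $(GG^\top)^p$ is a functional of $\BN$ alone, $\EK^1$ selects the type-$1$ points of $\BN$ lying in $(h_n^{-1}s_{k-1},h_n^{-1}s_k]$, a window of $\BN$-length $L_k:=h_n^{-1}(s_k-s_{k-1})$, and $\mfi_l=h_n^{1/2}\tilde{\mfi}_l$. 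Bookkeeping these $h_n$ factors, the prefactor $h_n$ in (\ref{A3-average-cond}) and the absence of a prefactor in (\ref{A5-average-cond}) both combine with $L_k=h_n^{-1}(s_k-s_{k-1})$ to leave exactly $(s_k-s_{k-1})$ times a rate depending only on $\BN$.

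For the mixing condition, the same time change makes $\mcg_{i,j}^n$ the $\sigma$-field generated by the increments of $\BN$ on $[i,j]$, so $\alpha_k^n$ is dominated, uniformly in $n$, by the $\alpha$-mixing coefficient of $\BN$ at time-gap $k$; exponential mixing gives a bound $Ce^{-ck}$, hence $\sup_n\sum_{k=0}^\infty(k+1)^q\alpha_k^n\le C\sum_{k=0}^\infty(k+1)^qe^{-ck}<\infty$ for every $q>0$, which is (\ref{mixing-condition}) (indeed more than the stated $q>2$). The linearity then comes from stationarity: all quantities in question are built from the entrywise nonnegative matrix $G$ and the nonnegative vectors $\mfi_l$, so each is a nonnegative, shift-covariant functional attached to the points of $\BN$ in the window. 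Writing, for instance, $\phi_p(L)=E[\sum_{i:\,\tau_i^1\in(0,L]}[(\tilde G\tilde G^\top)^p]_{ii}]$ and shifting the window by stationarity gives $\phi_p(L_1+L_2)=\phi_p(L_1)+\phi_p(L_2)$; a nonnegative additive function is linear, so $\phi_p(L)=a_p^1L$. The identical argument applied to the count $M_{l,k}$ yields $a_0^l=E[\BN^l_1]>0$, and applied to the three forms of (\ref{A5-average-cond}) yields $f_p^{1,1},f_p^{1,2},f_p^{2,2}$. Multiplying by the appropriate power of $h_n$ converts $L_k$ into $s_k-s_{k-1}$, giving the claimed linear expressions exactly; the only discrepancy is caused by the forced endpoints $S_0^{n,l}=0$ and $S_{M_l}^{n,l}=nh_n$, which modify $O(1)$ intervals near $t=0$ and $t=nh_n$, perturbing the functionals only in the two extreme blocks by $O(1)$, i.e. by $O(h_n)$ after the scaling, which is $o(1)$ uniformly in $k$. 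Finally $a_p^1=a_p^2$ follows from ${\rm tr}((GG^\top)^p)={\rm tr}((G^\top G)^p)$: summing the block traces over $k$ recovers the full trace on each side, so the two linear rates over $[0,nh_n]$ must coincide.

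The one genuinely technical step, and the main obstacle, is the finiteness of the rates, i.e. $\phi_p(1)<\infty$ and the boundedness of its $\mfi$-weighted analogues. Here I would use $0\le[G]_{ij}\le1$ (from $|I_i^1\cap I_j^2|\le|I_i^1|^{1/2}|I_j^2|^{1/2}$) and the fact that $[G]_{ij}\ne0$ only when $I_i^1\cap I_j^2\ne\emptyset$, so that expanding $(GG^\top)^p$ into alternating walks between overlapping intervals keeps each walk within a time span of order $p$ around the base point. The summed contribution over the points in a unit window is then dominated by products of the numbers of observation points in nearby windows, whose moments are controlled by (B1-$q$) (via H\"older) while (B2-$q$) rules out the anomalously long intervals that would otherwise let a walk escape. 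This combinatorial-cum-moment estimate is the heaviest part; once it delivers finiteness, the required convergences follow immediately from the exact linearity established above.
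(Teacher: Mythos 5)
Your scaling observation (the factor $h_n$ cancels in $G$, so everything reduces to functionals of $\BN$), your treatment of (\ref{mixing-condition}) via the exponential bound $\alpha_k^n\le c_1e^{-c_2k}$, the identification $a_0^l=E[\BN_1^l-\BN_0^l]$ for the counts, and the trace identity behind $a_p^1=a_p^2$ all agree with the paper. The gap is in your central step: the exact additivity $\phi_p(L_1+L_2)=\phi_p(L_1)+\phi_p(L_2)$, and hence the exact linearity $\phi_p(L)=a_p^1L$, is false. The summand $[(\tilde G\tilde G^\top)^p]_{ii}$ is \emph{not} a functional of the points of $\BN$ in (or within bounded distance of) the window containing $\tau_i^1$: expanding it into alternating walks through overlapping intervals confines the walk to within $2p+1$ \emph{intervals} of the base point, but interval lengths are unbounded random variables, so the walk can reach arbitrarily far in time --- your phrase ``keeps each walk within a time span of order $p$ around the base point'' is precisely what fails. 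Consequently stationarity of increments does not map the functional attached to $(L_1,L_1+L_2]$ onto the one attached to $(0,L_2]$: the latter is constrained by the time origin (where $\tau_0^l=0$ is forced), while the former sees an unconstrained configuration to its left. So $E[\mfg_k^p]$ genuinely depends on $k$; the discrepancy is \emph{not} a boundary effect confined to $O(1)$ intervals at $t=0$ and $t=nh_n$, but is present in every block and must be estimated.

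This is exactly what the paper's proof supplies and your proposal lacks: introduce the localization events $\bar{A}_{k,r}^p$ (each of the $2p+1$ adjacent sub-windows of length $r$ on either side of the $k$-th window contains at least one point of each $\BN^l$), on which every relevant walk is confined to a region of length of order $rp$, so that the truncated functional $\mfg_k^p1_{\bar{A}_{k,r}^p}$ \emph{is} an increment functional and $E[\mfg_k^p1_{\bar{A}_{k,r}^p}]=E[\mfg_{k'}^p1_{\bar{A}_{k',r}^p}]$ for $k\wedge k'\ge rp+1$ by stationarity; bound $P((\bar{A}_{k,r}^p)^c)\le C(p+1)r^{-q}$ using (B2-$q$), and bound the truncation error by Cauchy--Schwarz using second moments of the counts; conclude that $(E[\mfg_k^p])_k$ is a \emph{Cauchy sequence} and \emph{define} $a_p^1$ as its limit. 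The convergence (\ref{A3-average-cond}) then follows because $h_nE[{\rm tr}(\EK^1(GG^\top)^p)]$ is, up to $O(h_n)$, a Ces\`aro average of $E[\mfg_k^{n,p}]$ over roughly $h_n^{-1}(s_k-s_{k-1})\to\infty$ indices, and such averages inherit the limit; the same scheme handles the quadratic forms in (\ref{A5-average-cond}). Finally, what you single out as ``the main obstacle'' (finiteness of the rates) is in fact the easy part: since $\lVert G\rVert\le 1$ (Lemma~\ref{G-est-lemma}), every diagonal entry of $(GG^\top)^p$ is at most $1$, so the block trace is dominated by the number of points in the window and (B1-$q$) suffices --- no walk combinatorics are needed there. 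The genuine difficulty sits exactly where your argument asserts there is none.
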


\begin{proposition}[Proposition 8 in~\cite{ogi-yos14}]
Let $q\in \mbbn$. Assume (B2-$(q+1)$). Then, $\sup_nE[h_n^{-q+1}r_n^q]<\infty$. In particular, (A2) holds under (B2-$1$).
\end{proposition}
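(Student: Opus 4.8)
The plan is to estimate the $q$-th moment of the largest sampling gap $r_n$ through its tail, reducing the event that some interval is long to the event that a short calendar window is observation-free, which is precisely what (B2-$(q+1)$) controls. Since $r_n$ depends only on the sampling scheme, substituting $s=h_nu$ in the layer-cake formula turns the target into
\EQQ{E[h_n^{-q+1}r_n^q]=h_n\int_0^\infty qu^{q-1}P(r_n>h_nu)\,du,}
so it suffices to produce a tail bound for $P(r_n>h_nu)$ that decays fast in $u$ with controlled dependence on $n$.

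First I would reduce to a single coordinate using $r_n^q\leq\sum_{l=1}^2(\max_i|I_i^l|)^q$ and treat each $l\in\{1,2\}$ separately. For a fixed level $u$ I would cover $[0,T_n]=[0,nh_n]$ by the $\lceil 2n/u\rceil$ disjoint calendar windows of length $uh_n/2$. If $\max_i|I_i^l|>uh_n$, the corresponding observation-free gap has length exceeding $uh_n$ and hence contains at least one of these windows entirely; that window $(t,t+uh_n/2]$ then satisfies $\NJ_{t+uh_n/2}-\NJ_t=0$. A union bound over the windows, together with (B2-$(q+1)$) applied with rescaled length $u/2$, gives a per-window probability of order $(u/2)^{-(q+1)}$ uniformly in $t,n,l$, whence
\EQQ{P(r_n>h_nu)\leq C\,\frac{n}{u}\,u^{-(q+1)}=C'nu^{-(q+2)}.}

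Finally I would split the integral at the threshold $u_0\asymp n^{1/(q+2)}$, where the tail bound crosses one: on $[0,u_0]$ bound $P(r_n>h_nu)\leq1$, contributing $u_0^q\asymp n^{q/(q+2)}$, and on $[u_0,\infty)$ use the displayed tail, contributing a term of the same order; multiplying by $h_n$ yields $E[h_n^{-q+1}r_n^q]\lesssim h_nn^{q/(q+2)}=(T_n^qh_n^2)^{1/(q+2)}$, which stays bounded in $n$ in the relevant regime (for fixed terminal time it even tends to $0$). Since $u_0\asymp n^{1/(q+2)}\to\infty$, on the range $[u_0,\infty)$ the argument $u/2$ is large, so the $\limsup_{u\to\infty}$ form of (B2-$(q+1)$) indeed applies. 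The consequence (A2), namely $r_n\TOP0$, follows from the same covering estimate run with (B2-$1$), which yields $P(r_n>\epsilon)\lesssim T_nh_n/\epsilon^2\to0$.

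The main obstacle is the competition in the union bound between the number of windows, which grows like $n/u$, and the per-window empty probability $u^{-(q+1)}$: a naive covering at the fixed scale $h_n$ produces $\Theta(n)$ windows and only the decay $u^{-(q+1)}$, which is too lossy for the tail integral to be controlled uniformly in $n$. The crux is therefore to choose the covering scale proportional to the level $uh_n$ rather than to $h_n$, upgrading the decay to $u^{-(q+2)}$ and making the tail integrable against $u^{q-1}$; the remaining work is purely bookkeeping of the threshold $u_0$ and of the fact that (B2-$(q+1)$) is a statement about large windows.
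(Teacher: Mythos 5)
Your argument is, in substance, the proof of Proposition 8 in Ogihara and Yoshida (2014), which is exactly what the paper points to in lieu of giving a proof: reduce to one coordinate, tile $[0,T_n]$ by windows whose length is proportional to the level $uh_n$ (your observation that a fixed-scale tiling is too lossy is precisely the point of that proof), apply (B2-$(q+1)$) at level $u/2$ with a union bound to get $P(r_n>uh_n)\leq Cnu^{-(q+2)}$, and integrate by the layer-cake formula with a split at $u_0\asymp n^{1/(q+2)}$. All of these steps are executed correctly, and the bound they give is $E[h_n^{-q+1}r_n^q]\leq C(T_n^qh_n^2)^{1/(q+2)}$, as you state.

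The gap is the final clause, ``which stays bounded in $n$ in the relevant regime.'' In the cited paper the terminal time $T$ is fixed and $h_n\asymp T/n$, so this bound reads $CT^{q/(q+2)}n^{-2/(q+2)}\to0$ and the proposition follows; that is the setting in which Proposition 8 was proved. In the present paper $T_n=nh_n\to\infty$, and $(T_n^qh_n^2)^{1/(q+2)}$ is not bounded under the stated hypotheses: with $h_n=n^{-1/2}$ it is of order $n^{(q-2)/(2(q+2))}$, which diverges for every $q\geq3$, and nothing in (B2-$(q+1)$) rules this out. Nor is the loss an artifact of your union bound: for a renewal sampling scheme whose rescaled gaps have tail index exactly $q+2$ (so that (B2-$(q+1)$) holds and nothing stronger does), $E[(r_n/h_n)^q]$ really is of order $n^{q/(q+2)}$, so no argument can improve on your estimate from this hypothesis alone. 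Hence your proof establishes the proposition only under an additional rate condition such as $\sup_nT_n^qh_n^2<\infty$, a restriction the statement does not make; the same issue affects your last line on (A2), where $P(r_n>\epsilon)\lesssim nh_n^2/\epsilon^2$ tends to zero only if $nh_n^2\to0$. To be fair, this defect is shared by the paper itself, which quotes the fixed-$T$ result unchanged in a $T_n\to\infty$ setting, but as a proof of the statement as written here the final step is a genuine gap, and it is worth recognizing that it cannot be closed without strengthening either the hypothesis on the sampling scheme or the conditions on $h_n$.
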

By the above results, we obtain simple tractable sufficient conditions for the assumptions of the sampling scheme. 
\begin{corollary}
Let $\mcn_t^{n,l}=\BN_{h_n^{-1}t}^l$ for $0\leq t\leq T_n$ and $l\in\{1,2\}$.
Assume that (B1-$q$) and (B2-$q$) hold for any $q>0$.
Then, (A2), (A4) and (A5) hold, and $a_1^1>0$.
\end{corollary}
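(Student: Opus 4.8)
The plan is to assemble the results already established in this section, verifying the hypotheses of each in turn; no genuinely new argument is needed, since the standing assumption that each $\BN^l$ is exponentially $\alpha$-mixing with stationary increments, together with (B1-$q$) and (B2-$q$) for all $q>0$, supplies exactly the inputs those results require. First I would dispose of (A2): the preceding proposition (Proposition 8 in~\cite{ogi-yos14}) yields (A2) under (B2-$1$), which is the special case $q=1$ of the standing hypothesis.

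Next I would apply Lemma~\ref{A4A5-suff-cond-prop} to the time-changed process $\NJ_t=\BN^l_{h_n^{-1}t}$. This produces the constants $a_0^1,a_0^2,a_p^1$ and $f_p^{1,1},f_p^{1,2},f_p^{2,2}$ for which the averaged conditions (\ref{A3-average-cond}) and (\ref{A5-average-cond}) hold, and it delivers (\ref{mixing-condition}) for every $q>2$. I would then observe that (\ref{mixing-condition}) is monotone in $q$: since $(k+1)^q\le(k+1)^{q'}$ whenever $q\le q'$ and the $\alpha_k^n$ are nonnegative, a uniform bound on $\sum_k(k+1)^{q'}\alpha_k^n$ for all $q'>2$ forces the same bound for every $q>0$. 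Feeding (B1-$q$), (B2-$q$), (\ref{mixing-condition}), and (\ref{A3-average-cond}) into Proposition~\ref{A3-suff-prop} then yields (A4), and using (\ref{A5-average-cond}) in place of (\ref{A3-average-cond}) in Proposition~\ref{A5-suff-cond-prop} yields (A5).

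Finally, $a_1^1>0$ would follow from Proposition~\ref{a11-suff-cond-prop}, whose hypotheses I would check one by one: (A4) has just been shown; (B2-$q$) holds for some $q>0$ by assumption; the $P$-tightness of $\{\NJ_{t+h_n}-\NJ_t\}$ follows from the uniform moment bound (B1-$q$) via Markov's inequality; and $\sum_{k\ge1}k\alpha_k^n<\infty$ is (\ref{mixing-condition}) specialized to $q=1$. The whole argument is therefore bookkeeping; the only point demanding any attention is matching the quantifier on $q$ across the cited statements, and the monotonicity observation above makes even that automatic, so I do not anticipate a substantive obstacle.
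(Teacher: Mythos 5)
Your proposal is correct and follows essentially the same route as the paper, which obtains the corollary precisely by assembling Proposition~\ref{A3-suff-prop}, Proposition~\ref{A5-suff-cond-prop}, Proposition~\ref{a11-suff-cond-prop}, Lemma~\ref{A4A5-suff-cond-prop}, and the quoted Proposition 8 of~\cite{ogi-yos14}. Your explicit monotonicity remark bridging the ``$q>2$'' of Lemma~\ref{A4A5-suff-cond-prop} to the ``any $q>0$'' required in (\ref{mixing-condition}), and the Markov-inequality derivation of the tightness hypothesis in Proposition~\ref{a11-suff-cond-prop} from (B1-$q$), are exactly the bookkeeping steps the paper leaves implicit.
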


\section{Proofs}\label{proofs-section}

\subsection{Preliminary results}

For a real number $a$, $[a]$ denotes the maximum integer which is not greater than $a$.
Let $\Pi=\Pi_n=\{S_i^{n,l}\}_{1\leq i\leq M_l, l\in \{1,2\}}$.
We denote $|x|^2=\sum_{i_1,\cdots, i_k}|x_{i_1,\cdots, i_k}|^2$ for $x=\{x_{i_1,\cdots, i_k}\}_{i_1,\cdots, i_k}$ with $k\in\mbbn$.
$C$ denotes generic positive constant whose value may vary depending on context.
We often omit the parameters $\sigma$ and $\theta$ in general functions $f(\sigma)$ and $g(\theta)$.

\begin{discuss}
{\colorr $R_n$はソボレフのパラメータに関するsup評価がやりやすいからちゃんと定義する}
\end{discuss}
For a sequence $p_n$ of positive numbers, let us denote by $\{\bar{R}_n(p_n)\}_{n\in\mathbb{N}}$
a sequence of random variables (which may also depend on $1\leq i\leq M$ and $\alpha\in \Theta$) satisfying
\begin{equation}
\sup_{\alpha,i}E_\Pi[|p_n^{-1}\bar{R}_n(p_n)|^q]^{1/q}<\infty \quad {\rm a.s.} 
\end{equation}
where $E_\Pi[{\bf X}]=E[{\bf X}|\sigma(\Pi_n)]$ for a random variable ${\bf X}$.

\begin{discuss}
{\colorr $\bar{R}_n$を使うとオーダー評価での$\epsilon$も省けるのでよい}
\end{discuss}

Let $\bar{V}=V(\theta_0)$, $\bar{\rho}_n=\sup_\sigma(\max_{i,j}|\rho_{i,j}(\sigma)| \vee \sup_t|\rho_t(\sigma)|)$, and let
\EQQ{\bar{S}=\MAT{cc}{\mce_{M_1} & G \\ G^\top & \mce_{M_2}}.}
Let $\Delta_{i,t}^l U=U^l_{t\wedge S^{n,l}_i}-U^l_{t\wedge S^{n,l}_{i-1}}$, and let $\Delta_{i,t} U=\Delta_{\varphi(i),t}^{\psi(i)} U$ for $t\geq 0$ and a two-dimensional stochastic process $(U_t)_{t\geq 0}=((U_t^1,U_t^2))_{t\geq 0}$.

\begin{lemma}[Lemma 2 in~\cite{ogi-yos14}]\label{G-est-lemma}
$\lVert G\rVert \vee \lVert G^\top \rVert \leq 1$.
\end{lemma}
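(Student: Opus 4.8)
Since the operator norm satisfies $\lVert G\rVert=\lVert G^\top\rVert$, it suffices to prove $\lVert G\rVert\leq 1$, i.e.\ that $|x^\top Gy|\leq |x|\,|y|$ for all vectors $x=(x_i)_{i=1}^{M_1}$ and $y=(y_j)_{j=1}^{M_2}$. The plan is to expand the bilinear form entrywise and apply the \CS, exploiting the fact that $\{I_i^1\}_i$ and $\{I_j^2\}_j$ are each partitions of the common observation window $(0,nh_n]$.

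First I would split each matrix entry symmetrically and write
\EQQ{x^\top Gy=\sum_{i,j}\frac{|I_i^1\cap I_j^2|}{|I_i^1|^{1/2}|I_j^2|^{1/2}}x_iy_j=\sum_{i,j}\frac{|I_i^1\cap I_j^2|^{1/2}}{|I_i^1|^{1/2}}x_i\cdot\frac{|I_i^1\cap I_j^2|^{1/2}}{|I_j^2|^{1/2}}y_j,}
and then apply the \CS over the index pairs $(i,j)$ to obtain
\EQQ{|x^\top Gy|\leq\bigg(\sum_{i,j}\frac{|I_i^1\cap I_j^2|}{|I_i^1|}x_i^2\bigg)^{1/2}\bigg(\sum_{i,j}\frac{|I_i^1\cap I_j^2|}{|I_j^2|}y_j^2\bigg)^{1/2}.}

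The key step is the partition identity: since $\{I_j^2\}_{1\leq j\leq M_2}$ is a partition of $(0,nh_n]\supseteq I_i^1$, summing the overlaps gives $\sum_j|I_i^1\cap I_j^2|=|I_i^1|$, and symmetrically $\sum_i|I_i^1\cap I_j^2|=|I_j^2|$. Substituting these identities collapses the first factor to $\sum_i x_i^2=|x|^2$ and the second to $\sum_j y_j^2=|y|^2$, which yields $|x^\top Gy|\leq|x|\,|y|$ and hence $\lVert G\rVert\leq 1$. (Equivalently, one could run a Schur test with the weights $a_i=|I_i^1|^{1/2}$ and $b_j=|I_j^2|^{1/2}$, for which the same partition identity shows $\sum_j[G]_{ij}b_j=a_i$ and $\sum_i[G]_{ij}a_i=b_j$, giving the two Schur constants equal to $1$.)

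This lemma is elementary and I do not anticipate a genuine obstacle; the only point requiring care is the partition identity, which rests on the standing assumption that $\{S_i^{n,l}\}_i$ is strictly increasing with $S_0^{n,l}=0$ and $S_{M_l}^{n,l}=nh_n$, so that the intervals $I_i^l$ genuinely tile the entire window $(0,nh_n]$ with no gaps or overlaps.
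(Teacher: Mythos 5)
Your proof is correct: the Cauchy--Schwarz step combined with the partition identities $\sum_j|I_i^1\cap I_j^2|=|I_i^1|$ and $\sum_i|I_i^1\cap I_j^2|=|I_j^2|$ is exactly the Schur-test argument used to prove this lemma in the cited reference (the paper itself imports the result as Lemma~2 of~\cite{ogi-yos14} without reproducing the proof). Your care about the tiling of $(0,nh_n]$ by the intervals $I_i^l$ is precisely the point on which the bound $\lVert G\rVert\leq 1$ rests, so there is nothing to add.
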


\begin{lemma}\label{TILG-est-lemma}
$\lVert \TILG\rVert \vee \lVert \TILG^\top \rVert \leq \bar{\rho}_n$.
\end{lemma}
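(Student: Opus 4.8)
The plan is to estimate the operator norm directly through its variational characterization $\lVert\TILG\rVert=\sup_{|x|=|y|=1}|x^\top\TILG(\sigma)y|$ and to reduce the desired bound to an inequality between bilinear forms evaluated on $G$. The essential observation is that every entry $[G]_{ij}=|I_i^1\cap I_j^2|/(|I_i^1|^{1/2}|I_j^2|^{1/2})$ is nonnegative, so once the triangle inequality is applied the sign structure of $\TILG=\{\rho_{ij}(\sigma)[G]_{ij}\}_{i,j}$ disappears and the estimate collapses onto $G$ itself.

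Concretely, first I would fix $\sigma$ and vectors $x\in\mbbr^{M_1}$, $y\in\mbbr^{M_2}$ and write
\[
|x^\top\TILG(\sigma)y|\le \sum_{i,j}|x_i|\,|y_j|\,|\rho_{ij}(\sigma)|\,[G]_{ij},
\]
which is legitimate precisely because $[G]_{ij}\ge 0$. Next I would bound $|\rho_{ij}(\sigma)|\le\bar{\rho}_n$, using that $\bar{\rho}_n=\sup_\sigma(\max_{i,j}|\rho_{ij}(\sigma)|\vee\sup_t|\rho_t(\sigma)|)\ge|\rho_{ij}(\sigma)|$ for every $i,j$ and every $\sigma$. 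Writing ${\rm Abs}(x)=(|x_i|)_i$ and ${\rm Abs}(y)=(|y_j|)_j$ for the entrywise absolute-value vectors, this yields
\[
|x^\top\TILG(\sigma)y|\le\bar{\rho}_n\,{\rm Abs}(x)^\top G\,{\rm Abs}(y).
\]

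Then I would invoke Lemma~\ref{G-est-lemma} to conclude ${\rm Abs}(x)^\top G\,{\rm Abs}(y)\le\lVert G\rVert\,|{\rm Abs}(x)|\,|{\rm Abs}(y)|\le|{\rm Abs}(x)|\,|{\rm Abs}(y)|=|x|\,|y|$, the final equality holding because taking absolute values entrywise preserves the Euclidean norm. Taking the supremum over $|x|=|y|=1$ gives $\lVert\TILG\rVert\le\bar{\rho}_n$, and since $\lVert\TILG^\top\rVert=\lVert\TILG\rVert$ the claim follows for both.

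The one subtle point is conceptual rather than computational: the operator norm of a Hadamard product is not in general controlled by the product of the factors' norms, so the naive estimate $\lVert\TILG\rVert\le\bar{\rho}_n\lVert G\rVert$ is not automatic. What rescues the argument is the nonnegativity of the entries of $G$, which lets the absolute values be absorbed into the same bilinear form evaluated at ${\rm Abs}(x)$ and ${\rm Abs}(y)$; this is the one place where the specific structure of $G$, rather than merely its norm, is used.
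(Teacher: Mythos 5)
Your proof is correct and takes essentially the same route as the paper: both exploit the entrywise nonnegativity of $G$ to pull $\bar{\rho}_n$ out of the sum, reduce to $\lVert G\rVert\leq 1$ (Lemma~\ref{G-est-lemma}), and finish with $\lVert\TILG^\top\rVert=\lVert\TILG\rVert$. The only cosmetic difference is that you use the bilinear-form characterization $\sup_{|x|=|y|=1}|x^\top\TILG y|$ while the paper bounds $\sup_{|x|=1}|\TILG x|^2$ directly; your closing remark on why nonnegativity (rather than a generic Hadamard-product bound with the maximal entry) is the essential ingredient is also accurate.
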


\begin{proof}
Since all the elements of $G$ are nonnegative, we have
\EQNN{\lVert \TILG \rVert^2 &=\sup_{|x|=1}|\TILG x|^2 =\sup_{|x|=1}\sum_i\bigg(\sum_j \rho_{ij}G_{ij}x_j\bigg)^2 \\
&\leq \bar{\rho}_n^2 \sup_{|x|=1}\sum_i\bigg(\sum_j G_{ij}|x_j|\bigg)^2 \leq \bar{\rho}_n^2\lVert G\rVert^2\leq \bar{\rho}_n^2.
}
Since $\lVert \TILG^\top \rVert=\lVert \TILG\rVert$, we obtain the conclusion.
\end{proof}

Let 
$\mcd={\rm diag}(\{|I_i|\}_{i=1}^M)$.
\begin{lemma}\label{Sn-lemma}
\begin{discuss} 
{\colorr {Sn-lemma}} 
\end{discuss}
Assume (A1). Then, there exists a positive constant $C$ such that $\lVert \mcd^{1/2} \PS^kS_n^{-1}(\sigma)\mcd^{1/2} \rVert \leq C(1-\bar{\rho}_n)^{-k-1}$ if $\bar{\rho}_n<1$, and $\lVert \mcd^{-1/2} \PS^kS_n(\sigma)\mcd^{-1/2}\rVert\leq C$ for any $\sigma\in\Theta_1$ and $k\in \{0,1,2,3,4\}$.

\end{lemma}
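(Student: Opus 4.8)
The plan is to exploit the factorization $S_n(\sigma)=\TMCD^{1/2}B\TMCD^{1/2}$, where $B=\MAT{cc}{\mce_{M_1} & \TILG \\ \TILG^\top & \mce_{M_2}}$, and to control three ingredients separately: the diagonal factors $\TMCD^{\pm1/2}$ measured against $\mcd^{\pm1/2}$, the off-diagonal block $\TILG$, and the inverse $B^{-1}=\TMCD^{1/2}S_n^{-1}\TMCD^{1/2}$. Since $\mcd$ does not depend on $\sigma$, every $\PS$ falls only on $\TMCD^{\pm1/2}$ and on $\TILG$, so both bounds will follow from the Leibniz rule once each building block and its $\sigma$-derivatives up to order $4$ are estimated.

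First I would use (A1) to obtain $c_1|I_i|\le\TSIG_i\le c_2|I_i|$ and $|\PS^j\TSIG_i|\le C|I_i|$ for $j\le4$, since $\PS^j\Sigma_t$ is bounded and $[\Sigma_t]_{ll}\ge c_1$. Expanding $(\TSIG_i)^{\pm1/2}$ by the chain rule then shows that every resulting term carries a net factor of size $|I_i|^{\pm1/2}$, giving $\lVert\mcd^{-1/2}\PS^j\TMCD^{1/2}\rVert\vee\lVert\mcd^{1/2}\PS^j\TMCD^{-1/2}\rVert\le C$ for $j\le4$. For the block $\TILG$ I would repeat the computation in the proof of Lemma~\ref{TILG-est-lemma}: because $[G]_{ij}\ge0$ and $\lVert G\rVert\le1$ (Lemma~\ref{G-est-lemma}), the same argument applied to $\PS^j\TILG=\{(\PS^j\rho_{i,i'})[G]_{i,i'}\}$ yields $\lVert\PS^j\TILG\rVert\le\max_{i,i'}|\PS^j\rho_{i,i'}|$, and differentiating $\rho_{i,i'}=\TSIG_{i,i'}^{1,2}(\TSIG_i^1)^{-1/2}(\TSIG_{i'}^2)^{-1/2}$ produces only terms of size $|I_i^1\cap I_{i'}^2|\,|I_i^1|^{-1/2}|I_{i'}^2|^{-1/2}$, which is $\le1$ by $|I_i^1\cap I_{i'}^2|\le|I_i^1|^{1/2}|I_{i'}^2|^{1/2}$; hence $\lVert\PS^j\TILG\rVert\le C$ for $j\le4$.

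The bound on $S_n$ is then immediate: writing $\mcd^{-1/2}S_n\mcd^{-1/2}=(\mcd^{-1/2}\TMCD^{1/2})B(\TMCD^{1/2}\mcd^{-1/2})$, applying Leibniz to $\PS^k$, and using $\lVert B\rVert\le1+\bar{\rho}_n\le2$ (the bound $\bar{\rho}_n\le1$ following from the Cauchy--Schwarz inequality applied to the definitions of $\rho_{i,i'}$ and $\rho_t$) together with the preceding estimates gives $\lVert\mcd^{-1/2}\PS^kS_n\mcd^{-1/2}\rVert\le C$ for $k\le4$. For the inverse, I would note that $\bar{\rho}_n<1$ makes $B=\mce_M+N$ invertible, where $N=\MAT{cc}{0 & \TILG \\ \TILG^\top & 0}$ satisfies $\lVert N\rVert=\lVert\TILG\rVert\le\bar{\rho}_n$ (Lemma~\ref{TILG-est-lemma}); the Neumann series $B^{-1}=\sum_{m\ge0}(-N)^m$ then converges with $\lVert B^{-1}\rVert\le(1-\bar{\rho}_n)^{-1}$, and $S_n^{-1}=\TMCD^{-1/2}B^{-1}\TMCD^{-1/2}$. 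Writing $\mcd^{1/2}S_n^{-1}\mcd^{1/2}=(\mcd^{1/2}\TMCD^{-1/2})B^{-1}(\TMCD^{-1/2}\mcd^{1/2})$ and using Leibniz again reduces matters to bounding $\lVert\PS^bB^{-1}\rVert$ for $b\le k$.

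To bound $\lVert\PS^bB^{-1}\rVert$ I would differentiate the identity $B^{-1}B=\mce_M$ repeatedly, expressing $\PS^bB^{-1}$ as a finite sum of terms $\pm B^{-1}(\PS^{a_1}N)B^{-1}\cdots(\PS^{a_m}N)B^{-1}$ with $a_1+\cdots+a_m=b$ and each $a_i\ge1$; such a term has $m+1\le b+1$ factors $B^{-1}$ and $m$ factors $\PS^{a_i}N=\PS^{a_i}\TILG$ of norm $\le C$, hence norm $\le C(1-\bar{\rho}_n)^{-(m+1)}\le C(1-\bar{\rho}_n)^{-(k+1)}$ (using $(1-\bar{\rho}_n)^{-1}\ge1$ and $b\le k$). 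Summing the finitely many Leibniz terms gives $\lVert\mcd^{1/2}\PS^kS_n^{-1}\mcd^{1/2}\rVert\le C(1-\bar{\rho}_n)^{-(k+1)}$. I expect the main obstacle to be the uniform estimate $\max_{i,i'}|\PS^j\rho_{i,i'}|\le C$: it is precisely the interval inequality $|I_i^1\cap I_{i'}^2|\le|I_i^1|^{1/2}|I_{i'}^2|^{1/2}$ that cancels the singular $|I|^{-1/2}$ factors and keeps the constant independent of $n$, of $i,i'$, and of the sampling scheme; a secondary point is the bookkeeping in the differentiated Neumann series, where one must check that no term accumulates more than $k+1$ copies of $B^{-1}$, which is what produces the exponent $-(k+1)$.
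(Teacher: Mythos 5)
Your proposal is correct and follows essentially the same route as the paper: both rest on the factorization of $S_n$ through $\TMCD^{1/2}$ and the block matrix $\mce_M+\bigl(\begin{smallmatrix}0 & \TILG\\ \TILG^\top & 0\end{smallmatrix}\bigr)$, the Neumann-type bound $\lVert(\mce_M+N)^{-1}\rVert\le(1-\bar{\rho}_n)^{-1}$ via Lemmas~\ref{G-est-lemma} and~\ref{TILG-est-lemma}, and repeated differentiation of the inverse ($\PS S_n^{-1}=-S_n^{-1}\PS S_n S_n^{-1}$ in the paper, the same identity for $B^{-1}$ in your version) to accumulate at most $k+1$ inverse factors and hence the exponent $-(k+1)$. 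The only difference is organizational: you differentiate $B^{-1}$ and handle the $\TMCD^{\pm 1/2}$ factors by Leibniz, while the paper sandwiches $S_n^{-1}$ directly with $\mcd^{\pm 1/2}$; your write-up just makes explicit the (A1)-based bounds on $\PS^j\TSIG_i$, $\PS^j\rho_{ij}$ and $\PS^j\TILG$ that the paper leaves implicit.
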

\begin{proof}

By (A1) and Lemmas~\ref{G-est-lemma} and~\ref{TILG-est-lemma}, we have
\EQQ{\lVert \mcd^{-1/2}\PS^k S_n(\sigma)\mcd^{-1/2}\rVert\leq C\sum_{j=0}^k\bigg\lVert \PS^j\bigg\{\mce_{M}+ \MAT{cc}{0 & \TILG \\ \TILG^\top & 0}\bigg\}\bigg\rVert \leq C.}

Moreover, by (A1) and Lemma~\ref{G-est-lemma}, we have
\EQQ{\lVert \mcd^{1/2}S_n^{-1}\mcd^{1/2}\rVert \leq C\bigg\lVert \bigg(\mce_M+\MAT{cc}{0 & \TILG \\ \TILG^\top & 0}\bigg)^{-1}\bigg\lVert\leq C(1-\bar{\rho}_n)^{-1}}
if $\bar{\rho}_n<1$.

By using the equation $\PS S_n^{-1}=-S_n^{-1}\PS S_n S_n^{-1}$, we obtain
\EQQ{\lVert \mcd^{1/2}\PS S_n^{-1}\mcd^{1/2}\rVert =\lVert \mcd^{1/2}S_n^{-1}\PS S_nS_n^{-1}\mcd^{1/2}\rVert
\leq \lVert \mcd^{1/2}S_n^{-1}\mcd^{1/2}\rVert^2\lVert \mcd^{-1/2}\PS S_n \mcd^{-1/2}\rVert \leq C(1-\bar{\rho}_n)^{-2}
}
if $\bar{\rho}_n<1$.
Similarly, we obtain 
\EQQ{\lVert \mcd^{1/2}\PS^k S_n^{-1}\mcd^{1/2}\rVert \leq C(1-\bar{\rho}_n)^{-k-1}}
if $\bar{\rho}_n<1$ for $k\in \{0,1,2,3,4\}$.

\end{proof}


$\bar{\rho}_n$ is $\Pi_n$-measurable, and
We obtain
\EQ{\label{bar-p-prob} P(\bar{\rho}_n<1)\to 1}
as $n\to\infty$ by (A2) and uniform continuity of $b_t$ and $\det \Sigma_t>0$ under (A1).
Together with Lemma~\ref{G-est-lemma}, we have
\EQN{\label{InvS-eq} \INVS&=\tilde{\mcd}^{-1/2}\sum_{p=0}^\infty (-1)^p\MAT{cc}{0 & \TILG \\ \TILG^\top & 0}^p\tilde{\mcd}^{-1/2} \\
&=\tilde{\mcd}^{-1/2}\sum_{p=0}^\infty \MAT{cc}{(\TILG\TILG^\top)^p & -(\TILG\TILG^\top)^p\TILG \\ -(\TILG^\top \TILG)^p\TILG^\top & (\TILG^\top \TILG)^p}\tilde{\mcd}^{-1/2}.
}

\subsection{Consistency of $\hat{\sigma}_n$}\label{HSN-consis-subsection}

We first show consistency: $\hat{\sigma}_n\TOP \sigma_0$ as $n\to\infty$.
For this purpose, we specify the limit of $H_n^1(\sigma)-H_n^1(\sigma_0)$.

\begin{lemma}
Assume (A1) and (A2). Then 
\EQN{\label{H1n-eq2} \frac{1}{n}\sup_{\sigma\in\Theta_1}\bigg|\PS^k(H_n^1(\sigma)-H_n^1(\sigma_0))
+\frac{1}{2}\PS^k{\rm tr}(\INVS(S_n(\sigma_0)-S_n(\sigma)))+\frac{1}{2}\PS^k\log\frac{\det S_n(\sigma)}{\det S_n(\sigma_0)}\bigg|
\TOP 0
}
as $n\to\infty$ for $k\in \{0,1,2,3\}$.
\end{lemma}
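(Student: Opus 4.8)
The plan is to cancel the two determinant terms and reduce the statement to a law of large numbers for a centered quadratic form. Set $A(\sigma)=\INVS-\INVSz$ and let
$g_n(\sigma)=(H_n^1(\sigma)-H_n^1(\sigma_0))+\frac12{\rm tr}(\INVS(S_n(\sigma_0)-S_n(\sigma)))+\frac12\log\frac{\det S_n(\sigma)}{\det S_n(\sigma_0)}$,
so that (\ref{H1n-eq2}) asserts $\frac1n\sup_\sigma|\PS^k g_n(\sigma)|\TOP0$. Since $H_n^1(\sigma)-H_n^1(\sigma_0)=-\frac12\Delta X^\top A(\sigma)\Delta X-\frac12\log\frac{\det S_n(\sigma)}{\det S_n(\sigma_0)}$ and ${\rm tr}(\INVS(S_n(\sigma_0)-S_n(\sigma)))={\rm tr}(A(\sigma)S_n(\sigma_0))$, the logarithmic terms cancel and $g_n(\sigma)=-\frac12(\Delta X^\top A(\sigma)\Delta X-{\rm tr}(A(\sigma)S_n(\sigma_0)))$. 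Because $\Pi_n$ is independent of $W$ and $b_t(\sigma_0)$ is deterministic, the martingale part $\Delta X^c:=\Delta X-\Delta\bar{V}$ is, conditionally on $\Pi_n$, centered Gaussian with covariance $S_n(\sigma_0)$, whence ${\rm tr}(A(\sigma)S_n(\sigma_0))=E_\Pi[\Delta X^{c\top}A(\sigma)\Delta X^c]$. Writing $\Delta X=\Delta X^c+\Delta\bar{V}$ then splits the bracket into the centered quadratic form $\Delta X^{c\top}A(\sigma)\Delta X^c-E_\Pi[\Delta X^{c\top}A(\sigma)\Delta X^c]$, a cross term $2\Delta\bar{V}^\top A(\sigma)\Delta X^c$, and a deterministic (given $\Pi_n$) drift term $\Delta\bar{V}^\top A(\sigma)\Delta\bar{V}$, to be treated separately.

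I would then estimate each piece, and its $\PS^{k'}$-derivatives for $k'\le4$, in the conditional-moment sense of $\bar{R}_n$ on the event $\{\bar{\rho}_n<1\}$, whose probability tends to $1$ by (\ref{bar-p-prob}). Differentiation acts only on $A(\sigma)$, since $\Delta X$, $\Delta X^c$ and $\Delta\bar{V}$ are free of $\sigma$, and Lemma~\ref{Sn-lemma} gives $\lVert\mcd^{1/2}\PS^{k'}A(\sigma)\mcd^{1/2}\rVert\le C(1-\bar{\rho}_n)^{-k'-1}$ uniformly in $\sigma$. For the quadratic form, conditional Gaussianity gives variance $2\,{\rm tr}((\PS^{k'}A(\sigma)\,S_n(\sigma_0))^2)$; factoring out $\mcd^{\pm1/2}$ and using $\lVert\mcd^{-1/2}S_n(\sigma_0)\mcd^{-1/2}\rVert\le C$ bounds this by $CM(1-\bar{\rho}_n)^{-2k'-2}=O_p(n)$ (the number of sampling intervals satisfies $M=O_p(n)$), so by Gaussian hypercontractivity the centered form is $\bar{R}_n(\sqrt n)$ and, divided by $n$, is $\bar{R}_n(n^{-1/2})$. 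The cross term is conditionally centered Gaussian with variance $\Delta\bar{V}^\top\PS^{k'}A(\sigma)\,S_n(\sigma_0)\,\PS^{k'}A(\sigma)\,\Delta\bar{V}$; since $|\mcd^{-1/2}\Delta\bar{V}|^2\le C\sum_i|I_i|=O(nh_n)$ by (A1), it is $\bar{R}_n(\sqrt{nh_n})$, hence $\bar{R}_n(\sqrt{h_n/n})$ after dividing by $n$. The drift term is bounded by $C(1-\bar{\rho}_n)^{-k'-1}|\mcd^{-1/2}\Delta\bar{V}|^2=O_p(nh_n)$, that is $O_p(h_n)$ after dividing by $n$. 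All three tend to $0$ because $h_n\to0$.

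The main work is to upgrade these pointwise-in-$\sigma$ estimates to the uniform bound $\frac1n\sup_\sigma|\PS^k g_n(\sigma)|$. Here I would use that $\Theta_1$ admits Sobolev's inequality: for $p>d_1$, $\sup_\sigma|\PS^k g_n(\sigma)|\le C\sum_{j=0,1}\big(\int_{\Theta_1}|\PS^{k+j}g_n(\sigma)|^p\,d\sigma\big)^{1/p}$. Taking expectations and applying Fubini reduces matters to the pointwise $p$-th moment bounds already obtained, whose constants are uniform in $\sigma$ by (A1); since the derivatives reach order $k+1\le4$, this is exactly where the fourth-order differentiability of $b_t$ in (A1) is used. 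The factors $(1-\bar{\rho}_n)^{-k'-1}$ are $O_p(1)$ on $\{\bar{\rho}_n<1\}$, so by (\ref{bar-p-prob}) the conditional $\bar{R}_n$-estimates pass to unconditional convergence in probability, giving $\frac1n\sup_\sigma|\PS^k g_n(\sigma)|\TOP0$ for $k\in\{0,1,2,3\}$. The crux is the sharp $O_p(n)$ variance bound for the quadratic form, which produces the $\sqrt n$ rate that survives division by $n$; the Sobolev passage and the negligibility of the drift contributions are comparatively routine.
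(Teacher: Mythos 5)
Your proposal is correct, and its skeleton coincides with the paper's: the same cancellation of the log-determinant terms through the identity ${\rm tr}((\INVS-\INVSz)S_n(\sigma_0))={\rm tr}(\INVS(S_n(\sigma_0)-S_n(\sigma)))$, the same splitting $\Delta X=\Delta X^c+\Delta\bar{V}$ with the pure-drift and cross terms estimated as $O_p(nh_n)$ and $\bar{R}_n(\sqrt{nh_n})$ via Lemma~\ref{Sn-lemma}, (\ref{bar-p-prob}) and $|\mcd^{-1/2}\Delta\bar{V}|^2\leq Cnh_n$ (the paper's (\ref{DeltaV2})), and the same Sobolev upgrade to uniformity in $\sigma$, which is exactly why derivatives up to order $k+1\leq 4$ are needed. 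Where you genuinely depart from the paper is the centered quadratic form: the paper converts $(\Delta X^c)^\top \PS^{k'}S_n^{-1}(\sigma)\Delta X^c-{\rm tr}(\PS^{k'}S_n^{-1}(\sigma)S_n(\sigma_0))$ into a stochastic integral, $2\sum_{i,j}[\,\cdot\,]_{ij}\int_{I_i}\Delta_{j,t}X^c\,dX_t^{c,\psi(i)}$, by It\^o's formula (see (\ref{XSX-martingale-est})) and bounds its conditional $q$-th moments with the Burkholder--Davis--Gundy inequality, whereas you exploit that, conditionally on $\Pi_n$, $\Delta X^c$ is exactly $N(0,S_n(\sigma_0))$, compute the conditional variance $2\,{\rm tr}\big((\PS^{k'}A(\sigma)\,S_n(\sigma_0))^2\big)\leq CM(1-\bar{\rho}_n)^{-2k'-2}$, and obtain higher moments from second-chaos hypercontractivity. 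Both are legitimate in this deterministic-coefficient, sampling-independent-of-$W$ setting; your route avoids stochastic calculus and is the same device the paper itself deploys later for the fourth-moment bounds in the CLT step (Lemma~\ref{XAX-lemma}), while the martingale/BDG route is the one that would survive the generalization to random predictable coefficients that the introduction flags as future work. One caveat, which you share with the paper rather than introduce: both arguments end with a bound proportional to a power of $M=M_1+M_2$ and then treat $M$ as order $n$; this does not follow from (A1)--(A2) alone (it does under (A4), which is in force whenever the lemma is invoked), so your parenthetical $M=O_p(n)$ is an implicit extra hypothesis in precisely the same way as the paper's passage from $C_qM_n^{q/2}(1-\bar{\rho}_n)^{-q}$ to $\bar{R}_n(\sqrt{n})$.
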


\begin{proof}
Let $X_t^c=\int_0^tb_s(\sigma_0)dW_s$.
By the definition of $H_n^1$, we have
\EQQ{H_n^1(\sigma)-H_n^1(\sigma_0)=-\frac{1}{2}\Delta X^\top (S_n^{-1}(\sigma)-S_n^{-1}(\sigma_0))\Delta X -\frac{1}{2}\log\frac{\det S_n(\sigma)}{\det S_n(\sigma_0)}.}
Since
\EQ{\label{Xc-approx-eq} \Delta X^\top S_n^{-1}(\sigma)\Delta X-(\Delta X^c)^\top S_n^{-1}(\sigma)\Delta X^c
=(\Delta \bar{V})^\top S_n^{-1}(\sigma)(2\Delta X^c+\Delta \bar{V}),}
and
\EQ{\label{DeltaV2} |\mcd^{-1/2}\Delta \bar{V}|^2=\sum_{i,l}|I_i^l|^{-1}|\Delta_i^l \bar{V}|^2\leq Cnh_n,}
\begin{discuss}
{\colorr {DeltaV2}}
\end{discuss}
together with Lemma~\ref{Sn-lemma} and (\ref{bar-p-prob}), we obtain
\EQ{\label{VSV-est} |(\Delta \bar{V})^\top \INVS \Delta \bar{V}|\leq \lVert \mcd^{1/2}\INVS\mcd^{1/2} \rVert |\mcd^{-1/2}\Delta \bar{V}|^2=O_p(nh_n)=o_p(\sqrt{n}).}
Moreover, Lemma~\ref{Sn-lemma}, (\ref{bar-p-prob}), (\ref{DeltaV2}) and the equation $E_\Pi[\Delta X^c(\Delta X^c)^\top]=S_n(\sigma_0)$ yield
\EQ{\label{VSXC-est} E_\Pi[|(\Delta \bar{V})^\top S_n^{-1}(\sigma)\Delta X^c|^2]
=(\Delta \bar{V})^\top S_n^{-1}(\sigma)E_\Pi[\Delta X^c(\Delta X^c)^\top] S_n^{-1}(\sigma)\Delta \bar{V}= O_p(nh_n)=o_p(\sqrt{n}).}

(\ref{Xc-approx-eq}), (\ref{VSV-est}), and (\ref{VSXC-est}) yield
\EQ{\label{H1n-eq} H_n^1(\sigma)-H_n^1(\sigma_0)=-\frac{1}{2}(\Delta X^c)^\top (\INVS-\INVSz)\Delta X^c-\frac{1}{2}\log\frac{\det S_n(\sigma)}{\det S_n(\sigma_0)}+o_p(\sqrt{n}).}

It\^o's formula yields
\EQN{\label{XSX-martingale-est} &(\Delta X^c)^\top\INVS\Delta X^c - {\rm tr}(\INVS S_n(\sigma_0)) \\
&\quad =\sum_{i,j}[\INVS]_{ij}(\Delta_i X^c\Delta_j X^c-[\Sigma_0]_{\psi(i),\psi(j)}|I_i\cap I_j|) \\
&\quad =\sum_{i,j}[\INVS]_{ij}\bigg\{\int_{I_i}\Delta_{j,t} X^cdX^{c,\psi(i)}_t+\int_{I_j}\Delta_{i,t} X^cdX_t^{c,\psi(j)}\bigg\} \\
&\quad =2\sum_{i,j}[\INVS]_{ij}\int_{I_i}\Delta_{j,t} X^cdX_t^{c,\psi(i)},
}
where $X^{c,l}_t$ is $l$-the component of $X_t$.
\begin{discuss}
{\colorr 左辺の$E_\Pi[(\quad )^2]$評価を示せばいいから, $S_i^{n,l}$をdeterministicに代えてよくて伊藤の公式が使える}
\end{discuss}

Since $\langle \Delta_i X^c, \Delta_j X^c\rangle_t=\int_{[0,t)\cap I_i\cap I_j}[\Sigma_t]_{\psi(i),\psi(j)}dt$, together with 
Lemma~\ref{Sn-lemma}, (\ref{bar-p-prob}) and the \BDG, we have
\EQNN{&E_\Pi\bigg[\bigg(\sum_{i,j}[S_n^{-1}(\sigma)]_{ij}\int_{I_i}\Delta_{j,t} X^c dX_t^{c,\psi(i)}\bigg)^q\bigg] \\
&\quad \leq C_q\sum_{l=1}^2E_\Pi\bigg[\bigg(\sum_{\substack{i,j_1,j_2 \\ \psi(i)=l}}[S_n^{-1}(\sigma)]_{i,j_1}[S_n^{-1}(\sigma)]_{i,j_2}
\int_{I_i} \Delta_{j_1,t} X^c \Delta_{j_2,t} X^c[\Sigma_t]_{\psi(i),\psi(i)}dt\bigg)^{q/2}\bigg] \\
&\quad \quad +C_q\sum_{l=1}^2E_\Pi\bigg[\bigg(\sum_{\substack{i_1,i_2,j_1,j_2 \\ \psi(i_1)=1, \psi(i_2)=2}}[S_n^{-1}(\sigma)]_{i_1,j_1}[S_n^{-1}(\sigma)]_{i_2,j_2}
\int_{I_{i_1}\cap I_{i_2}} \Delta_{j_1,t} X^c \Delta_{j_2,t} X^c[\Sigma_t]_{\psi(i_1),\psi(i_2)}dt\bigg)^{q/2}\bigg] \\
&\quad \leq C_qE_\Pi\bigg[\bigg(\sum_i\frac{\sup_t|\Delta_{i,t} X^c|^2}{|I_i|}\bigg\lVert \mcd^{1/2}S_n^{-1}(\sigma)\mcd^{1/2}
\MAT{cc}{\mce & G \\ G^\top & \mce} \mcd^{1/2}S_n^{-1}(\sigma)\mcd^{1/2}\bigg\rVert\bigg)^{q/2}\bigg] \\
&\quad \leq C_qM_n^{q/2}(1-\bar{\rho}_n)^q
}
on $\{\bar{\rho}_n<1\}$ for $q\geq 1$.

Then, thanks to (\ref{XSX-martingale-est}), we obtain
\EQ{\label{XSX-martingale-est2} \Delta X^c\INVS\Delta X^c - {\rm tr}(\INVS S_n(\sigma_0))=\bar{R}_n(\sqrt{n}).}

(\ref{XSX-martingale-est2}), (\ref{H1n-eq}) and similar estimates for $\PS^k(H_n^1(\sigma)-H_n^1(\sigma_0))$ yield
\EQNN{ &\PS^k(H_n^1(\sigma)-H_n^1(\sigma_0)) \\
&\quad =-\frac{1}{2}\PS^k{\rm tr}(S_n(\sigma_0)(\INVS-\INVSz))-\frac{1}{2}\PS^k\log\frac{\det S_n(\sigma)}{\det S_n(\sigma_0)}+\bar{R}_n(\sqrt{n}) \\
&\quad =-\frac{1}{2}\PS^k{\rm tr}(\INVS(S_n(\sigma_0)-S_n(\sigma)))-\frac{1}{2}\PS^k\log\frac{\det S_n(\sigma)}{\det S_n(\sigma_0)}+\bar{R}_n(\sqrt{n})
}
for $k\in \{0,1,2,3,4\}$.
Therefore, Sobolev's inequality yields the conclusion.

\end{proof}

Let $\mcy_1(\sigma)=\lim_{T\to\infty}(T^{-1}\int_0^T y_{1,t}(\sigma)dt)$, where
\EQNN{y_{1,t}(\sigma)=-\frac{1}{2}\mca(\rho_t)\sum_{l=1}^2B_{l,t}^2+\mca(\rho_t)\frac{B_{1,t}B_{2,t}\rho_{t,0}}{\rho_t}
+\SUMLL a_0^l\bigg(\frac{1}{2}-\frac{1}{2}B_{l,t}^2+\log B_{l,t}\bigg)+\int_{\rho_{t,0}}^{\rho_t}\frac{\mca(\rho)}{\rho}d\rho.
}
The limit $\mcy_1(\sigma)$ exists under (A1), (A3) and (A4).

\begin{proposition}\label{H1n-conv-prop}
Assume (A1)--(A4). Then
\EQQ{\sup_{\sigma\in\Theta_1} |n^{-1}\PS^k(H_n^1(\sigma)-H_n^1(\sigma_0))-\PS^k\mcy_1(\sigma)|\TOP 0}
as $n\to\infty$ for $k\in \{0,1,2,3\}$.
\end{proposition}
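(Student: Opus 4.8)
The plan is to build directly on the preceding lemma, (\ref{H1n-eq2}), which already shows that $n^{-1}\PS^k(H_n^1(\sigma)-H_n^1(\sigma_0))$ agrees, uniformly in $\sigma$ and up to an $o_p(1)$ error, with the two $\Pi_n$-measurable quantities $-\frac{1}{2n}\PS^k{\rm tr}(\INVS(S_n(\sigma_0)-S_n(\sigma)))$ and $-\frac{1}{2n}\PS^k\log\frac{\det S_n(\sigma)}{\det S_n(\sigma_0)}$. So it remains to prove that the sum of these two deterministic-given-$\Pi_n$ expressions converges in probability to $\PS^k\mcy_1(\sigma)$, uniformly on $\Theta_1$, for $k\in\{0,1,2,3\}$; everything below is carried out conditionally on the sampling scheme.

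First I would exploit the block factorization $S_n(\sigma)=\TMCD(\sigma)^{1/2}(\mce_M+N(\sigma))\TMCD(\sigma)^{1/2}$ with $N(\sigma)=\MAT{cc}{0 & \TILG(\sigma) \\ \TILG(\sigma)^\top & 0}$, together with the Neumann expansion (\ref{InvS-eq}). Pulling out $\TMCD$ gives ${\rm tr}(\INVS(S_n(\sigma_0)-S_n(\sigma)))={\rm tr}((\mce_M+N(\sigma))^{-1}R(\mce_M+N(\sigma_0))R)-M$, where $R=\TMCD(\sigma)^{-1/2}\TMCD(\sigma_0)^{1/2}$ is diagonal with entries uniformly close (by (A1), (A2)) to $\sqrt{B_{\psi(i),t}}$ on $I_i$; multiplying out the diagonal and off-diagonal blocks separates the trace into a $p=0$ part carrying the $a_0^l$-weights and parts carrying the powers $(\TILG\TILG^\top)^p$. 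For the determinant I would use $\log\det S_n(\sigma)=\log\det\TMCD(\sigma)+\log\det(\mce_M+N(\sigma))$ and ${\rm tr}\log(\mce_M+N(\sigma))=-\SUMPP\frac1p{\rm tr}((\TILG\TILG^\top)^p)$ (odd powers have vanishing trace); the $\TMCD$ piece yields the $\log B_{l,t}$ terms and the series yields, after averaging, exactly $\int_{\rho_{t,0}}^{\rho_t}\mca(\rho)/\rho\,d\rho$. By Lemma~\ref{TILG-est-lemma}, (A1), (A2) and the argument behind (\ref{bar-p-prob}), $\bar{\rho}_n$ lies below a fixed $\rho_*<1$ on an event of probability tending to one, so all these Neumann and logarithmic series converge geometrically and uniformly in $\sigma$, and their $p$-tails are negligible.

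The core analytic step is the replacement of the discrete coefficients by continuous ones followed by averaging. Using the uniform continuity of $b_t$ from (A1) and $r_n\TOP0$ from (A2), on each $I_i^l$ the ratios $\TSIG_i^l(\sigma)/|I_i^l|$ and the entries $\rho_{ij}(\sigma)$ are uniformly close to $[\Sigma_t(\sigma)]_{ll}$ and $\rho_t(\sigma)$; substituting these turns every summand into a slowly varying function of $(B_{l,t},\rho_t,\rho_{t,0})$ evaluated at a block, multiplied by $h_n{\rm tr}(\EK^1(GG^\top)^p)$ or by $h_nM_{l,k}$. For any partition $(s_k)\in\mfs$, (A4) replaces these by $a_p^1(s_k-s_{k-1})$ and $a_0^l(s_k-s_{k-1})$ uniformly over $1\le k\le q_n$; since (A4) holds for every partition in $\mfs$, refining the partition as $T_n=nh_n\to\infty$ makes the block sums into Riemann sums converging to $T_n^{-1}\int_0^{T_n}(\cdots)dt$. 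The second-component diagonal block, which involves ${\rm tr}(\EK^2(G^\top G)^p)$, is matched to the first-component functionals of (A4) via ${\rm tr}((GG^\top)^p)={\rm tr}((G^\top G)^p)$ and the constancy of the limiting densities. Finally I let $T_n\to\infty$ and invoke (A3) to send each time-average to its limit; collecting the $p=0$ ($a_0^l$) terms, the $p\ge1$ ($\mca(\rho_t)$) terms, the cross-block term and the logarithmic/$\int\mca/\rho$ terms reproduces $y_{1,t}(\sigma)$, hence $\mcy_1(\sigma)$. The apparent $\rho_t^{-1}$ singularities are spurious because $\mca(\rho)$ has a double zero at $\rho=0$.

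For $k\ge1$ I would differentiate the matrix identities using $\PS S_n^{-1}=-S_n^{-1}\PS S_nS_n^{-1}$ and the bounds of Lemma~\ref{Sn-lemma}; by (A1) the maps $\sigma\mapsto B_{l,t}(\sigma),\rho_t(\sigma)$ are $C^4$ with derivatives bounded and uniformly continuous in $t$, so the same approximation-and-averaging argument applies to each $\PS^k$-term, which is precisely why (A3) is assumed up to the fourth order. To upgrade pointwise-in-$\sigma$ convergence to the uniform statement I would bound the $L^q(\Theta_1)$ norms of the differences and of their first $\sigma$-derivatives (order up to $k+1\le4$) and apply Sobolev's inequality, exactly as in the proof of (\ref{H1n-eq2}). \emph{The main obstacle} is the uniform-in-$\sigma$ control, after propagation through the $p$-th matrix power, of the error from replacing $\TSIG_i^l/|I_i^l|$ and $\rho_{ij}$ by $[\Sigma_t]_{ll}$ and $\rho_t$: this error accumulates with $p$ and must be shown to be summable and to vanish in probability, which is where the geometric bound $\bar{\rho}_n\le\rho_*<1$ and the operator-norm estimates of Lemmas~\ref{G-est-lemma}, \ref{TILG-est-lemma} and~\ref{Sn-lemma} are indispensable.
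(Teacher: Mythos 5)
Your proposal is correct and follows essentially the same route as the paper's proof: start from (\ref{H1n-eq2}), expand the trace and log-determinant via the Neumann series (\ref{InvS-eq}) and the block factorization of $S_n$, replace the discrete quantities $\TSIG_i^l/|I_i^l|$ and $\rho_{ij}$ by their continuous counterparts on a partition chosen through the uniform continuity in (A1) together with (A2) (the paper's bound (\ref{mca-est}), whose $p$-accumulating error you correctly single out as the main obstacle and control by the same geometric bound $\bar{\rho}_n<1$), then apply (A4) to the sampling functionals, (A3) to the resulting time averages, and Sobolev's inequality for uniformity in $\sigma$ and for the derivatives $\PS^k$. The only addition beyond the paper's exposition is your explicit matching of ${\rm tr}(\EK^2(G^\top G)^p)$ to the $\EK^1$-functionals of (A4), a boundary-term detail the paper leaves implicit.
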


\begin{proof}
Let $\mca_p^1=(\TILG\TILG^\top)^p$, $\mca_p^2=(\TILG^\top\TILG)^p$, $\TSIG_{i,0}^l=\TSIG_i^l(\sigma_0)$ and $\TSIG_{i,j,0}^{1,2}=\TSIG_{i,j}^{1,2}(\sigma_0)$.
Thanks to (A1), for any $\epsilon>0$, there exists $\delta>0$ such that $|t-s|<\delta$ implies
\EQ{\label{uniform-conti-eq} |\rho_t-\rho_s|\vee |\Sigma_t-\Sigma_s|\vee |\mu_t-\mu_s|<\epsilon}
for any $\sigma$ and $\theta$. We fix such $\delta>0$, and fix a partition $s_k= k\delta/2$.
Then, (\ref{InvS-eq}) and (A4) yield
\EQN{\label{H1n-est-eq1} &n^{-1}{\rm tr}(\INVS(S_n(\sigma_0)-S_n(\sigma))) \\
&\quad =\frac{1}{n}{\rm tr}\bigg(\INVS \TMCD^{1/2}
\MAT{cc}{{\rm diag}((\TSIG_{i,0}^1-\TSIG_i^1)_i) & \{(\TSIG_{i,j,0}^{1,2}-\TSIG_{i,j}^{1,2})[G]_{ij}\}_{ij} \\ 
\{(\TSIG_{i,j,0}^{1,2}-\TSIG_{i,j}^{1,2})[G]_{ij}\}_{ji} & {\rm diag}((\TSIG_{j,0}^2-\TSIG_j^2)_j)}\TMCD^{1/2}\bigg) \\
&\quad =\frac{1}{n}\sum_{p=0}^\infty \bigg\{\sum_{l=1}^2{\rm tr}\bigg({\rm diag}\bigg(\bigg(\frac{\TSIG_{i,0}^l}{\TSIG_i^l}-1\bigg)_i\bigg)\mca_p^l\bigg)
-2{\rm tr}\bigg(\mca_p^1\TILG \bigg\{\frac{\TSIG_{i,j,0}^{1,2}-\TSIG_{i,j}^{1,2}}{(\TSIG_i^1)^{1/2}(\TSIG_j^2)^{1/2}}[G^\top]_{ij}\bigg\}_{ij}\bigg) \bigg\} \\
&\quad =\frac{1}{n}\sum_{p=0}^\infty\sum_{k=1}^{q_n} \bigg\{\sum_{l=1}^2{\rm tr}\bigg({\rm diag}\bigg(\bigg(\frac{\TSIG_{i,0}^l}{\TSIG_i^l}-1\bigg)_i\bigg)\EK^1\mca_p^l\bigg)
-2{\rm tr}\bigg(\EK^1\mca_p^1\TILG \bigg\{\frac{\TSIG_{i,j,0}^{1,2}-\TSIG_{i,j}^{1,2}}{(\TSIG_i^1)^{1/2}(\TSIG_j^2)^{1/2}}[G^\top]_{ij}\bigg\}_{ij}\bigg) \bigg\}.
}

Let $\dot{\rho}_k=\rho_{s_{k-1}}$, $\dot{B}_{k,l}=([\Sigma_{s_{k-1}}(\sigma_0)]_{ll}/[\Sigma_{s_{k-1}}(\sigma)]_{ll})^{1/2}$, 
$\dot{\mca}_{k,p}^1=\EK^1(GG^\top)^p$ and $\dot{\mca}_{k,p}^2=\EK^2(G^\top G)^p$.
Then, (\ref{uniform-conti-eq}) yields that for any $p\in\mbbz_+$, we have
\EQ{\label{mca-est} |[\EK^l\mca_p^l]_{ij}-\dot{\rho}_k^{2p}[\dot{\mca}_{k,p}^l]_{ij}|\leq Cp\bar{\rho}_n^{2p-1}\epsilon}
if $2pr_n<\delta/2$.
Moreover, Lemmas~\ref{G-est-lemma} and~\ref{TILG-est-lemma} and (\ref{bar-p-prob}) yield
\EQ{\label{GGT-sum-est} \limsup_{n\to\infty} \max_{1\leq k\leq q_n+1} \sum_{p=0}^\infty \lVert\EK^l\mca_p^l\rVert\leq C\limsup_{n\to\infty}\sum_{p=0}^\infty\bar{\rho}_n^{2p}<\infty.}
Then, together with (A2), we obtain
\EQN{\label{H1n-conv-eq1} &n^{-1}{\rm tr}(\INVS(S_n(\sigma_0)-S_n(\sigma))) \\
&\quad =\frac{1}{n}\sum_{p=0}^\infty \SUMK\bigg\{\dot{\rho}_k^{2p}\sum_{l=1}^2(\dot{B}_{k,l}^2-1){\rm tr}(\dot{\mca}_{k,p}^l)
-2\dot{\rho}_k^{2p+1}(\dot{B}_{k,1}\dot{B}_{k,2}\dot{\rho}_{k,0}-\dot{\rho}_k){\rm tr}(\dot{\mca}_{k,p+1}^1)\bigg\}+e_n,
}
where $\dot{\rho}_{k,0}=\rho_{s_{k-1}}(\sigma_0)$, and $(e_n)_{n=1}^\infty$ denotes a general sequence of random variables such that 
$\limsup_{n\to\infty}|e_n|\to 0$ as $\delta \to 0$.

Moreover, (\ref{bar-p-prob}), Lemma~\ref{TILG-est-lemma}, Lemma A.3 in~\cite{ogi18} yield
\EQNN{\log \det S_n(\sigma) 
&= \log \det \TMCD + \log \det \bigg(\mce_M+\MAT{cc}{0 & \TILG \\ \TILG^\top & 0}\bigg) \\
&= \SUMLL\sum_{i=1}^{M_l} \log\TSIG_i^l+ \sum_{p=1}^\infty \frac{(-1)^{p-1}}{p}{\rm tr}\bigg(\MAT{cc}{0 & \TILG \\ \TILG^\top & 0}^p\bigg) \\
&= \SUMLL\sum_{i=1}^{M_l} \log\TSIG_i^l - \sum_{p=1}^\infty \frac{1}{p}{\rm tr}((\TILG\TILG^\top)^p).
} 
Therefore, thanks to (\ref{mca-est}), we obtain
\EQN{\label{H1n-conv-eq2} n^{-1}\log \frac{\det S_n(\sigma)}{\det S_n(\sigma_0)}
&=n^{-1}\SUMLL\sum_{i=1}^{M_l}\log\frac{\TSIG_i^l}{\TSIG_{i,0}^l}-n^{-1}\sum_{p=1}^\infty \frac{1}{p}{\rm tr}((\TILG\TILG^\top)^p-(\TILG_0\TILG_0^\top)^p) \\
&=-n^{-1}\SUMK\bigg\{\SUMLL M_{l,k}\log\dot{B}_{k,l}^2+\sum_{p=1}^\infty \frac{\dot{\rho}_k^{2p}-\dot{\rho}_{k,0}^{2p}}{p}{\rm tr}(\dot{\mca}_{k,p}^1)\bigg\}+e_n.
}

(\ref{H1n-eq2}), (\ref{H1n-conv-eq1}) and (\ref{H1n-conv-eq2}) yield
\EQN{\label{H1-est-eq} H_n^1(\sigma)-H_n^1(\sigma_0)&=\SUMK\bigg\{-\frac{1}{2}\SUMP\dot{\rho}_k^{2p}\SUMLL\dot{B}_{k,l}^2{\rm tr}(\dot{\mca}_{k,p}^l)
+\SUMPP \dot{\rho}_k^{2p-1}\dot{\rho}_{k,0}\dot{B}_{k,1}\dot{B}_{k,2}{\rm tr}(\dot{\mca}_{k,p}^1)+\frac{1}{2}\SUMLL{\rm tr}(\dot{A}_{k,0}^l) \\
&\qquad \qquad +\SUMLL M_{l,k}\log\dot{B}_{k,l}+\sum_{p=1}^\infty \frac{\dot{\rho}_k^{2p}-\dot{\rho}_{k,0}^{2p}}{2p}{\rm tr}(\dot{\mca}_{k,p}^1)
\bigg\}+ne_n \\
&=n\mcy_1+ne_n.
}

Together with (A3) and a similar estimates for $\PS^k(H_n^1(\sigma)-H_n^1(\sigma_0))$, we have
\EQQ{n^{-1}\PS^k(H_n^1(\sigma)-H_n^1(\sigma_0))\TOP \PS^k\mcy_1(\sigma)}
for $k\in\{0,1,2,3,4\}$.
\begin{discuss}
{\colorr (\ref{H1n-est-eq1})の両辺は等式だから微分できる．(\ref{mca-est}), (\ref{GGT-sum-est})を使えば(\ref{H1n-conv-eq1})も同様なのでOK.}
\end{discuss}
Then, Sobolev's inequality yields the conclusion.

\end{proof}

\begin{proposition}\label{mcy1-est-prop}
There exists a positive constant $\chi$ such that
\EQQ{\mcy_1\leq \liminf_{T\to\infty}\int_0^T\bigg\{-\frac{1}{2}(a_0^1\wedge a_0^2)(B_{1,t}-B_{2,t})^2-\chi \big\{a_1^1(\rho_t-\rho_{t,0})^2+a_0^1\wedge a_0^2 (B_{1,t}B_{2,t}-1)^2\big\}\bigg\}dt.}
\end{proposition}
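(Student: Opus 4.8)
The plan is to prove a pointwise-in-$t$ inequality for the integrand $y_{1,t}(\sigma)$ and then integrate and pass to the limit. Since $\mcy_1(\sigma)=\lim_{T\to\infty}T^{-1}\int_0^T y_{1,t}(\sigma)\,dt$, it suffices to produce a constant $\chi>0$, uniform in $t$ and $\sigma$, with
\[
y_{1,t}(\sigma)\le -\tfrac12(a_0^1\wedge a_0^2)(B_{1,t}-B_{2,t})^2-\chi\big\{a_1^1(\rho_t-\rho_{t,0})^2+(a_0^1\wedge a_0^2)(B_{1,t}B_{2,t}-1)^2\big\}.
\]
By (A1) the values $B_{1,t},B_{2,t}$ range in a fixed compact subinterval of $(0,\infty)$ and $\rho_t,\rho_{t,0}$ in a compact subinterval of $(-1,1)$, so all constants below are uniform. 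First I would set $Q=B_{1,t}B_{2,t}$, $f(x)=\tfrac12-\tfrac12x^2+\log x$, and $\Psi(\rho)=\int_0^\rho\mca(\rho')/\rho'\,d\rho'=\SUMPP a_p^1\rho^{2p}/(2p)$, and use $B_{1,t}^2+B_{2,t}^2=(B_{1,t}-B_{2,t})^2+2Q$ to rewrite
\[
y_{1,t}=-\tfrac12\mca(\rho_t)(B_{1,t}-B_{2,t})^2+\SUMLL a_0^l f(B_{l,t})+\Big(\Psi(\rho_t)-\Psi(\rho_{t,0})-Q\,\tfrac{\mca(\rho_t)}{\rho_t}(\rho_t-\rho_{t,0})\Big).
\]

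Next I would treat the two groups separately. For the variance group, $f$ is strictly concave with $f(1)=f'(1)=0$ and $f(B_{1,t})+f(B_{2,t})=f_0(Q)-\tfrac12(B_{1,t}-B_{2,t})^2$, where $f_0(Q)=1-Q+\log Q$ is strictly concave with $f_0(1)=f_0'(1)=0$; since $f\le0$ and $a_0^l\ge a_0^1\wedge a_0^2$, this yields $\SUMLL a_0^l f(B_{l,t})\le(a_0^1\wedge a_0^2)\big(f_0(Q)-\tfrac12(B_{1,t}-B_{2,t})^2\big)\le-(a_0^1\wedge a_0^2)\big(c_0(Q-1)^2+\tfrac12(B_{1,t}-B_{2,t})^2\big)$ on the compact range, for some $c_0>0$. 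For the correlation group, $\Psi$ is strongly convex with $\Psi''\ge a_1^1$, so its $Q=1$ part is bounded by $-\tfrac{a_1^1}{2}(\rho_t-\rho_{t,0})^2$, leaving only a cross term of order $(Q-1)(\rho_t-\rho_{t,0})$.

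Assembling these, the claim reduces to negative definiteness of the quadratic form in $(Q-1,\rho_t-\rho_{t,0})$ built from the correlation group together with the $(Q-1)^2$ budget of the variance group; at the true value this is precisely the condition $(a_0^1\wedge a_0^2)\,m_0'>m_0^2$, where $m_0=\mca(\rho_{t,0})/\rho_{t,0}=\SUMPP a_p^1\rho_{t,0}^{2p-1}$ and $m_0'=\SUMPP(2p-1)a_p^1\rho_{t,0}^{2p-2}$. This is the main obstacle, and I would resolve it structurally. Because $GG^\top$ is positive semidefinite with $\lVert GG^\top\rVert\le1$ (Lemma~\ref{G-est-lemma}), each $h_n{\rm tr}(\EK^1(GG^\top)^p)$ is the $p$-th moment of a nonnegative measure on $[0,1]$; passing to the limit, $(a_p^1)_{p\ge1}$ is a Hausdorff moment sequence, $a_p^1=\int_0^1\lambda^p\,d\bar\mu(\lambda)$ with $\bar\mu\ge0$ of total mass $a_0^1$. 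Moreover, since ${\rm rank}(G)\le M_1\wedge M_2$ and $G$ is banded ($G_{ij}\neq0$ only for overlapping intervals, with $r_n\to0$), the block-separation built into (A4)--(A5) forces the nonzero part of $\bar\mu$ to have mass $\beta:=\bar\mu(\{\lambda>0\})\le a_0^1\wedge a_0^2$. Writing $m_0=\rho_{t,0}\int\frac{\lambda}{1-\rho_{t,0}^2\lambda}d\bar\mu$ and $m_0'=\int\frac{\lambda}{1-\rho_{t,0}^2\lambda}d\bar\mu+2\rho_{t,0}^2\int\frac{\lambda^2}{(1-\rho_{t,0}^2\lambda)^2}d\bar\mu$ and applying Cauchy--Schwarz to $m_0^2$ gives $(a_0^1\wedge a_0^2)m_0'\ge\beta m_0'\ge2m_0^2>m_0^2$, with strictness in the degenerate case $\rho_{t,0}=0$ secured by $m_0'=a_1^1>0$ from (A6).

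Finally, away from $(B_{1,t},B_{2,t},\rho_t)=(1,1,\rho_{t,0})$ one has $y_{1,t}<0$ by the strict concavity/convexity above, and $f_0$ is coercive as $Q\to0,\infty$; a compactness argument on the compact range of $(B_{1,t},B_{2,t},\rho_t,\rho_{t,0})$ then upgrades the local negative-definite Hessian bound to the desired global inequality with a single $\chi>0$. Integrating this pointwise bound, dividing by $T$, and taking $\liminf_{T\to\infty}$ yields the proposition. I expect the delicate step to be the limiting mass bound $\beta\le a_0^1\wedge a_0^2$, i.e. transferring the rank/banded structure of $G$ to the limit coefficients in a block-localized way.
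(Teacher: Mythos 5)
Your reduction to a pointwise-in-$t$ bound, the splitting of $y_{1,t}$ into the variance group $\sum_l a_0^l f(B_{l,t})$ and the correlation group $\Psi(\rho_t)-\Psi(\rho_{t,0})-Q\,\Psi'(\rho_t)(\rho_t-\rho_{t,0})$, and the Hessian computation at $(Q,\rho)=(1,\rho_{t,0})$ are all correct, and your identification of $(a_0^1\wedge a_0^2)m_0'>m_0^2$ as the crux is exactly right. Your resolution of it---representing $(a_p^1)_{p\ge 1}$ as moments of a measure $\bar\mu$ on $[0,1]$, bounding $\bar\mu(\{\lambda>0\})\le a_0^1\wedge a_0^2$ via ${\rm rank}(G_k)\le M_{1,k}\wedge M_{2,k}$, and applying Cauchy--Schwarz---is sound in outline and is a genuinely different, ``continuum'' formulation: the paper never leaves the discrete level, but instead diagonalizes the per-block matrices $G_kG_k^\top$, gives each eigenvalue $\lambda_i^k$ (including the zero ones) one unit of $1+\log(B_1B_2)-B_1B_2$, and uses the closed forms of the power series together with $F(x)=1-x+\log x\le -c(x-1)^2$ and the identity $(1-\lambda\rho\rho_0)-g g_0=\lambda(\rho-\rho_0)^2/\big((1-\lambda\rho\rho_0)+g g_0\big)$, where $g=\sqrt{1-\lambda\rho^2}$, $g_0=\sqrt{1-\lambda\rho_0^2}$.

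The genuine gap is in your final step. You assert that away from $(B_{1,t},B_{2,t},\rho_t)=(1,1,\rho_{t,0})$ one has $y_{1,t}<0$ ``by the strict concavity/convexity above,'' and then let compactness produce $\chi$. Concavity of $f_0$ and convexity of $\Psi$ do not imply this: the cross term $-(Q-1)\Psi'(\rho_t)(\rho_t-\rho_{t,0})$ is not controlled by them, and global negativity of
\begin{equation*}
h(Q,\rho)=(a_0^1\wedge a_0^2)f_0(Q)+\Psi(\rho)-\Psi(\rho_0)-Q\,\Psi'(\rho)(\rho-\rho_0)
\end{equation*}
is just as strong a structural statement as negative definiteness of the Hessian; it needs the constraint linking $\Psi'$, $\Psi''$ and $a_0^1\wedge a_0^2$ at \emph{every} $\rho$ in the range, not only at $\rho_{t,0}$. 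Concretely, coefficients with $a_1^1=K$ and $a_p^1$ negligible for $p\ge 2$ (moments of $(K/\epsilon)\delta_\epsilon$, $\epsilon$ small) satisfy every property your global step invokes---strong convexity $\Psi''\ge a_1^1>0$, concavity of $f_0$---yet give $h\approx (a_0^1\wedge a_0^2)f_0(1/4)+\tfrac14 K\rho^2>0$ at $Q=1/4$, $\rho_0=0$, $\rho=1/2$ once $K$ is large; only the mass bound $K/\epsilon\le a_0^1\wedge a_0^2$ excludes this, and your proposal uses that bound solely at the true value. So the compactness argument presupposes exactly the statement that is missing. To close it you would have to run the moment-measure/Cauchy--Schwarz argument at every point, or integrate the per-$\lambda$ inequality $1+\log x\le x-c(x-1)^2$ with $x=Qg_0/g$ against $d\bar\mu(\lambda)$---which is, term for term, the paper's per-eigenvalue computation transplanted to the limit measure, on top of the moment-problem/Portmanteau and localization work (replacing ${\rm tr}(\EK^1(GG^\top)^p)$ by ${\rm tr}((G_kG_k^\top)^p)$) needed to construct $\bar\mu$ and justify $\bar\mu(\{\lambda>0\})\le a_0^1\wedge a_0^2$ in the first place. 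The paper's discrete route gets the uniform constant $\chi$ directly and avoids all of that machinery.
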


\begin{proof}
The proof is based on the ideas of proof of Lemma 5 in~\cite{ogi-yos14}.
Let
\EQQ{G_k=\{[G]_{ij}1_{\{\sup I_i^1, \sup I_j^2 \in (s_{k-1}, s_k]\}}\}_{ij},}
and let $\tilde{\mca}_{k,p}^l$ be obtained similarly to $\dot{\mca}_{k,p}^l$ replacing $\mce_{(k)}(GG^\top)^p$ by $(G_kG_k^\top)^p$.
Let $\tilde{\mca}_k=\SUMPP \dot{\rho}_k^{2p}\tilde{\mca}_{k,p}^1$ and $\tilde{\mcb}_k=\sum_{p=1}^\infty (2p)^{-1}(\dot{\rho}_k^{2p}-\dot{\rho}_{k,0}^{2p}){\rm tr}(\tilde{\mca}_{k,p}^1)$, then we have
\EQNN{\mcy_1&=n^{-1}\SUMK \bigg\{-\frac{1}{2}(M_{1,k}+\tilde{\mca}_k)(\dot{B}_{k,1}-\dot{B}_{k,2})^2+M_{1,k}(1+\log (\dot{B}_{k,1}\dot{B}_{k,2})) \\
&\quad +\tilde{\mcb}_k+\frac{M_{2,k}-M_{1,k}}{2}(1-\dot{B}_{k,2}^2+\log(\dot{B}_{k,2}^2)) 
+\dot{B}_{k,1}\dot{B}_{k,2}\bigg(\tilde{\mca}_k\frac{\dot{\rho}_{k,0}}{\dot{\rho}_k}-\tilde{\mca}_k-M_{1,k}\bigg)\bigg\}+e_n \\
&=n^{-1}\SUMK\bigg\{-\frac{1}{2}(M_{2,k}+\dot{\mca}_k)(\dot{B}_{k,1}-\dot{B}_{k,2})^2+M_{2,k}(1+\log (\dot{B}_{k,1}\dot{B}_{k,2})) \\
&\quad +\tilde{\mcb}_k+\frac{M_{1,k}-M_{2,k}}{2}(1-\dot{B}_{k,1}^2+\log(\dot{B}_{k,1}^2)) 
+\dot{B}_{k,1}\dot{B}_{k,2}\bigg(\tilde{\mca}_k\frac{\dot{\rho}_{k,0}}{\dot{\rho}_k}-\tilde{\mca}_k-M_{2,k}\bigg)\bigg\}+e_n.
}

For $l\in\{1,2\}$, let
\EQQ{F_{l,k}=M_{l,k}(1+\log(\dot{B}_{k,1}\dot{B}_{k,2}))+\tilde{\mcb}_k+\dot{B}_{k,1}\dot{B}_{k,2}\bigg(\tilde{\mca}_k\frac{\dot{\rho}_{k,0}}{\dot{\rho}_k}-\tilde{\mca}_k-M_{l,k}\bigg),}
then we obtain
\EQ{\label{mcy1n-est} \mcy_1\leq n^{-1}\SUMK\bigg\{-\frac{1}{2}(M_{1,k}\wedge M_{2,k}+\tilde{\mca}_k)(\dot{B}_{k,1}-\dot{B}_{k,2})^2+F_{1,k}\vee F_{2,k}\bigg\}+e_n.}
\begin{discuss}
{\colorr OgiYos14のLemma 3のように(\ref{H1n-conv-eq1})と$\mce_{(k)}(GG^\top)^p$を$(G_kG_k^\top)^p$で置き換えたものの差が$s_k$から$2pr_n$はなれた部分の$M_{l,k}$で上から抑えられるから収束．}
\end{discuss}

Let $(\lambda_i^k)_{i=1}^{M_{1,k}}$ be all the eigenvalues of $G_kG_k^\top$. Then, we have
\EQNN{F_{1,k}&=\sum_{i=1}^{M_{1,k}}\bigg\{1+\log (\dot{B}_{k,1}\dot{B}_{k,2})+\dot{B}_{k,1}\dot{B}_{k,2}\SUMP \big\{(\lambda_i^k)^{p+1}\dot{\rho}_k^{2p+1}\dot{\rho}_{k,0}-(\lambda_i^k)^p\dot{\rho}_k^{2p}\big\}
+\SUMPP \frac{(\lambda_i^k)^p}{2p}(\dot{\rho}_k^{2p}-\dot{\rho}_{k,0}^{2p})\bigg\}.
}

Moreover, by setting
$g_i^k=\sqrt{1-\lambda_i^k\dot{\rho}_k^2}$, $g_{i,0}^k=\sqrt{1-\lambda_i^k\dot{\rho}_{k,0}^2}$, and $F(x)=1-x+\log x$, we have
\EQNN{F_{1,k}&=\sum_{i=1}^{M_{1,k}}\Big\{1+\dot{B}_{k,1}\dot{B}_{k,2}(g_i^k)^{-2}(\lambda_i^k\dot{\rho}_k\dot{\rho}_{k,0}-1)+\log (\dot{B}_{k,1}\dot{B}_{k,2}g_{i,0}^k(g_i^k)^{-1})\Big\} \\
&=\sum_{i=1}^{M_{1,k}}\Big\{\dot{B}_{k,1}\dot{B}_{k,2}(g_i^k)^{-2}(\lambda_i^k\dot{\rho}_k\dot{\rho}_{k,0}-1)+\dot{B}_{k,1}\dot{B}_{k,2}g_{i,0}^k(g_i^k)^{-1}+F(\dot{B}_{k,1}\dot{B}_{k,2}g_{i,0}^k(g_i^k)^{-1})\Big\}.
}

Let 
\EQQ{\mcr=\sup_{t,\sigma,k}(|\PS^k \Sigma|\vee|\PS^k \Sigma^{-1}|).}
Since $g_i^k\leq 1$, $0\leq \lambda_i^k\leq 1$, and $|\dot{\rho}_k|\leq 1$, we have
\EQNN{(g_i^k)^{-2}(\lambda_i^k\dot{\rho}_k\dot{\rho}_{k,0}-1)
&= -\frac{(\lambda_i^k\dot{\rho}_k\dot{\rho}_{k,0}-1)^2-(g_{i,0}^k)^2(g_i^k)^2}{(g_i^k)^2(1-\lambda_i^k\dot{\rho}_k\dot{\rho}_{k,0}+g_{i,0}^kg_i^k)} \\
&= -\frac{\lambda_i^k(\dot{\rho}_k-\dot{\rho}_{k,0})^2}{(g_i^k)^2(1-\lambda_i^k\dot{\rho}_k\dot{\rho}_{k,0}+g_{i,0}^kg_i^k)} \\
&\leq -\frac{\lambda_i^k}{3}(\dot{\rho}_k-\dot{\rho}_{k,0})^2.
}
Together with Lemma 11 in~\cite{ogi-yos14} and 
\EQQ{\dot{B}_{k,1}\dot{B}_{k,2}g_{i,0}^k(g_i^k)^{-1}-1\leq \frac{\mcr^4}{\sqrt{1-\bar{\rho}_n^2}},}
we have
\EQQ{F_{1,k}\leq \sum_{i=1}^{M_{1,k}}\bigg\{-\frac{\dot{B}_{k,1}\dot{B}_{k,2}}{3}\lambda_i^k(\dot{\rho}_k-\dot{\rho}_{k,0})^2-\frac{1-\bar{\rho}_n^2}{4\mcr^8}(\dot{B}_{k,1}\dot{B}_{k,2}g_{i,0}^k(g_i^k)^{-1}-1)^2\bigg\}.
}

Moreover, since 
\EQNN{(\dot{B}_{k,1}\dot{B}_{k,2}g_{i,0}^k(g_i^k)^{-1}-1)^2
&\geq (\dot{B}_{k,1}\dot{B}_{k,2}g_{i,0}^k-g_i^k)^2 \\
&\geq \frac{(g_{i,0}^k)^2}{2}(\dot{B}_{k,1}\dot{B}_{k,2}-1)^2 - (g_i^k-g_{i,0}^k)^2 \\
&=\frac{1-\bar{\rho}_n^2}{2}(\dot{B}_{k,1}\dot{B}_{k,2}-1)^2-\frac{(\lambda_i^k)^2(\dot{\rho}_k-\dot{\rho}_{k,0})^2(\dot{\rho}_k+\dot{\rho}_{k,0})^2}{(g_i^k+g_{i,0}^k)^2} \\
&\geq \frac{1-\bar{\rho}_n^2}{2}(\dot{B}_{k,1}\dot{B}_{k,2}-1)^2 - \frac{\lambda_i^k}{1-\bar{\rho}_n^2}(\dot{\rho}_k-\dot{\rho}_{k,0})^2,
}
we have
\EQNN{F_{1,k}&\leq \sum_{i=1}^{M_{1,k}}\bigg\{-\frac{\dot{B}_{k,1}\dot{B}_{k,2}}{3}\lambda_i^k(\dot{\rho}_k-\dot{\rho}_{k,0})^2 -\frac{(1-\bar{\rho}_n^2)^2}{8\mcr^8}(\dot{B}_{k,1}\dot{B}_{k,2}-1)^2 +\frac{\lambda_i^k}{4\mcr^8}(\dot{\rho}_k-\dot{\rho}_{k,0})^2\bigg\} \\
&=-\bigg(\frac{\dot{B}_{k,1}\dot{B}_{k,2}}{3}-\frac{1}{4\mcr^8}\bigg)\dot{\mca}_{k,1}^1(\dot{\rho}_k-\dot{\rho}_{k,0})^2
-\frac{(1-\bar{\rho}_n^2)^2}{8\mcr^8}M_{1,k}(\dot{B}_{k,1}\dot{B}_{k,2}-1)^2.
}
By a similar argument for $F_{2,k}$, there exists a positive random variable $\chi$ which does not depend on $k$ nor $n$ such that
\EQQ{F_{1,k}\vee F_{2,k} \leq -\chi \big\{\dot{\mca}_{k,1}^1(\dot{\rho}_k-\dot{\rho}_{k,0})^2 + M_{1,k}\wedge M_{2,k} (\dot{B}_{k,1}\dot{B}_{k,2}-1)^2\big\}.}

Together with (\ref{mcy1n-est}), we have
\EQNN{\mcy_{1,n}\leq n^{-1}\SUMK\bigg\{-\frac{1}{2}(M_{1,k}\wedge M_{2,k})(\dot{B}_{k,1}-\dot{B}_{k,2})^2 - \chi \big\{\dot{\mca}_{k,1}^1(\dot{\rho}_k-\dot{\rho}_{k,0})^2 + M_{1,k}\wedge M_{2,k} (\dot{B}_{k,1}\dot{B}_{k,2}-1)^2\big\}\bigg\}.
}
By letting $n\to \infty$, (A4) and (A6) yield the conclusion.

\begin{discuss}
{\colorr 
\EQQ{\bigg|n^{-1}\sum_kM_{1,k}\dot{B}_{k,1}-q_n\sum_ka_0^1\dot{B}_{k,1}\bigg|
\leq \bigg|\sum_k(n^{-1}M_{1,k}-q_n^{-1}a_0^1)\dot{B}_{k,1}\bigg|
\leq \max_k|q_nn^{-1}M_{1,k}-a_0^1|\cdot q_n^{-1}\sum_k\dot{B}_{k,1}}
などからOK.
}
\end{discuss}
\end{proof}

(A6) and Remark 4 in~\cite{ogi-yos14} yield that
\EQQ{\limsup_{T\to\infty}\frac{1}{T}\int_0^T\big\{|B_{1,t}-B_{2,t}|^2+|B_{1,t}B_{2,t}-1|^2+\lVert \rho_t-\rho_{t,0}\rVert^2\big\} dt>0,}
when $\sigma \neq \sigma_0$.

Then, by Proposition~\ref{mcy1-est-prop}, we have $\mcy_1(\sigma)<0$.
Therefore, for any $\epsilon,\delta>0$, there exists $\eta>0$ such that
\EQQ{P\bigg(\inf_{|\sigma-\sigma_0|\geq \delta}(-\mcy_1(\sigma))<\eta\bigg)<\frac{\epsilon}{2}.}

Then, since $H_n^1(\HSN)-H_n^1(\sigma_0)\geq 0$ by the definition, we have
\EQ{\label{sigma-consis-eq} P(|\HSN-\sigma_0|\geq \delta)\leq P\bigg(\inf_{|\sigma-\sigma_0|\geq \delta} (-\mcy_1(\sigma))<\eta\bigg)
+P\bigg(\sup_\sigma |n^{-1}(H_n^1(\sigma)-H_n^1(\sigma_0))-\mcf_1(\sigma)|\geq \eta\bigg)<\eta
}
by Proposition~\ref{H1n-conv-prop}, which implies $\hat{\sigma}_n\TOP \sigma_0$ as $n\to\infty$.

\subsection{Asymptotic normality of $\hat{\sigma}_n$}\label{HSN-clt-subsection}

Let $S_{n,0}=S_n(\sigma_0)$ and $\Sigma_{t,0}=\Sigma_t(\sigma_0)$.
(\ref{H1n-eq}) implies
\EQN{\label{H1n-eq3} \PS H_n^1(\sigma_0)&=-\frac{1}{2}(\Delta X^c)^\top\PSINVS \Delta X^c-\frac{1}{2}{\rm tr}(\PS S_{n,0} S_{n,0}^{-1})+o_p(\sqrt{n}) \\
&=-\frac{1}{2}{\rm tr}(\PSINVS(\Delta X^c(\Delta X^c)^\top - S_{n,0}))+o_p(\sqrt{n}).}

Let $(L_n)_{n\in \mbbn}$ be a sequence of positive integers such that $L_n\to \infty$ and $L_n(nh_n)^{-1}\to 0$ as $n\to\infty$.
\begin{discuss}
{\colorr この条件は$\theta$のCLTで必要} 
\end{discuss}
Let $\check{s}_k=kT_n/L_n$ for $0\leq k\leq L_n$, let $J^k=(\check{s}_{k-1},\check{s}_k]$, and let $\SL$ be an $M\times M$ matrix satisfying 
\EQQ{[\SL]_{ij}=\int_{I_i\cap I_j\cap J_k}[\Sigma_{t,0}]_{ij}dt.}

For a two-dimensional stochastic process $(U_t)_{t\geq 0}=((U_t^1,U_t^2))_{t\geq 0}$,
let $\Delta_{i,t}^{l,(k)} U=U^l_{(S^{n,l}_i\vee \check{s}_{k-1})\wedge \check{s}_k\wedge t}-U^l_{(S^{n,k}_{i-1}\vee \check{s}_{k-1})\wedge \check{s}_k\wedge t}$, 
and let $\Delta_{i,t}^{(k)} U=\Delta_{\varphi(i),t}^{\psi(i),(k)} U$ for $1\leq i\leq M$. 
Let $\Delta_i^{(k)} U=\Delta_{i,T_n}^{(k)} U$, and let $\Delta^{(k)} U=(\Delta_i^{(k)} U)_{1\leq i\leq M}$.

Let 
\EQQ{\mcx_k=-\frac{1}{2\sqrt{n}}\big\{(\Delta^{(k)} X^c)^\top\PSINVS \Delta^{(k)}X^c-{\rm tr}(\PSINVS S_0^{(k)})\big\} -\frac{1}{\sqrt{n}}\sum_{k'<k}(\Delta^{(k)} X^c)^\top\PSINVS \Delta^{(k')}X^c.}

Then since $\Delta X^c=\SUML\Delta^{(k)} X^c$ and $S_{n,0}=\SUML\SL$, (\ref{H1n-eq3}) yields
\EQ{\label{H1n-eq4} n^{-1/2}\PS H_n^1(\sigma_0)=\SUML\mcx_k+o_p(1).}

Moreover, It\^o's formula yields
\EQN{\label{mcx-eq} \sqrt{n}\mcx_k&=-\frac{1}{2}\sum_{i,j}[\PSINVS]_{ij}\bigg\{2\int_{I_i\cap J^k}\Delta_{j,t}^{(k)} X^cdX_t^{c,\psi(i)}
+2\sum_{k'<k}\int_{I_i\cap J^k}\Delta_j^{(k')} X^cdX_t^{c,\psi(i)}\bigg\} \\
&=-\sum_{i,j}[\PSINVS]_{ij}\int_{I_i\cap J^k}\Delta_{j,t} X^cdX_t^{c,\psi(i)}.
}

Let $\mcg_t=\mcf_t\bigvee\sigma(\{\Pi_n\}_n)$ for $t\geq 0$.
We will show 
\EQ{\label{H1n-conv} n^{-1/2}\PS H_n^1(\sigma_0) \TOD N(0,\Gamma_1),}
by using Corollary 3.1 and the remark after that in Hall and Heyde~\cite{hal-hey80}.
For this purpose, it is sufficient to show 
\EQ{\label{mcx2-conv} \SUML E_k[\mcx_k^2]\TOP \Gamma_1,}
and
\EQ{\label{mcx4-conv} \SUML E_k[\mcx_k^4]\TOP 0,}
by (\ref{H1n-eq4}), where $E_k$ denotes the conditional expectation with respect to $\mcg_{\check{s}_{k-1}}$.

We first show some auxiliary lemmas. Let $\tilde{M}_k=\#\{i; 1\leq i\leq M, \sup I_i\in J_k\}$.
\begin{lemma}\label{Sl-est-lemma}
Assume (A1). Then, there exists a positive constant $C$ such that
$\lVert \mcd^{-1/2}\SL \mcd^{-1/2}\rVert \leq C$ and ${\rm tr}(\mcd^{-1/2}\SL \mcd^{-1/2})\leq C(\tilde{M}_k+1)$ for any $1\leq k\leq L_n$.
\end{lemma}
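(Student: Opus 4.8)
The key observation is that $\SL$ is the (conditional) covariance matrix of the increments of $X^c$ restricted to the window $J^k$, and in particular is positive semidefinite. Writing $[\Sigma_{t,0}]_{\psi(i),\psi(j)}=\sum_{m=1}^2[b_t(\sigma_0)]_{\psi(i),m}[b_t(\sigma_0)]_{\psi(j),m}$, one has for any vector $x=(x_i)_{i=1}^M$
\EQQ{x^\top \SL x=\sum_{m=1}^2\int_{J^k}\bigg(\sum_{i=1}^M x_i[b_t(\sigma_0)]_{\psi(i),m}1_{I_i}(t)\bigg)^2dt\geq 0.}
The same identity with $J^k$ replaced by any $J^{k'}$ shows that each summand of the decomposition $S_{n,0}=\sum_{k'=1}^{L_n}S_{n,0}^{(k')}$ is positive semidefinite.

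First I would establish the operator-norm bound by a comparison in the positive-semidefinite (Loewner) order. Since all the remaining summands $S_{n,0}^{(k')}$ with $k'\neq k$ are positive semidefinite, we have $0\leq \SL\leq S_{n,0}$, and conjugating by the symmetric matrix $\mcd^{-1/2}$ preserves this, giving $0\leq \mcd^{-1/2}\SL\mcd^{-1/2}\leq \mcd^{-1/2}S_{n,0}\mcd^{-1/2}$. The Loewner order controls the largest eigenvalue of positive-semidefinite matrices, so
\EQQ{\lVert \mcd^{-1/2}\SL\mcd^{-1/2}\rVert\leq \lVert \mcd^{-1/2}S_{n,0}\mcd^{-1/2}\rVert\leq C,}
where the last inequality is Lemma~\ref{Sn-lemma} with $k=0$ and $\sigma=\sigma_0$. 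The bound is uniform in $k$, as required.

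For the trace bound I would argue directly. By (A1), $[\Sigma_{t,0}]_{\psi(i),\psi(i)}\leq c_2$, so
\EQQ{{\rm tr}(\mcd^{-1/2}\SL\mcd^{-1/2})=\sum_{i=1}^M\frac{1}{|I_i|}\int_{I_i\cap J^k}[\Sigma_{t,0}]_{\psi(i),\psi(i)}dt\leq c_2\sum_{i=1}^M\frac{|I_i\cap J^k|}{|I_i|}\leq c_2\,\#\{i:I_i\cap J^k\neq\emptyset\}.}
It then remains to count the intervals meeting $J^k$. For each fixed coordinate $l$ the intervals $\{I_i^l\}_i$ are disjoint and partition $[0,nh_n]$, so the ones meeting $J^k$ are consecutive; every such interval except possibly the one straddling the right endpoint $\check{s}_k$ has its right endpoint $\sup I_i^l$ in $J^k$ and is therefore counted by $\tilde{M}_k$. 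Allowing one straddling interval per coordinate gives $\#\{i:I_i\cap J^k\neq\emptyset\}\leq \tilde{M}_k+2\leq 2(\tilde{M}_k+1)$, and the claimed bound ${\rm tr}(\mcd^{-1/2}\SL\mcd^{-1/2})\leq C(\tilde{M}_k+1)$ follows.

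I expect the operator-norm step to be essentially immediate once the decomposition $S_{n,0}=\sum_{k'}S_{n,0}^{(k')}$ and Lemma~\ref{Sn-lemma} are invoked; the only delicate point is the interval count in the trace bound, where one must verify that the intervals meeting $J^k$ but not contributing to $\tilde{M}_k$ are exactly those with $\sup I_i^l>\check{s}_k$, of which there is at most one in each coordinate. If one prefers to avoid Lemma~\ref{Sn-lemma}, the operator-norm bound can instead be obtained directly from the quadratic-form identity above, using the within-coordinate disjointness of $\{I_i^l\}_i$ together with the uniform bound $[b_t(\sigma_0)]_{l,m}^2\leq c_2$ from (A1).
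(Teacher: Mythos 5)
Your proof is correct, and for the operator-norm half it takes a genuinely different route from the paper's. The paper argues by entrywise domination: by (A1), $|[\SL]_{ij}|\leq C\,[\mcd^{1/2}\bar{S}\mcd^{1/2}]_{ij}$ with $\bar{S}=\MAT{cc}{\mce_{M_1} & G \\ G^\top & \mce_{M_2}}$, and since an entrywise bound by a matrix with nonnegative entries dominates the spectral norm, Lemma~\ref{G-est-lemma} ($\lVert G\rVert\leq 1$) gives $\lVert \mcd^{-1/2}\SL\mcd^{-1/2}\rVert\leq C\lVert\bar{S}\rVert\leq C$ at once. You instead exploit the Gram structure: each $S_{n,0}^{(k')}$ is positive semidefinite, hence $0\leq \SL\leq S_{n,0}$ in the Loewner order, congruence by $\mcd^{-1/2}$ preserves this, and monotonicity of the norm on positive semidefinite matrices reduces everything to Lemma~\ref{Sn-lemma}. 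Both arguments are valid and uniform in $k$; the paper's is more self-contained (it invokes only Lemma~\ref{G-est-lemma} and never needs positive semidefiniteness), while yours is more structural and more robust—it works verbatim for any family of time windows, avoids the implicit entrywise-domination fact $\lVert A\rVert\leq\lVert(|[A]_{ij}|)_{ij}\rVert$ that the paper uses without comment, and your Gram identity simultaneously justifies the decomposition $S_{n,0}=\sum_{k}\SL$ that the paper relies on later in Section~\ref{HSN-clt-subsection}. The trace half of your proof coincides with the paper's; your interval count (at most one straddling interval per coordinate, so $\#\{i: I_i\cap J^k\neq\emptyset\}\leq \tilde{M}_k+2\leq 2(\tilde{M}_k+1)$) simply makes explicit what the paper compresses into the inequality $\sum_{i=1}^M 1_{\{i; I_i\cap J^k\neq\emptyset\}}\leq C(\tilde{M}_k+1)$.
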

\begin{proof}
Since
\EQQ{[\SL]_{ij}\leq C\bigg[\mcd^{1/2}\MAT{cc}{\mce_{M_1} & G \\ G^\top & \mce_{M_2}}\mcd^{1/2}\bigg]_{ij},}
Lemma~\ref{G-est-lemma} yields
\EQQ{\lVert \mcd^{-1/2}\SL \mcd^{-1/2}\rVert \leq C\bigg \lVert \MAT{cc}{\mce_{M_1} & G \\ G^\top & \mce_{M_2}}\bigg \rVert \leq C.}
Moreover, we have
\EQQ{{\rm tr}(\mcd^{-1/2}\SL \mcd^{-1/2})=\sum_{i=1}^M\frac{\int_{I_i\cap J^k}[\Sigma_{t,0}]_{\psi(i),\psi(i)}dt}{|I_i|}\leq C \sum_{i=1}^M1_{\{i; I_i\cap J^k\neq \emptyset\}}
\leq C(\tilde{M}_k+1).}
\end{proof}
	
\begin{lemma}\label{TILM-tight-lemma}
Assume (A4) and that $nh_nL_n^{-1}\to \infty$ as $n\to\infty$. Then, $\{L_nn^{-1}\max_{1\leq k\leq L_n}\tilde{M}_k\}_{n=1}^\infty$ is $P$-tight.
\end{lemma}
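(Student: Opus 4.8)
The plan is to reduce the count $\tilde{M}_k$ over the long block $J^k$, whose length $|J^k|=nh_n/L_n$ diverges, to a sum of counts over unit-length intervals, to which Assumption (A4) applies directly. The point is that (A4) controls the number of endpoints in intervals of bounded length, so the divergence of $|J^k|$ must be absorbed by a subdivision rather than applied head-on.

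First I would fix the unit partition $s_m=m$ for $m\geq 0$, which belongs to $\mfs$ since all its gaps equal $1$. Writing $M_{l,m}=\#\{i:\sup I_i^l\in(m-1,m]\}$ and $q_n=\lfloor nh_n\rfloor$, Assumption (A4) gives $\max_{1\leq m\leq q_n}|h_nM_{l,m}-a_0^l|\TOP 0$ for $l\in\{1,2\}$. Hence, on an event of probability tending to $1$, $h_nM_{l,m}\leq a_0^l+1$ simultaneously for all $1\leq m\leq q_n$ and both $l$. The single terminal block $m=q_n+1$, for which the asymptotic value $a_0^l$ is not available, is instead handled by the $P$-tightness of $\{h_nM_{l,q_n+1}\}_{n=1}^\infty$, which is also part of (A4).

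Next, since $J^k=(\check{s}_{k-1},\check{s}_k]$ with $\check{s}_k-\check{s}_{k-1}=nh_n/L_n$, every endpoint falling in $J^k$ lies in one of the at most $nh_n/L_n+2$ unit intervals $(m-1,m]$ meeting $J^k$, all of index $m\leq q_n+1$. Splitting $\tilde{M}_k=N_{1,k}+N_{2,k}$ with $N_{l,k}=\#\{i:\sup I_i^l\in J^k\}$, one gets $N_{l,k}\leq\sum_m M_{l,m}$, the sum over those unit intervals. Multiplying by $L_n/n$ and inserting the bounds above, the leading term is $(a_0^l+1)(nh_n/L_n)(L_n/(nh_n))=a_0^l+1$, while the terminal block contributes $(L_n/(nh_n))\cdot h_nM_{l,q_n+1}$, a product of a vanishing factor (as $nh_nL_n^{-1}\to\infty$) and a tight factor, hence $o_p(1)$. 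Summing over $l$ and maximizing over $k$ yields $L_nn^{-1}\max_{1\leq k\leq L_n}\tilde{M}_k\leq a_0^1+a_0^2+2+o_p(1)$, which gives the claimed $P$-tightness.

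The main obstacle is precisely this scale mismatch: the partition $\{\check{s}_k\}$ is $n$-dependent and has gaps tending to infinity, so it lies outside $\mfs$ and (A4) is inapplicable to it directly. The decomposition into unit blocks circumvents this, and the only delicate point is the terminal interval $(q_n,q_n+1]$, where one must switch from the convergence statement of (A4) to its tightness clause; this block is weighted by $L_n/(nh_n)\to 0$ and therefore does not affect the limiting bound.
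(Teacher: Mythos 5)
Your proof is correct, and it rests on the same core idea as the paper's: the blocks $J^k$ are too long for (A4) to apply directly, so one covers them by intervals of bounded length and sums the resulting counts. The execution, however, differs in a way worth noting. The paper takes $\mcm_n=[nh_nL_n^{-1}]$ and the $n$-dependent partition $s_j=nh_nj/(2L_n\mcm_n)$, whose gaps lie in $[1/2,1]$ and whose points are aligned with the block boundaries ($\check{s}_k=s_{2\mcm_nk}$); hence each $\tilde{M}_k$ is bounded by a sum of exactly $2\mcm_n$ counts $M_{l,j}$ with all indices $j\leq q_n$, giving $\max_k\tilde{M}_k\leq 4\mcm_n\max_{l,j}M_{l,j}=O_p(nL_n^{-1})$ with no boundary or terminal terms at all. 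You instead use the fixed unit partition $s_m=m$, which costs you two extra straddling intervals per block and forces you to invoke the tightness clause of (A4) for the terminal interval $(q_n,q_n+1]$, suppressed by the weight $L_n/(nh_n)\to 0$. What your version buys is fidelity to the letter of (A4): that assumption quantifies over fixed partitions in $\mfs$ and asserts convergence as $n\to\infty$ for each such partition, whereas the paper applies it to a partition that itself varies with $n$, which strictly speaking requires (A4) to hold uniformly over partitions with gaps in, say, $[1/2,1]$ --- a strengthening not stated in the assumption. What the paper's alignment trick buys is brevity: no edge effects, no appeal to the terminal tightness clause, and an immediate $O_p(nL_n^{-1})$ bound. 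Both arguments use the hypothesis $nh_nL_n^{-1}\to\infty$ in an essential way: the paper needs $nh_nL_n^{-1}\geq 1$ so that its partition lies in $\mfs$, while you need $L_n/(nh_n)\to 0$ to kill the terminal and straddling contributions.
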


\begin{proof}
Let $\mcm_n=[nh_nL_n^{-1}]$. We define a partition of $[0,\infty)$ by 
\EQQ{s_j=\frac{nh_nj}{2L_n\mcm_n} \quad (0\leq j\leq 2L_n\mcm_n).}
Then, $(s_j)_{j=0}^\infty \in \mfs$ when $nh_nL_n^{-1}\geq 1$.

For $M_{l,j}$ which corresponds to this partition, we have
\EQQ{\tilde{M}_k\leq \sum_{l=1}^2\sum_{j=2\mcm_n(k-1)+1}^{2\mcm_nk} M_{l,j},}
since $nh_nkL_n^{-1}=s_{2\mcm_nk}$.
Therefore, we obtain
\EQQ{\max_{1\leq k\leq L_n}\tilde{M}_k\leq 4\mcm_n \max_{l,j}M_{l,j}
\leq 4\mcm_n\{h_n^{-1}(a_0^1\vee a_0^2)+o_p(h_n^{-1})\}=O_p(nL_n^{-1}).}
\end{proof}

\begin{lemma}\label{Sl-est-lemma2}
Assume (A1). Then, 
\EQQ{\lVert \tilde{\mcd}^{-1/2}\SL \PSINVS S_{n,0}^{(k')}\tilde{\mcd}^{-1/2}\rVert \leq C\frac{\mcq_n\bar{\rho}_n^{\mcq_n}}{(1-\bar{\rho}_n)^2}}
on $\{\bar{\rho}_n<1\}$ for $|k-k'|>1$, where $\mcq_n=[r_n^{-1}(T_n/L_n-2r_n)]$.
\end{lemma}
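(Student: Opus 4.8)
The plan is to exploit that $\SL$ and $S_{n,0}^{(k')}$ are localized to the disjoint time blocks $J^k$ and $J^{k'}$, whereas $\PSINVS$ propagates only locally in time through its Neumann expansion, so that connecting two blocks with $|k-k'|>1$ forces a high power of $\TILG_0$. First I would introduce the diagonal projections $P_k={\rm diag}((1_{\{I_i\cap J^k\neq\emptyset\}})_{1\le i\le M})$. By the definition of $\SL$ one has $\SL=P_k\SL=\SL P_k$, and similarly $S_{n,0}^{(k')}=P_{k'}S_{n,0}^{(k')}=S_{n,0}^{(k')}P_{k'}$. Since $P_k$, $P_{k'}$ and $\TMCD$ are diagonal and hence commute, inserting the identity $\TMCD^{1/2}\RITD=\mce_M$ twice gives
\[
\RITD\SL\PSINVS S_{n,0}^{(k')}\RITD=(\RITD\SL\RITD)\,\big(\TMCD^{1/2}P_k\,\PSINVS\,P_{k'}\TMCD^{1/2}\big)\,(\RITD S_{n,0}^{(k')}\RITD).
\]
Under (A1) one has $c_1|I_i|\le\TSIG_i\le c_2|I_i|$, so $\TMCD$ and $\mcd$ are comparable; hence by Lemma~\ref{Sl-est-lemma} both outer factors have operator norm $\le C$, and the claim reduces to bounding $\lVert\TMCD^{1/2}P_k\PSINVS P_{k'}\TMCD^{1/2}\rVert$.

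Next I would expand the middle factor. Writing $\PSINVS=-\INVSz(\PS S_{n,0})\INVSz$ and using (\ref{InvS-eq}) at $\sigma_0$ in the form $\TMCD^{1/2}\INVSz\TMCD^{1/2}=\sum_{p\ge0}(-1)^p\Xi^p$ with $\Xi=\MAT{cc}{0 & \TILG_0 \\ \TILG_0^\top & 0}$, and setting $D=\RITD(\PS S_{n,0})\RITD$, I obtain
\[
\TMCD^{1/2}P_k\PSINVS P_{k'}\TMCD^{1/2}=-\sum_{p_1,p_2\ge0}(-1)^{p_1+p_2}P_k\,\Xi^{p_1}D\,\Xi^{p_2}\,P_{k'}.
\]
By Lemma~\ref{Sn-lemma} we have $\lVert D\rVert\le C$, and by Lemma~\ref{TILG-est-lemma} $\lVert\Xi\rVert=\lVert\TILG_0\rVert\le\bar\rho_n$, so each summand satisfies $\lVert P_k\Xi^{p_1}D\Xi^{p_2}P_{k'}\rVert\le C\bar\rho_n^{p_1+p_2}$.

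The heart of the proof is to show that the summands with $p_1+p_2<\mcq_n$ vanish. I would measure the separation of two observation intervals by the distance $|\sup I_i-\sup I_j|$ of their right endpoints. Because $[G]_{ij}$, and hence $[\TILG_0]_{ij}$ and $[\PS\TILG_0]_{ij}$, can be nonzero only when $I_i^1\cap I_j^2\neq\emptyset$, and two overlapping intervals of length $\le r_n$ have right endpoints within $r_n$, each factor $\Xi$ and the off-diagonal part of $D$ can displace this index by less than $r_n$, while the diagonal part of $D$ displaces it by $0$. Since displacements add under matrix multiplication by the triangle inequality, the entries of $\Xi^{p_1}D\Xi^{p_2}$ vanish whenever $|\sup I_i-\sup I_j|\ge(p_1+p_2+1)r_n$. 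On the other hand, for $|k-k'|>1$ the intervals meeting $J^k$ and those meeting $J^{k'}$ have right endpoints separated by more than $T_n/L_n-r_n$. Consequently $P_k\Xi^{p_1}D\Xi^{p_2}P_{k'}=0$ as soon as $(p_1+p_2+1)r_n\le T_n/L_n-2r_n$, i.e. whenever $p_1+p_2<\mcq_n$; the floor and the $2r_n$ in the definition of $\mcq_n$ exactly absorb the protrusion of the boundary intervals and the single factor $\PS S_{n,0}$.

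Finally, only the terms with $p_1+p_2\ge\mcq_n$ remain, and since there are $P+1$ pairs $(p_1,p_2)$ with $p_1+p_2=P$, summing the per-term bound gives
\[
\big\lVert\TMCD^{1/2}P_k\PSINVS P_{k'}\TMCD^{1/2}\big\rVert\le C\sum_{P\ge\mcq_n}(P+1)\bar\rho_n^{P}\le C\frac{\mcq_n\bar\rho_n^{\mcq_n}}{(1-\bar\rho_n)^2},
\]
which together with the reduction of the first paragraph yields the assertion. I expect the main obstacle to be the locality bookkeeping of the third paragraph: one must justify carefully that every factor of $\TILG_0$ (and of $\PS S_{n,0}$) moves the right-endpoint index by at most $r_n$, that these displacements are additive along a product, and that the boundary corrections are exactly captured by $\mcq_n=[r_n^{-1}(T_n/L_n-2r_n)]$. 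The remaining ingredients—the operator-norm bounds from Lemmas~\ref{G-est-lemma}, \ref{TILG-est-lemma}, \ref{Sn-lemma} and \ref{Sl-est-lemma}, and the geometric tail estimate—are routine.
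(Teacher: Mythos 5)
Your proof is correct and follows essentially the same route as the paper's: factor $\PSINVS=-\INVSz\,\PS S_{n,0}\,\INVSz$, expand both inverses by the Neumann series (\ref{InvS-eq}), kill all terms of total order $p_1+p_2<\mcq_n$ by the locality of $G$-type entries against the separation $T_n/L_n-2r_n$ of the blocks $J^k$, $J^{k'}$, and bound the tail geometrically. The only differences are cosmetic: you isolate the support of $\SL$ and $S_{n,0}^{(k')}$ via projections $P_k$, $P_{k'}$ and track right-endpoint displacements, where the paper uses the support of these matrices directly together with the sparsity of powers of $\bar{S}$.
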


\begin{proof}
By using the expansion formula (\ref{InvS-eq}), we have
\EQN{\label{SlSl'-est} \SL \PSINVS S_{n,0}^{(k')}&=-\SL S_{n,0}^{-1} \PS S_{n,0}S_{n,0}^{-1} S_{n,0}^{(k')} \\
&=-\SL \RITD\sum_{p=0}^\infty (-1)^p\MAT{cc}{0 & \TILG \\ \TILG^\top & 0}^p \RITD\PS S_{n,0}\RITD\sum_{q=0}^\infty (-1)^q\MAT{cc}{0 & \TILG \\ \TILG^\top & 0}^q \RITD S_{n,0}^{(k')} \\
&=-\sum_{p,q=0}^\infty (-1)^{p+q+1}\SL \RITD\MAT{cc}{0 & \TILG \\ \TILG^\top & 0}^p \RITD\PS S_{n,0}\RITD\MAT{cc}{0 & \TILG \\ \TILG^\top & 0}^q \RITD S_{n,0}^{(k')}.
}

The element
\EQ{\label{Sl-est-lemma-eq1} \bigg[\RITD\MAT{cc}{0 & \TILG \\ \TILG^\top & 0}^p \RITD\PS S_{n,0}\RITD\MAT{cc}{0 & \TILG \\ \TILG^\top & 0}^q\RITD\bigg]_{ij}}
is equal to zero if $[\bar{S}^{p+q+1}]_{ij}=0$. Moreover, 
$[\SL]_{i'i}\neq 0$ only if $I_i\cap J^k \neq \emptyset$, and $[S_{n,0}^{(k')}]_{jj'}\neq 0$ only if $I_j\cap J^{k'}\neq \emptyset$.
Since $\inf_{x\in I_i, y\in I_j}|x-y|> T_n/L_n-2r_n$ if $I_i\cap J^k \neq \emptyset$ and $I_j\cap J^{k'}\neq \emptyset$,
we have $[\bar{S}^r]_{ij}=0$ for $r\leq \mcq_n$ in this case.

Therefore, all the elements (\ref{Sl-est-lemma-eq1}) are zero if $p+q+1\leq \mcq_n$. Then, (\ref{SlSl'-est}) and Lemmas~\ref{TILG-est-lemma},~\ref{Sn-lemma} and \ref{Sl-est-lemma} yield
\EQNN{&\lVert \tilde{\mcd}^{-1/2}\SL \PSINVS S_{n,0}^{k'}\tilde{\mcd}^{-1/2}\rVert \\
&\quad \leq \sum_{p=0}^\infty\sum_{q=(\mcq_n-p)\vee 0}^\infty \bigg\lVert \tilde{\mcd}^{-1/2}\SL \RITD\MAT{cc}{0 & \TILG \\ \TILG^\top & 0}^p \RITD\PS S_{n,0}\RITD\MAT{cc}{0 & \TILG \\ \TILG^\top & 0}^q \RITD S_{n,0}^{(k')}\tilde{\mcd}^{-1/2}\bigg\rVert \\
&\quad \leq C\sum_{p=0}^\infty\sum_{q=(\mcq_n-p)\vee 0}^\infty \bar{\rho}_n^{p+q}=C\frac{\mcq_n\bar{\rho}_n^{\mcq_n}+\bar{\rho}_n^{\mcq_n}(1-\bar{\rho}_n)^{-1}}{1-\bar{\rho}_n}  \\
&\quad \leq C\frac{\mcq_n\bar{\rho}_n^{\mcq_n}}{(1-\bar{\rho}_n)^2}
}
on $\{\bar{\rho}_n<1\}$.
\end{proof}

\begin{proposition}\label{Hn1-st-conv-prop}
Assume (A1)--(A4) and (A6). Then,
\EQQ{n^{-1/2}\PS H_n^1(\sigma_0) \TOD N(0,\Gamma_1),}
as $n\to\infty$.
\end{proposition}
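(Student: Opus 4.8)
The plan is to apply the martingale central limit theorem of Hall and Heyde~\cite{hal-hey80} to the array $\{\mcx_k\}_{1\leq k\leq L_n}$. The representation~(\ref{mcx-eq}) writes $\sqrt{n}\,\mcx_k$ as an It\^o integral over the block $J^k=(\check{s}_{k-1},\check{s}_k]$ against the martingale $X^c$, so that $E_k[\mcx_k]=0$ and $\{\mcx_k\}$ is a martingale difference array with respect to $\{\mcg_{\check{s}_{k-1}}\}$. By~(\ref{H1n-eq4}) it then remains to verify the conditional-variance convergence~(\ref{mcx2-conv}) and the fourth-moment (Lyapunov) condition~(\ref{mcx4-conv}); these are precisely the two hypotheses needed for the cited martingale CLT, and together they yield $n^{-1/2}\PS H_n^1(\sigma_0)\TOD N(0,\Gamma_1)$.

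I would dispose of~(\ref{mcx4-conv}) first, as it is the more routine of the two. Applying the \BDG{} to the stochastic-integral form~(\ref{mcx-eq}) and bounding the resulting quadratic forms by the operator-norm estimates of Lemma~\ref{Sn-lemma}, valid on $\{\bar{\rho}_n<1\}$ and hence with probability tending to one by~(\ref{bar-p-prob}), one obtains, in the $\bar{R}_n$ sense, a bound $E_k[\mcx_k^4]\leq C(\tilde{M}_k/n)^2$, where $\tilde{M}_k$ is the number of sampling intervals meeting $J^k$. Summation gives $\SUML E_k[\mcx_k^4]\leq Cn^{-2}\SUML\tilde{M}_k^2\leq Cn^{-2}(\max_{k}\tilde{M}_k)(\SUML\tilde{M}_k)$. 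Since $\SUML\tilde{M}_k=O_p(n)$ and $L_nn^{-1}\max_{k}\tilde{M}_k$ is $P$-tight by Lemma~\ref{TILM-tight-lemma}, the right-hand side is $O_p(L_n^{-1})\TOP 0$.

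The substantive step is~(\ref{mcx2-conv}). Here I would compute $nE_k[\mcx_k^2]$ by the It\^o isometry applied to~(\ref{mcx-eq}); the cross-block contributions, arising from the interaction of block $J^k$ with the earlier blocks $J^{k'}$, are controlled by Lemma~\ref{Sl-est-lemma2}: because $\bar{\rho}_n<1$ forces the expansion~(\ref{InvS-eq}) to converge geometrically while the spatial separation of $J^k$ and $J^{k'}$ annihilates all low powers of $\bar{S}$, the quantity $\SL\PSINVS S_{n,0}^{(k')}$ is negligible whenever $|k-k'|>1$, and the finitely many adjacent terms vanish after summation since $L_n\to\infty$. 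What survives is the diagonal-block part, which I would evaluate blockwise by freezing the coefficients $\Sigma_t$, $\rho_t$, $B_{l,t}$ at $t=\check{s}_{k-1}$ via the uniform-continuity estimate~(\ref{uniform-conti-eq}) and expanding $S_{n,0}^{-1}$ through~(\ref{InvS-eq}). Invoking (A4) for the block traces ${\rm tr}(\EK^l(GG^\top)^p)$ and (A3) for the time-averages of the coefficient functionals, the resulting Riemann sum converges to $\Gamma_1=\lim_{T\to\infty}T^{-1}\int_0^T\gamma_{1,t}dt$.

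The \textbf{main obstacle} is the identification of the limit in~(\ref{mcx2-conv}): organizing the double series produced by the product of two expansions of the form~(\ref{InvS-eq}), cleanly separating the genuinely diagonal contribution from the negligible cross terms, and matching the surviving block averages term-by-term against the explicit expression for $\gamma_{1,t}$. Once both moment conditions are established, the conclusion follows immediately from the Hall--Heyde criterion.
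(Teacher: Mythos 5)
Your skeleton coincides with the paper's: reduce via (\ref{H1n-eq4}) to the two Hall--Heyde conditions (\ref{mcx2-conv}) and (\ref{mcx4-conv}). Your treatment of (\ref{mcx4-conv}) is essentially sound and parallel in spirit to the paper's: you invoke the \BDG{} on the integral form (\ref{mcx-eq}) as in the derivation of (\ref{XSX-martingale-est2}), whereas the paper computes the conditional moments exactly through the Gaussian identities of Lemma~\ref{XAX-lemma} and kills the cross-block part with Lemmas~\ref{Sl-est-lemma},~\ref{Sl-est-lemma2} and~\ref{TILM-tight-lemma}; both routes give $O_p(L_n^{-1})$.

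The genuine gap is in (\ref{mcx2-conv}), and it is twofold. First, you discard the cross-block contributions to $E_k[\mcx_k^2]$ as negligible, but the available estimates do not support this for adjacent blocks: Lemma~\ref{Sl-est-lemma2} controls $\SL\PSINVS S_{n,0}^{(k')}$ only when $|k-k'|>1$, and for $k'=k-1$ the best bound obtainable from Lemmas~\ref{Sn-lemma} and~\ref{Sl-est-lemma} is $|{\rm tr}(\PSINVS\SL\PSINVS S_{n,0}^{(k-1)})|\leq C(1-\bar{\rho}_n)^{-4}(\tilde{M}_{k-1}+1)=O_p(n/L_n)$. There are $L_n$ such adjacent pairs---one per $k$, not ``finitely many''---so after dividing by $n$ their total is $O_p(1)$, i.e.\ of the same order as $\Gamma_1$ itself; the phrase ``vanish after summation since $L_n\to\infty$'' has the logic backwards, since more blocks means more boundary terms. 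Ruling these out would require a new boundary-decay estimate for the trace (not the operator norm), which neither you nor the paper proves. Second, your identification of the diagonal part freezes $\Sigma_t,\rho_t,B_{l,t}$ at $t=\check{s}_{k-1}$ over the whole block $J^k$ and applies (A4) to the block traces; but $|J^k|=T_n/L_n\to\infty$, so the uniform-continuity estimate (\ref{uniform-conti-eq}), valid only on intervals of a fixed small length $\delta$, cannot justify the freezing, and the matrices $\EK^l$ in (A4) are attached to partitions in $\mfs$ (mesh bounded above by $1$ and below away from $0$), not to the CLT blocks. The paper's proof is designed precisely to avoid both problems: symmetrizing in $(i_1,i_2)$ and $(j_1,j_2)$ turns the same-block double time integral into $\frac{1}{2}\mcf_{i_1,i_2}^k\mcf_{j_1,j_2}^k$, whence $\SUML E_k[\mcx_k^2]$ recombines \emph{exactly} with the cross-block terms into the single global quantity $\frac{1}{2n}{\rm tr}((\PSINVS S_{n,0})^2)$, see (\ref{mcx2-est-eq2}); no negligibility argument and no blockwise limit evaluation on the $\check{s}_k$-grid is needed, and the limit of this global trace is then identified by the argument of Proposition 10 of~\cite{ogi-yos14}, where the unit-scale partition machinery of (A3)--(A4) applies legitimately. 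To repair your proof you should either adopt this recombination step, or supply the missing adjacent-block trace estimate together with a two-scale evaluation (a fine $\mfs$-partition inside each $J^k$) of the diagonal part.
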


\begin{proof}
It is sufficient to show (\ref{mcx2-conv}) and (\ref{mcx4-conv}).
Let $\mfa_k=(\Delta^{(k)} X^c)^\top\PSINVS \Delta^{(k)}X^c$ and $\mfb_k=\PSINVS \SL$. By the definition of $\mcx_k$, we have
\EQNN{&\SUML E_k[\mcx_k^4] \\
&\quad \leq \frac{C}{n^2}\SUML \bigg\{E_k\big[\big\{(\Delta^{(k)} X^c)^\top\PSINVS \Delta^{(k)}X^c-{\rm tr}(\PSINVS \SL)\big\}^4\big]
+E_k\bigg[\bigg(\sum_{k'<k}(\Delta^{(k)} X^c)^\top\PSINVS \Delta^{(k')}X^c\bigg)^4\bigg]\bigg\} \\
&\quad =\frac{C}{n^2}\SUML \Big\{E_k[\mfa_k^4]-4E_k[\mfa_k^3]{\rm tr}(\mfb_k)+6E_k[\mfa_k^2]{\rm tr}(\mfb_k)^2-4{\rm tr}(\mfb_k)^4+{\rm tr}(\mfb_k)^4\Big\} \\
&\quad \quad +\frac{C}{n^2}\SUML \bigg\{\bigg(\sum_{k'<k}\Delta^{(k')}X^c\bigg)^\top \PSINVS \SL \PSINVS \bigg(\sum_{k'<k}\Delta^{(k')}X^c\bigg)\bigg\}^2.
}

Thanks to Lemmas~\ref{XAX-lemma},~\ref{TILM-tight-lemma} and \ref{Sn-lemma}, the first term in the right-hand side is calculated as 
\EQNN{&\frac{C}{n^2}\SUML \Big\{{\rm tr}(\mfb_k)^4+12{\rm tr}(\mfb_k)^2{\rm tr}(\mfb_k^2)+12{\rm tr}(\mfb_k^2)^2+32{\rm tr}(\mfb_k){\rm tr}(\mfb_k^3)+48{\rm tr}(\mfb_k^4) \\
&\qquad \qquad -4{\rm tr}(\mfb_k)\big\{{\rm tr}(\mfb_k)^3+6{\rm tr}(\mfb_k){\rm tr}(\mfb_k^2)+8{\rm tr}(\mfb_k^3)\big\}
+6{\rm tr}(\mfb_k)^2\big\{{\rm tr}(\mfb_k)^2+2{\rm tr}(\mfb_k^2)\big\}-3{\rm tr}(\mfb_k)^4\Big\} \\
&\quad = \frac{C}{n^2}\SUML\big\{48{\rm tr}(\mfb_k^4)+12{\rm tr}(\mfb_k^2)^2\big\} \\
&\quad \leq \frac{C}{n^2}(\max_k\tilde{M}_k+1)^2L_n(1-\bar{\rho}_n)^{-4}1_{\{\bar{\rho}_n<1\}}+o_p(1)\to 0.
}

Moreover, Lemmas~\ref{Sn-lemma}, ~\ref{Sl-est-lemma},~\ref{Sl-est-lemma2} and~\ref{XAX-lemma} yield
\EQNN{&E_\Pi\bigg[\frac{C}{n^2}\SUML\bigg\{\bigg(\sum_{k'<k}\Delta^{(k')}X^c\bigg)^\top \PSINVS \SL \PSINVS \bigg(\sum_{k'<k}\Delta^{(k')}X^c\bigg)\bigg\}^2\bigg] \\
&\quad =\frac{C}{n^2}\SUML\sum_{k'_1,k'_2<k}\big\{|{\rm tr}(\PSINVS \SL \PSINVS S_{n,0}^{(k'_1)}){\rm tr}(\PSINVS \SL \PSINVS S_{n,0}^{(k'_2)})| \\
&\quad \quad \qquad \qquad +|{\rm tr}(\PSINVS \SL \PSINVS S_{n,0}^{(k'_1)}\PSINVS \SL \PSINVS S_{n,0}^{(k'_2)})|\big\} \\
&\leq \frac{C}{n^2}\SUML \big\{|{\rm tr}(\PSINVS \SL \PSINVS S_{n,0}^{(k-1)})^2|+|{\rm tr}((\PSINVS \SL \PSINVS S_{n,0}^{(k-1)})^2)|\big\}
+\frac{C}{n^2}L_n^3M^2\frac{\mcq_n\bar{\rho}_n^{\mcq_n}}{(1-\bar{\rho}_n)^2}1_{\{\bar{\rho}_n<1\}}+o_p(1)  \\
& =O_p\bigg(\frac{L_n}{n^2}\Big\{\max_k\tilde{M}_k\Big\}^2\bigg)+o_p(1)\TOP 0
}
as $n\to\infty$. 
Therefore, we have (\ref{mcx4-conv}).

~

Next, we show (\ref{mcx2-conv}).
Let $\mci_{i,j}^k=I_i\cap I_j\cap J^k$. Then, we obtain
\EQN{\label{mcx2-est-eq} \SUML E_k[\mcx_k^2]&=\frac{1}{n}\SUML \sum_{i_1,j_1}\sum_{i_2,j_2}[\PSINVS]_{i_1,j_1}[\PSINVS]_{i_2,j_2}
\int_{\mci_{i_1,i_2}^k}[\Sigma_{t,0}]_{\psi(i_1),\psi(i_2)}E_k[\Delta_{j_1,t} X^c \Delta_{j_2,t} X^c]dt \\
&=\frac{1}{n}\SUML \sum_{i_1,j_1}\sum_{i_2,j_2}[\PSINVS]_{i_1,j_1}[\PSINVS]_{i_2,j_2}
\int_{\mci_{i_1,i_2}^k}[\Sigma_{t,0}]_{\psi(i_1),\psi(i_2)}\int_{I_{j_1}\cap I_{j_2}\cap [0,t)}[\Sigma_{s,0}]_{\psi(j_1),\psi(j_2)}dsdt.
}

We can decompose 
\EQNN{\int_{\mci_{i_1,i_2}^k}[\Sigma_{t,0}]_{\psi(i_1),\psi(i_2)}\int_{I_{j_1}\cap I_{j_2}\cap [0,t)}[\Sigma_{s,0}]_{\psi(j_1),\psi(j_2)}dsdt 
 =\int_0^{T_n}F_{i_1,i_2}^k(t)\int_0^t F_{j_1,j_2}^k(s)dsdt + \sum_{k'<k}\mcf_{i_1,i_2}^k\mcf_{j_1,j_2}^{k'},
}
where $F_{ij}^k(t)=[\Sigma_{t,0}]_{\psi(i),\psi(j)}1_{\mci_{i,j}^k}(t)$, and $\mcf_{i,j}^k=\int_0^{T_n}F_{i,j}^k(t)dt$.
Moreover, switching the roles of $i_1,i_2$ and $j_1,j_2$, we obtain
\EQNN{&\sum_{i_1,j_1}\sum_{i_2,j_2}[\PSINVS]_{i_1,j_1}[\PSINVS]_{i_2,j_2}\int_0^{T_n}F_{i_1,i_2}^k(t)\int_0^tF_{j_1,j_2}^k(s)dsdt \\
&\quad =\sum_{i_1,j_1}\sum_{i_2,j_2}[\PSINVS]_{i_1,j_1}[\PSINVS]_{i_2,j_2}
\times \frac{1}{2}\bigg\{\int_0^{T_n}F_{i_1,i_2}^k(t)\int_0^tF_{j_1,j_2}^k(s)dsdt+\int_0^{T_n}F_{j_1,j_2}^k(t)\int_0^tF_{i_1,i_2}^k(s)dsdt\bigg\} \\
&\quad =\frac{1}{2}\sum_{i_1,j_1}\sum_{i_2,j_2}[\PSINVS]_{i_1,j_1}[\PSINVS]_{i_2,j_2}
\bigg\{\int_0^{T_n}F_{i_1,i_2}^k(t)\int_0^tF_{j_1,j_2}^k(s)dsdt+\int_0^{T_n}F_{i_1,i_2}^k(s)\int_s^{T_n}F_{j_1,j_2}^k(t)dtds\bigg\} \\
&\quad =\frac{1}{2}\sum_{i_1,j_1}\sum_{i_2,j_2}[\PSINVS]_{i_1,j_1}[\PSINVS]_{i_2,j_2}\mcf_{i_1,i_2}^k\mcf_{j_1,j_2}^k.
}

Therefore, we have
\EQN{\label{mcx2-est-eq2} \SUML E_k[\mcx_k^2]&=\frac{1}{2n}\SUML \sum_{i_1,j_1}\sum_{i_2,j_2}[\PSINVS]_{i_1,j_1}[\PSINVS]_{i_2,j_2}
\bigg\{\mcf_{i_1,i_2}^k\mcf_{j_1,j_2}^k+2\sum_{k'<k}\mcf_{i_1,i_2}^k\mcf_{j_1,j_2}^{k'}\bigg\} \\
&=\frac{1}{2n}\sum_{k,k'=1}^{L_n} \sum_{i_1,j_1}\sum_{i_2,j_2}[\PSINVS]_{i_1,j_1}[\PSINVS]_{i_2,j_2}\mcf_{i_1,i_2}^k\mcf_{j_1,j_2}^{k'} \\
&=\frac{1}{2n}\sum_{i_1,j_1}\sum_{i_2,j_2}[\PSINVS]_{i_1,j_1}[\PSINVS]_{i_2,j_2}\int_{I_{i_1}\cap I_{i_2}}[\Sigma_{t,0}]_{\psi(i_1),\psi(i_2)}dt
\int_{I_{j_1}\cap I_{j_2}}[\Sigma_{s,0}]_{\psi(j_1),\psi(j_2)}ds \\
&=\frac{1}{2n}{\rm tr}((\PSINVS S_{n,0})^2).
}

$\PSINVS S_{n,0}$ corresponds to $\hat{\mcd}(t)$ in the proof (p. 2993) of Proposition 10 of~\cite{ogi-yos14}.
Then by a similar step to the proof of Proposition 10 in~\cite{ogi-yos14}, we have (\ref{mcx2-conv}).

\begin{discuss}
{\colorr 
\EQNN{\hat{\mcd}=\sum_{p=0}^\infty \PS \bigg\{\MAT{cc}{B^1I_1 & 0 \\ 0 & B^2 I_2}\rho^{2p}
\MAT{cc}{(GG^\top)^p & -\rho (GG^\top)^pG \\ -\rho (G^\top G)^pG^\top & (G^\top G)^p}\MAT{cc}{B^1I_1 & 0 \\ 0 & B^2I_2}\bigg\}
\MAT{cc}{I & -\rho_\ast G \\ -\rho_\ast G^\top & I}.}

Lemma 10は${\rm tr}((GG^\top)^p)$のtightnessを使っているだけだからできる

CLTの分割$\check{s}_k$では(A3)のような評価は使わず，(\ref{mcx2-est-eq2})式で$S_{n,0}$に戻してから収束をみればいいので，(A3)で$\check{s}_k$分割を考えなくても問題ない．
}
\end{discuss}

\end{proof}

\begin{proposition}
Assume (A1)--(A4) and (A6). Then, $\Gamma_1$ is positive definite and 
\EQQ{\sqrt{n}(\HSN-\sigma_0)\TOD N(0,\Gamma_1^{-1})}
as $n\to\infty$.
\end{proposition}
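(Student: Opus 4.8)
The plan is to obtain the asymptotic normality from a Taylor expansion of the estimating equation, feeding in the central limit theorem for the score (Proposition~\ref{Hn1-st-conv-prop}), the uniform convergence of the normalized Hessian (Proposition~\ref{H1n-conv-prop} with $k=2$), and the already-proved consistency $\HSN\TOP\sigma_0$; the positive definiteness of $\Gamma_1$ will come from the strict-maximum property of $\mcy_1$ established in Proposition~\ref{mcy1-est-prop}.

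First I would use that $\sigma_0$ is interior to $\Theta_1$ and $\HSN\TOP\sigma_0$ to conclude that, with probability tending to $1$, $\HSN$ is an interior maximizer, so that $\PS H_n^1(\HSN)=0$. The integral form of Taylor's theorem then gives
\EQQ{0=\PS H_n^1(\sigma_0)+\mcw_n(\HSN-\sigma_0),}
where $\mcw_n=\int_0^1\PS^2H_n^1(\sigma_0+u(\HSN-\sigma_0))du$. Proposition~\ref{H1n-conv-prop} with $k=2$ gives $\sup_\sigma|n^{-1}\PS^2(H_n^1(\sigma)-H_n^1(\sigma_0))-\PS^2\mcy_1(\sigma)|\TOP 0$; combining this uniform convergence with $\HSN\TOP\sigma_0$ and the continuity of $\PS^2\mcy_1$ yields $n^{-1}\mcw_n\TOP\PS^2\mcy_1(\sigma_0)$.

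Next I would identify the limit Hessian with the variance, namely $-\PS^2\mcy_1(\sigma_0)=\Gamma_1$, and establish positive definiteness. Since the coefficients are deterministic, $H_n^1$ is (up to the negligible drift contribution already controlled in~(\ref{H1n-eq})) the exact Gaussian log-likelihood of the martingale increments $\Delta X^c\sim N(0,S_n(\sigma))$, so the information identity holds at the level of $n^{-1}H_n^1$; passing to the $T\to\infty$ Ces\`aro limit should identify $-\PS^2\mcy_1(\sigma_0)$ with $\Gamma_1=\lim_{T\to\infty}T^{-1}\int_0^T\gamma_{1,t}dt$. Concretely this can be verified by differentiating $y_{1,t}(\sigma)$ twice, evaluating at $\sigma=\sigma_0$ where $B_{l,t}(\sigma_0)=1$ and $\rho_t(\sigma_0)=\rho_{t,0}$, and matching the result with $-\gamma_{1,t}$. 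For positive definiteness, Proposition~\ref{mcy1-est-prop} shows $\mcy_1(\sigma)<0=\mcy_1(\sigma_0)$ for $\sigma\neq\sigma_0$, and its quadratic upper bound together with (A6) (using that the vanishing of $B_{1,t}-B_{2,t}$, $B_{1,t}B_{2,t}-1$ and $\rho_t-\rho_{t,0}$ is equivalent to $\Sigma_t(\sigma)=\Sigma_t(\sigma_0)$) yields $\mcy_1(\sigma)\leq -c|\sigma-\sigma_0|^2$ near $\sigma_0$ for some $c>0$; since $\mcy_1$ is $C^1$ with interior maximum at $\sigma_0$ we have $\PS\mcy_1(\sigma_0)=0$, and the quadratic bound then forces $\PS^2\mcy_1(\sigma_0)$ to be strictly negative definite, i.e. $\Gamma_1>0$.

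Finally, solving the expansion gives $\sqrt{n}(\HSN-\sigma_0)=-(n^{-1}\mcw_n)^{-1}n^{-1/2}\PS H_n^1(\sigma_0)=(\Gamma_1+o_p(1))^{-1}n^{-1/2}\PS H_n^1(\sigma_0)$, and Proposition~\ref{Hn1-st-conv-prop} together with Slutsky's theorem delivers $\sqrt{n}(\HSN-\sigma_0)\TOD\Gamma_1^{-1}N(0,\Gamma_1)=N(0,\Gamma_1^{-1})$. The hard part will be the information identity $-\PS^2\mcy_1(\sigma_0)=\Gamma_1$: it requires either a clean justification that the $T\to\infty$ limit commutes with the Gaussian information equality, or, more concretely, a careful second-order differentiation of the intricate integrand $y_{1,t}$ in which the cancellations at $\sigma=\sigma_0$ reproduce exactly the variance integrand $\gamma_{1,t}$.
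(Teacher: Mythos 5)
Your proposal is correct and follows essentially the same route as the paper: first-order condition $\PS H_n^1(\HSN)=0$ plus Taylor expansion, Hessian convergence from Proposition~\ref{H1n-conv-prop}, the score CLT from Proposition~\ref{Hn1-st-conv-prop}, and positive definiteness of $\Gamma_1$ from the quadratic bound $\mcy_1(\sigma)\leq -c|\sigma-\sigma_0|^2$ obtained via Proposition~\ref{mcy1-est-prop} and (A6). The only cosmetic differences are that the paper controls the Taylor remainder by $P$-tightness of $\sup_\sigma|n^{-1}\PS^3 H_n^1(\sigma)|$ rather than by your integral-form Hessian with uniform convergence, and that the identification $-\PS^2\mcy_1(\sigma_0)=\Gamma_1$, which you rightly flag as needing verification, is likewise asserted without a separate computation in the paper.
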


\begin{proof}

Proposition~\ref{mcy1-est-prop}, (A6) and Remark 4 in~\cite{ogi-yos14} yield
\EQQ{\mcy_1(\sigma)\leq -c |\sigma-\sigma_0|^2
}
for some positive constant $c$.
Therefore, $\Gamma_1=\PS^2 \mcy_1(\sigma_0)$ is positive definite.

By Taylor's formula and the equation $\PS H_n^1(\HSN)=0$, we have
\EQNN{-\PS H_n^1(\sigma_0)&=\PS H_n^1(\hat{\sigma}_n)-\PS H_n^1(\sigma_0) \\
&=\int_0^1\PS^2H_n^1(\sigma_t)dt(\hat{\sigma}_n-\sigma_0) \\
&=\PS^2H_n^1(\sigma_0)(\hat{\sigma}_n-\sigma_0)+(\hat{\sigma}_n-\sigma_0)^\top \int_0^1(1-t)\PS^3H_n^1(\sigma_t)dt(\hat{\sigma}_n-\sigma_0),
}
where $\sigma_t=t\hat{\sigma}_n+(1-t)\sigma_0$.

Therefore, we obtain
\EQ{\label{HSN-error-eq} \sqrt{n}(\HSN-\sigma_0)=\bigg\{-\frac{1}{n}\PS^2H_n^1(\sigma_0)-\frac{1}{n}\int_0^1(1-t)\PS^3H_n^1(\sigma_t)dt(\HSN-\sigma_0)\bigg\}^{-1}\cdot \frac{1}{\sqrt{n}}\PS H_n^1(\sigma_0).}

Since Proposition~\ref{H1n-conv-prop} yields
\EQQ{-\frac{1}{n}\PS^2 H_n^1(\sigma_0)\TOP -\PS^2\mcy_1(\sigma_0)=\Gamma_1,}
and Sobolev's inequality yields that
\EQQ{\bigg\{\sup_\sigma\bigg|\frac{1}{n}\PS^3H_n^1(\sigma)\bigg|\bigg\}_n}
is $P$-tight, we conclude
\EQ{\label{HSN-conv} \sqrt{n}(\HSN-\sigma_0)\TOD N(0,\Gamma_1^{-1}).}
\end{proof}

\subsection{Consistency of $\HTN$}\label{HTN-consis-subsection}

\begin{discuss}
{\colorr
\EQQ{H_n^2(\theta)=-\frac{1}{2}\bar{X}(\theta)^\top E S_n^{-1}(\HSN)E\bar{X}(\theta)+\sum_{i,l}\log f_\theta^l(\Delta_i^lX) 1_{\{|\Delta_i^l X|>c_n\}}-nh_n(\lambda^1(\theta)+\lambda^2(\theta)).}
}
\end{discuss}

Let
\EQQ{\mcy_2(\theta)=\lim_{T\to\infty}\frac{1}{T}\int_0^T\SUMP \bigg\{-\frac{1}{2}\sum_{l=1}^2f_p^{ll}\rho_{t,0}^{2p}\phi_{l,t}^2+f_p^{12}\rho_{t,0}^{2p+1}\phi_{1,t}\phi_{2,t}\bigg\}dt,}
which exists under (A1), (A3) and (A5).
\begin{proposition}
Assume (A1)--(A6). Then,
\EQ{\label{H2n-convergence} \sup_{\theta\in\Theta_2}\big|(nh_n)^{-1}\PT^k(H_n^2(\theta)-H_n^2(\theta_0))-\PT^k\mcy_2(\theta)\big|\TOP 0}
as $n\to\infty$ for $k\in \{0,1,2,3\}$.
\end{proposition}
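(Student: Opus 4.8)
The plan is to follow the template of Proposition~\ref{H1n-conv-prop}, exploiting that $H_n^2$ is quadratic in the drift increments. Writing $\bar{V}=V(\theta_0)$ and $X_t^c=\int_0^t b_s(\sigma_0)dW_s$, we have $\Delta X=\Delta X^c+\Delta\bar{V}$, so $\bar{X}(\theta_0)=\Delta X^c$ and $\bar{X}(\theta)=\Delta X^c-(\Delta V(\theta)-\Delta\bar{V})$. Substituting into the definition of $H_n^2$ and cancelling the common term $(\Delta X^c)^\top S_n^{-1}(\HSN)\Delta X^c$ (well-defined on $\{\bar{\rho}_n<1\}$ by~(\ref{bar-p-prob})) gives the identity
\EQQ{H_n^2(\theta)-H_n^2(\theta_0)=(\Delta V(\theta)-\Delta\bar{V})^\top S_n^{-1}(\HSN)\Delta X^c-\frac{1}{2}(\Delta V(\theta)-\Delta\bar{V})^\top S_n^{-1}(\HSN)(\Delta V(\theta)-\Delta\bar{V}).}
Since $S_n^{-1}(\HSN)$ is $\theta$-free, every $\PT^k$ acts only on $\Delta V(\theta)-\Delta\bar{V}$, whose blockwise entries $\int_{I_i^l}(\PT^k\mu_s^l(\theta)-\PT^k\mu_s^l(\theta_0))\,ds$ are controlled by (A1); in particular $|\mcd^{-1/2}\PT^k(\Delta V(\theta)-\Delta\bar{V})|^2=O_p(nh_n)$ as in~(\ref{DeltaV2}). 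So it suffices to treat the two terms above and their $\theta$-derivatives up to order four, the fourth order being needed for the Sobolev step.

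For the linear term the idea is that it is negligible after normalization. First I would replace $S_n^{-1}(\HSN)$ by $S_{n,0}^{-1}$: by the mean value theorem, Lemma~\ref{Sn-lemma}, (\ref{bar-p-prob}) and the rate $|\HSN-\sigma_0|=O_p(n^{-1/2})$ from~(\ref{HSN-conv}), one has $\lVert\mcd^{1/2}(S_n^{-1}(\HSN)-S_{n,0}^{-1})\mcd^{1/2}\rVert=O_p(n^{-1/2})$, and combined with $|\mcd^{-1/2}\Delta X^c|=O_p(\sqrt{n})$ (from $E_\Pi[\Delta X^c(\Delta X^c)^\top]=S_{n,0}$ and $M=O_p(n)$) and $|\mcd^{-1/2}(\Delta V(\theta)-\Delta\bar{V})|=O_p(\sqrt{nh_n})$ the replacement error is $O_p(\sqrt{nh_n})=o_p(nh_n)$; note this genuinely uses the $n^{-1/2}$ rate, not mere consistency. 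The remaining term $(\Delta V(\theta)-\Delta\bar{V})^\top S_{n,0}^{-1}\Delta X^c$ is conditionally centred given $\Pi$, with conditional variance $(\Delta V(\theta)-\Delta\bar{V})^\top S_{n,0}^{-1}(\Delta V(\theta)-\Delta\bar{V})=O_p(nh_n)$, hence it is $O_p(\sqrt{nh_n})=o_p(nh_n)$; the same holds for each $\PT^k$ by the derivative bounds, uniformly in $\theta$ by Sobolev's inequality.

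The core is the limit of the quadratic term. After the same $\HSN\to\sigma_0$ replacement (error $O_p(n^{-1/2})\cdot O_p(nh_n)=o_p(nh_n)$), I would expand $S_{n,0}^{-1}$ by~(\ref{InvS-eq}) and write, with $\tilde{D}^l=((\TSIG_{i,0}^l)^{-1/2}\Delta_i^l(V(\theta)-\bar{V}))_i$,
\EQQ{(\Delta V(\theta)-\Delta\bar{V})^\top S_{n,0}^{-1}(\Delta V(\theta)-\Delta\bar{V})=\SUMP\big\{(\tilde{D}^1)^\top(\TILG_0\TILG_0^\top)^p\tilde{D}^1-2(\tilde{D}^1)^\top(\TILG_0\TILG_0^\top)^p\TILG_0\tilde{D}^2+(\tilde{D}^2)^\top(\TILG_0^\top\TILG_0)^p\tilde{D}^2\big\}.}
Fixing a partition $(s_k)$ of mesh governed by the uniform-continuity modulus of (A1), exactly as in Proposition~\ref{H1n-conv-prop}, I would approximate $\tilde{D}_i^l\approx\phi_{l,s_{k-1}}(\theta)[\mfi_l]_i$ and $\rho_{ij}(\sigma_0)\approx\rho_{s_{k-1},0}$ on block $k$, turning $(\TILG_0\TILG_0^\top)^p$ into $\rho_{s_{k-1},0}^{2p}(GG^\top)^p$ localized by $\EK^1$ (and likewise for the mixed and second blocks), the $p$-tail being uniformly summable by Lemma~\ref{TILG-est-lemma} and~(\ref{bar-p-prob}) as in~(\ref{GGT-sum-est}). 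Assumption (A5) then replaces $\mfi_1\EK^1(GG^\top)^p\mfi_1$, $\mfi_1\EK^1(GG^\top)^pG\mfi_2$ and $\mfi_2\EK^2(G^\top G)^p\mfi_2$ by $f_p^{1,1},f_p^{1,2},f_p^{2,2}$ times $(s_k-s_{k-1})$, producing a Riemann sum; dividing by $T_n=nh_n$ and invoking (A3) for the time-averages of $\phi_{1,t}^{k_1}\phi_{2,t}^{k_2}\rho_{t,0}^{i_1}$ (and their $\PT$-derivatives), with the same $e_n$ book-keeping as in Proposition~\ref{H1n-conv-prop}, yields $(nh_n)^{-1}(\Delta V(\theta)-\Delta\bar{V})^\top S_{n,0}^{-1}(\Delta V(\theta)-\Delta\bar{V})\TOP -2\mcy_2(\theta)$ pointwise in $\theta$, so that $-\tfrac12$ times the normalized quadratic term converges to $\mcy_2(\theta)$, and similarly for each $\PT^k$.

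I expect the main obstacle to be making the two blockwise approximations in the quadratic term uniform in $\theta$ while simultaneously controlling the infinite sum over $p$: the per-block error in the (\ref{mca-est})-style estimates carries a factor $p\bar{\rho}_n^{2p-1}$, so one must verify on $\{\bar{\rho}_n<1\}$ that $\SUMP$ may be interchanged with the partition limit and that this survives differentiation in $\theta$ up to order four. Once pointwise convergence of $\PT^k$ for $k\in\{0,1,2,3,4\}$ together with $P$-tightness of the order-four derivative is in hand, Sobolev's inequality upgrades it to the uniform convergence~(\ref{H2n-convergence}) for $k\in\{0,1,2,3\}$, exactly as at the end of Proposition~\ref{H1n-conv-prop}.
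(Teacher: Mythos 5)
Your proposal is correct and follows essentially the same route as the paper's proof: the same splitting of $H_n^2(\theta)-H_n^2(\theta_0)$ into a term linear in $\Delta X^c$ and a quadratic form in $\Delta(V(\theta)-V(\theta_0))$, replacement of $S_n^{-1}(\HSN)$ by $S_{n,0}^{-1}$ using the rate from (\ref{HSN-conv}) together with Lemma~\ref{Sn-lemma} and (\ref{bar-p-prob}), a conditional-variance bound showing the linear term is negligible, identification of the quadratic limit via the expansion (\ref{InvS-eq}), blockwise freezing of $\phi_{l,t}$ and $\rho_{t,0}$, Assumptions (A5) and (A3), and a final Sobolev upgrade over $\theta$-derivatives up to order four. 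The only substantive difference is precision of the bookkeeping: the paper keeps the first-order term $(\HSN-\sigma_0)\PS S_{n,0}^{-1}$ explicit and controls it by a martingale (Burkholder--Davis--Gundy) argument so as to obtain the sharper remainder $o_p(\sqrt{nh_n})$ in (\ref{drift-consis-eq3}), which is reused later for the asymptotic normality of $\HTN$, whereas your coarser $o_p(nh_n)$ accounting suffices for this proposition alone.
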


\begin{proof}

Lemma~\ref{Sn-lemma} yields
\EQN{\label{drift-est-eq} &E_\Pi\big[(\Delta V(\theta)^\top \PS^m S_{n,0}^{-1}\Delta X^c)^2\big] \\
&\quad =\sum_{i_1,j_1}\sum_{i_2,j_2}[\PS^m S_{n,0}^{-1}]_{i_1,j_1}[\PS^m S_{n,0}^{-1}]_{i_2,j_2}\Delta_{i_1} V(\theta) \Delta_{i_2} V(\theta)E_\Pi[\Delta_{j_1}X^c \Delta_{j_2}X^c] \\
&\quad =\sum_{i_1,j_1}\sum_{i_2,j_2}[\PS^m S_{n,0}^{-1}]_{i_1,j_1}[\PS^m S_{n,0}^{-1}]_{i_2,j_2}\Delta_{i_1} V(\theta) \Delta_{i_2} V(\theta)[S_{n,0}]_{j_1,j_2} \\
&\quad \leq C|\mcd^{-1/2}\Delta V(\theta)|^2\lVert \mcd^{1/2}\PS^m S_{n,0}^{-1}\mcd^{1/2}\rVert^2\lVert \mcd^{-1/2}S_{n,0}\mcd^{-1/2}\rVert \\
&\quad \leq C(1-\bar{\rho}_n)^{-2m-2}\sum_i|I_i| \leq Cnh_n(1-\bar{\rho}_n)^{-2m-2}
}
on $\{\bar{\rho}_n<1\}$.

Since 
\EQQ{E_\Pi[|\mcd^{-1/2}\Delta X|^2]=\sum_i \frac{E_\Pi[|\Delta_i X|^2]}{|I_i|}\leq Cn,}
\EQQ{\bar{X}(\theta)^\top S_n^{-1}(\HSN)\bar{X}(\theta) -\Delta X^\top S_n^{-1}(\HSN)\Delta X 
=-\Delta V(\theta)^\top S_n^{-1}(\HSN)(2\Delta X - \Delta V(\theta)),}
and
\EQ{\label{Sn-exp} S_n^{-1}(\HSN)=S_{n,0}^{-1}+(\HSN-\sigma_0)\PS S_{n,0}^{-1}+\int_0^1(1-u)\PS^2S_n^{-1}(u\HSN+(1-u)\sigma_0)du (\HSN-\sigma_0)^2,}
(\ref{HSN-conv}), Lemma~\ref{Sn-lemma} and a similar estimate to (\ref{DeltaV2}) imply
\EQN{\label{drift-consis-eq1} &\sup_\theta\big|\bar{X}(\theta)^\top S_n^{-1}(\HSN)\bar{X}(\theta) -\Delta X^\top S_n^{-1}(\HSN)\Delta X
+\Delta V(\theta)^\top \big\{S_{n,0}^{-1}+(\HSN-\sigma_0)\PS S_{n,0}^{-1}\big\}(2\Delta X - \Delta V(\theta))\big| \\
&\quad =O_p((n^{-1/2})^2\cdot \sqrt{n}\cdot \sqrt{nh_n})=o_p(\sqrt{nh_n}).
}

Thanks to (\ref{drift-est-eq}) and Lemma~\ref{Sn-lemma}, we have
\EQN{\label{drift-consis-eq2} &\sup_\theta |\Delta V(\theta)^\top \big\{(\HSN-\sigma_0)\PS S_{n,0}^{-1}\big\}(2\Delta X - \Delta V(\theta))| \\
&\quad =\sup_\theta |\Delta V(\theta)^\top \big\{(\HSN-\sigma_0)\PS S_{n,0}^{-1}\big\}(2\Delta X^c + 2\Delta V(\theta_0) - \Delta V(\theta))| \\
&\quad \leq  \sup_\theta |2\Delta V(\theta)^\top\big\{(\HSN-\sigma_0)\PS S_{n,0}^{-1}\big\}\Delta X^c|
+C\sup_\theta|\mcd^{-1/2}\Delta V(\theta)|^2\lVert \mcd^{1/2}\PS S_{n,0}^{-1}\mcd^{1/2}\rVert |\HSN-\sigma_0| \\
&\quad \leq  \sup_\theta |2\Delta V(\theta)^\top\big\{(\HSN-\sigma_0)\PS S_{n,0}^{-1}\big\}\Delta X^c|+O_p(nh_n)\cdot O_p(n^{-1/2}).
}

For $k\in\{0,1\}$ and $q\geq 1$, the \BDG, Lemma~\ref{Sn-lemma} and a similar estimate to (\ref{DeltaV2}) yield
\EQN{\label{drift-consis-eq6} &\sup_\theta E_\Pi[|\PT^k \Delta V(\theta)^\top \PS S_{n,0}^{-1} \Delta X^c|^q]^{1/q} \\
&\quad \leq C_q\sup_\theta \SUMLL E_\Pi\bigg[\bigg|\sum_i[\PSINVS \PT^k\Delta V(\theta)]_{i+(l-1)M_1}\Delta_i^l X^c\bigg|^q\bigg]^{1/q} \\
&\quad \leq  C_q \sup_\theta \SUMLL \bigg(\sum_i [\PSINVS \PT^k\Delta V(\theta)]_{i+(l-1)M_1}^2|I_i^l|\bigg)^{1/2} \\
&\quad = C_q\sup_\theta \big(\PT^k\Delta V(\theta)^\top \PSINVS \mcd \PSINVS \PT^k\Delta V(\theta)\big)^{1/2} \\
&\quad \leq C_q\sqrt{nh_n}.
}
Together with (\ref{drift-consis-eq2}) and Sobolev's inequality, we have
\EQ{\label{drift-consis-eq5} \sup_\theta  |\Delta V(\theta)^\top \big\{(\HSN-\sigma_0)\PS S_{n,0}^{-1}\big\}(2\Delta X - \Delta V(\theta))|=o_p(\sqrt{nh_n}).}
Then, (\ref{drift-consis-eq1}) and (\ref{drift-consis-eq5}) yield
\EQN{\label{drift-consis-eq3} &\sup_\theta \big|\bar{X}(\theta)^\top S_n^{-1}(\HSN)\bar{X}(\theta) -\Delta X^\top S_n^{-1}(\HSN)\Delta X +2\Delta V(\theta)^\top  S_{n,0}^{-1}\Delta X^c +\Delta V(\theta)^\top S_{n,0}^{-1}(2\Delta V(\theta_0) - \Delta V(\theta))\big| \\
&\quad  =o_p(\sqrt{nh_n}).
}
Together with (\ref{drift-est-eq}), we obtain
\EQN{\label{drift-consis-eq4} &\sup_\theta \bigg|H_n^2(\theta)-H_n^2(\theta_0) \\
&\qquad -\bigg\{\Delta (V(\theta)-V(\theta_0))^\top S_{n,0}^{-1}\Delta X^c+\frac{1}{2}\Delta V(\theta)^\top S_{n,0}^{-1}(2\Delta V(\theta_0) - \Delta V(\theta))-\frac{1}{2}\Delta V(\theta_0)^\top S_{n,0}^{-1}\Delta V(\theta_0)\bigg\}\bigg| \\
&\quad =O_p(\sqrt{nh_n}),
}
and hence, similar estimates to (\ref{drift-consis-eq6}), we have
\EQQ{\sup_\theta \bigg|H_n^2(\theta)-H_n^2(\theta_0)+\frac{1}{2}\Delta (V(\theta) - V(\theta_0))^\top S_{n,0}^{-1}\Delta (V(\theta) - V(\theta_0))\bigg|=O_p(\sqrt{nh_n}).}

\begin{discuss}
{\colorr 
\EQQ{H_n^2(\theta)=-\frac{1}{2}\bar{X}(\theta)^\top E S_n^{-1}(\HSN)E\bar{X}(\theta)+\sum_{i,j}\log f_\theta^l(\Delta_i^lX) 1_{\{|\Delta_i^l X|>c_n\}}-nh_n(\lambda^1(\theta)+\lambda^2(\theta)).}
}
\end{discuss}

Then, (\ref{InvS-eq}) yields
\EQNN{&\Delta (V(\theta) - V(\theta_0))^\top S_{n,0}^{-1}\Delta (V(\theta) - V(\theta_0)) \\
&\quad =\Delta (V(\theta) - V(\theta_0))^\top \tilde{\mcd}^{-1/2}(\sigma_0)\sum_{p=0}^\infty \MAT{cc}{(\TILG\TILG^\top)^p & -(\TILG\TILG^\top)^p\TILG \\ -(\TILG^\top \TILG)^p\TILG^\top & (\TILG^\top \TILG)^p}\tilde{\mcd}^{-1/2}(\sigma_0)\Delta (V(\theta) - V(\theta_0)) \\
&\quad =\SUMP \SUMK \dot{\rho}_{k,0}^{2p}\bigg\{\SUMLL (\phi_{l,s_{k-1}})^2\dot{\mfi}_{k,l}^\top \dot{\mca}_{k,p}^l\dot{\mfi}_{k,l}
-2\dot{\rho}_{k,0}\phi_{1,s_{k-1}}\phi_{2,s_{k-1}}\dot{\mfi}_{k,1}^\top \dot{\mca}_{k,p}^1 G_k\dot{\mfi}_{k,2}\bigg\}+nh_ne_n,
}
where $\dot{\mfi}_{k,l}=\mce_{(k)}^l\dot{\mfi}_l$.
Together with (A3), (A5) and (\ref{drift-consis-eq4}), we obtain
\EQ{\label{H2n-convergence} \sup_\theta\big|(nh_n)^{-1}(H_n^2(\theta)-H_n^2(\theta_0)) - \mcy_2(\theta)\big|\TOP 0}
as $n\to\infty$.
Similar estimates for $(nh_n)^{-1}\PT^k(H_n^2(\theta)-H_n^2(\theta_0))$ $(k\in\{0,1,2,3,4\}$ yield the conclusion.

\end{proof}

\begin{proposition}
Assume (A1)--(A6). Then, $\HTN\TOP \theta_0$ as $n\to\infty$.
\end{proposition}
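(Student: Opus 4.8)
The plan is a standard argmax (M-estimation) argument built on the uniform convergence (\ref{H2n-convergence}) already established. Since $\phi_{l,t}(\theta_0)=0$ we have $\mcy_2(\theta_0)=0$, and $\mcy_2$ is a \emph{deterministic} continuous function (determinism because $\mu_t,b_t$ are nonrandom and the $f_p^{\cdot,\cdot}$ are constants; continuity on ${\rm clos}(\Theta_2)$ from (A3) and the remark after it). So the two ingredients I need are (i) a quantitative separation $\mcy_2(\theta)\leq -c|\theta-\theta_0|^2$ for some constant $c>0$, and (ii) the defining inequality $H_n^2(\HTN)\geq H_n^2(\theta_0)$. The whole scheme mirrors the consistency of $\HSN$ in (\ref{sigma-consis-eq}), with Proposition~\ref{mcy1-est-prop} replaced by step (i).

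First I would prove the separation (i). The proof of the preceding proposition (combine (\ref{drift-consis-eq4}) with (\ref{H2n-convergence})) yields the representation $\mcy_2(\theta)=-\tfrac12\lim_n(nh_n)^{-1}\Delta(V(\theta)-V(\theta_0))^\top S_{n,0}^{-1}\Delta(V(\theta)-V(\theta_0))$, and positive definiteness of $S_{n,0}^{-1}$ already gives $\mcy_2\leq 0$. For the quantitative bound, under (A1) one has $\sup_{t,\sigma}|\rho_t(\sigma)|\leq\rho_\ast$ with $\rho_\ast^2=1-c_1^2/c_2^2<1$, since $\det\Sigma_t=[\Sigma_t]_{11}[\Sigma_t]_{22}(1-\rho_t^2)\geq c_1^2$ and $[\Sigma_t]_{11}[\Sigma_t]_{22}\leq c_2^2$; the uniform-continuity argument behind (\ref{bar-p-prob}) then gives $\bar{\rho}_n\leq\rho_\ast+o_p(1)$. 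On the event $\{\bar{\rho}_n\leq\rho_\ast\}$ the factorisation $S_{n,0}=\TMCD^{1/2}(\mce_M+\bigl(\begin{smallmatrix}0 & \TILG \\ \TILG^\top & 0\end{smallmatrix}\bigr))\TMCD^{1/2}$ together with Lemma~\ref{TILG-est-lemma} gives $S_{n,0}\leq(1+\rho_\ast)\TMCD$, hence $S_{n,0}^{-1}\geq(1+\rho_\ast)^{-1}\TMCD^{-1}$ and therefore $\Delta(V(\theta)-V(\theta_0))^\top S_{n,0}^{-1}\Delta(V(\theta)-V(\theta_0))\geq(1+\rho_\ast)^{-1}\sum_{i,l}(\TSIG_{i,0}^l)^{-1}|\Delta_i^l(V(\theta)-V(\theta_0))|^2$.

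To finish (i) I would pass from the sampled increments to an integral. Using the uniform continuity of $\mu_t$ and $b_t$ in (A1) and $r_n\TOP0$ from (A2), each summand $(\TSIG_{i,0}^l)^{-1}|\Delta_i^l(V(\theta)-V(\theta_0))|^2$ equals $\int_{I_i^l}\phi_{l,t}^2\,dt$ up to an error that is $o(1)$ after division by $T_n=nh_n$ (this is the $e_n$-mechanism already used throughout). Summing and dividing by $T_n$, the right-hand side converges by (A3) to the genuine limit $\lim_{T\to\infty}T^{-1}\int_0^T\sum_l\phi_{l,t}^2\,dt$. Since $\phi_{l,t}^2\geq c_2^{-1}(\mu_t^l(\theta)-\mu_t^l(\theta_0))^2$, Assumption (A6) gives this limit is $\geq c_2^{-1}c_4|\theta-\theta_0|^2$, so passing to the probability limit on $\{\bar{\rho}_n\leq\rho_\ast\}$ yields $\mcy_2(\theta)\leq-\frac{c_4}{2(1+\rho_\ast)c_2}|\theta-\theta_0|^2$. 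Given this, consistency follows as in (\ref{sigma-consis-eq}): for $\delta>0$ set $\eta=-\sup_{\theta\in{\rm clos}(\Theta_2),\,|\theta-\theta_0|\geq\delta}\mcy_2(\theta)>0$ (positive and attained by continuity and compactness); on the event $\{\sup_\theta|(nh_n)^{-1}(H_n^2(\theta)-H_n^2(\theta_0))-\mcy_2(\theta)|<\eta\}$ the inequality $(nh_n)^{-1}(H_n^2(\HTN)-H_n^2(\theta_0))\geq0$ forces $\mcy_2(\HTN)>-\eta$, hence $|\HTN-\theta_0|<\delta$; by (\ref{H2n-convergence}) this event has probability tending to $1$, so $P(|\HTN-\theta_0|\geq\delta)\to0$.

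The main obstacle is step (i): converting the nonnegative discrete quadratic form into a strictly positive multiple of $|\theta-\theta_0|^2$. This is the drift analogue of Proposition~\ref{mcy1-est-prop}, and it rests on two points that must be handled carefully — the uniform-in-$n$ lower bound $(1+\rho_\ast)^{-1}$ for the smallest eigenvalue of $\TMCD^{-1/2}S_{n,0}\TMCD^{-1/2}$ (which is where $\rho_\ast<1$ enters), and the Riemann-sum passage from the sampled increments $\Delta_i^l(V(\theta)-V(\theta_0))$ to $\int_0^{T_n}\sum_l\phi_{l,t}^2\,dt$, controlled by the uniform continuity in (A1) with $r_n\TOP0$. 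Once these are in place, (A6) closes the identifiability gap and the argmax argument is routine.
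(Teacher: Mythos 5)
Your proposal is correct and follows essentially the same route as the paper's proof: the paper likewise bounds $S_{n,0}^{-1}$ from below by a constant multiple of the inverse diagonal matrix (via Lemma~\ref{Sn-lemma}, $\mcd^{1/2}S_{n,0}^{-1}\mcd^{1/2}\geq C\mce_M$, where you instead use $\lVert\TILG\rVert\leq\bar{\rho}_n\leq\rho_\ast$ to get $S_{n,0}^{-1}\geq(1+\rho_\ast)^{-1}\TMCD^{-1}$, an equivalent step since $c_1\mcd\leq\TMCD\leq c_2\mcd$ under (A1)), then passes by the same Riemann-sum mechanism (\ref{uniform-conti-eq}) to $\mcy_2(\theta)\leq -C\lim_{T\to\infty}T^{-1}\int_0^T(\phi_{1,t}^2+\phi_{2,t}^2)dt$, invokes (A6) for identifiability, and closes with the argmax argument of (\ref{sigma-consis-eq}). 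The only inessential difference is that you derive the quantitative separation $\mcy_2(\theta)\leq -c|\theta-\theta_0|^2$ inside this proof, whereas the paper records only that $\mcy_2(\theta)<0$ for $\theta\neq\theta_0$ here and obtains the quantitative form later, before (\ref{DXL-conv}).
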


\begin{proof}

By Lemma~\ref{Sn-lemma}, we have
\EQ{\mcd^{1/2}S_{n,0}^{-1}\mcd^{1/2}\geq \lVert \mcd^{-1/2}S_{n,0}\mcd^{-1/2}\rVert^{-1} \mce_M\geq C\mce_M.}
Therefore, together with (\ref{uniform-conti-eq}) and (\ref{H1n-est-eq1}), we obtain
\EQN{-\frac{1}{2}\Delta (V(\theta) - V(\theta_0))^\top S_{n,0}^{-1}\Delta (V(\theta) - V(\theta_0)) 
& \leq -C\Delta (V(\theta) - V(\theta_0))^\top \mcd^{-1}\Delta (V(\theta) - V(\theta_0)) \\
&= -C\SUMK \sum_{l=1}^2\sum_i \phi_{l,s_{k-1}}^2|I_i^l\cap J_k|+nh_ne_n \\
& = -C\int_0^{T_n}\sum_{l=1}^2\phi_{l,t}^2dt+nh_ne_n
}
Hence, we have
\EQ{\label{Y2-iden2} \mcy_2(\theta)\leq -C\lim_{T\to\infty}\bigg(\frac{1}{T}\int_0^T(\phi_{1,t}^2+\phi_{2,t}^2)dt\bigg).}

Assumption (A6) yields that for any $\theta\in\Theta$, 
\EQ{\label{Y2-iden} \mcy_2(\theta)\leq 0, \quad {\rm and} \quad \mcy_2(\theta)=0 \quad {\rm if~and~only~if} \quad \theta=\theta_0.}

(\ref{H2n-convergence}), (\ref{Y2-iden}) together with a similar estimates to (\ref{sigma-consis-eq}), we have the conclusion.

\end{proof}

\subsection{Asymptotic normality of $\HTN$}\label{HTN-clt-subsection}

\noindent
{\bf Proof of Theorem~\ref{main-thm}.}

A similar estimate to (\ref{drift-consis-eq3}) yields
\EQNN{\PT H_n^2(\theta_0)&=(\PT \Delta V(\theta_0))^\top  S_n^{-1}(\HSN)\bar{X}(\theta_0) \\
&= \PT\Delta V(\theta_0)^\top  S_{n,0}^{-1}\Delta X^c+\frac{1}{2}\PT \Delta V(\theta_0)^\top S_{n,0}^{-1}\Delta V(\theta_0) -\frac{1}{2}\Delta V(\theta_0)^\top S_{n,0}^{-1}\PT\Delta V(\theta_0) 
+o_p(\sqrt{nh_n}) \\
&=\PT\Delta V(\theta_0)^\top  S_{n,0}^{-1}\Delta X^c+o_p(\sqrt{nh_n}).
}

Let 
\EQQ{\dot{\mcx}_k=\frac{1}{\sqrt{nh_n}} \PT\Delta V(\theta_0) S_{n,0}^{-1}\Delta^{(k)} X^c
}
for $1\leq k\leq L_n$. 
Then, we have
\EQ{\label{H2n-eq} (nh_n)^{-1/2}\PT H_n^2(\theta_0)= \SUML \dot{\mcx}_k +o_p(1).}

Lemma~\ref{Sn-lemma} yields
\EQNN{\SUML E_k[\dot{\mcx}_k^4]&=\frac{3}{n^2h_n^2}\SUML \big\{\PT \Delta V(\theta_0)^\top S_{n,0}^{-1}S_{n,0}^{(k)}S_{n,0}^{-1}\PT \Delta V(\theta_0)\big\}^2 \\
&\leq \frac{C}{n^2h_n^2}|\mcd^{-1/2}\Delta \PT V(\theta_0)|^2\lVert \mcd^{1/2}S_{n,0}^{-1}\mcd^{1/2}\rVert^2 \SUML \lVert
\mcd^{-1/2}S_{n,0}^{(k)}\mcd^{-1/2}\rVert \leq \frac{CL_n}{nh_n} \TOP 0.
}
Moreover, simple calculation shows that
\EQNN{\SUML E_k[\dot{\mcx}_k^2]&=\frac{1}{nh_n}\SUML \sum_{i_1,j_1}\sum_{i_2,j_2}[S_{n,0}^{-1}]_{i_1,j_1}[S_{n,0}^{-1}]_{i_2,j_2} \Delta_{i_1} \PT V(\theta_0)\Delta_{i_2} \PT V(\theta_0)[S_{n,0}^{(k)}]_{j_1,j_2} \\
&=\frac{1}{nh_n}\Delta \PT V(\theta_0)^\top S_{n,0}^{-1}S_{n,0}S_{n,0}^{-1}\Delta \PT V(\theta_0) \\
&=\frac{1}{nh_n}\SUMP \SUMK \dot{\rho}_{k,0}^{2p}\bigg\{\SUMLL \PT \phi_{l,s_{k-1}}^2(\theta_0)\mfi_l^\top \mca_{k,p}^l\mfi_l
-2\dot{\rho}_{k,0}\PT\phi_{1,s_{k-1}}\PT \phi_{2,s_{k-1}}(\theta_0)\mfi_1^\top \mca_{k,p}^1G\mfi_2\bigg\}+e_n \\
&\TOP \Gamma_2.
}

Therefore, (\ref{H2n-eq}) and the martingale central limit theorem (Corollary 3.1 and the remark after that in Hall and Heyde~\cite{hal-hey80}) yield
\EQ{\label{DXL-conv} (nh_n)^{-1/2}\PT H_n^2(\theta_0)=\SUML \dot{\mcx}_k+o_p(1)\TOD N(0,\Gamma_2).}

By (\ref{Y2-iden2}) and (A5), there exists a positive constant $c$ such that $\mcy_2(\theta)\leq -c|\theta-\theta_0|^2$. 
Then, $\Gamma_2=\PT^2\mcy_2(\theta_0)$ is positive definite.

Therefore, a similar estimate to Section~\ref{HSN-clt-subsection}, $P$-tightness of 
$\{(nh_n)^{-1}\sup_\theta|\PT^3H_n^2(\theta)|\}_n$, and the equation $-(nh_n)^{-1}\PT^2H_n^2(\theta_0)\TOP \Gamma_2$ yield
\EQQ{\sqrt{T_n}(\HTN-\theta_0)\TOD N(0,\Gamma_2^{-1}).}

(\ref{HSN-error-eq}) and a similar equation for $\sqrt{nh_n}(\HTN-\theta_0)$ yield
\EQN{\label{joint-conv-eq} (\sqrt{n}(\HSN-\sigma_0),\sqrt{T_n}(\HTN-\theta_0))
&= (n^{-1/2}\Gamma_1^{-1}\PS H_n^1(\sigma_0),T_n^{-1/2}\Gamma_2^{-1}\PT H_n^2(\theta_0))+o_p(1) \\
&= \sum_{k=1}^{L_n}(\Gamma_1^{-1}\mcx_k,\Gamma_2^{-1}\dot{\mcx}_k)+o_p(1).
}
Then, since $\SUML E_k[\mcx_k\dot{\mcx}_k]=0$, we obtain 
\EQQ{(\sqrt{n}(\HSN-\sigma_0), \sqrt{nh_n}(\HTN-\theta_0))\TOD N(0,\Gamma^{-1}).}

\qed


\subsection{Proofs of the results in Sections~\ref{LAN-subsection} and~\ref{suff-cond-subsection}}\label{suff-cond-proof-subsection}

\noindent
{\bf Proof of Theorem~\ref{LAN-thm}.}

Let 
\EQQ{H_n(\sigma,\theta)=-\frac{1}{2}\bar{X}(\theta)^\top S_n^{-1}(\sigma)\bar{X}(\theta)
-\frac{1}{2}\log \det S_n(\sigma).}
Then, we have
\EQNN{& H_n(\sigma_u,\theta_u) \\
&\quad =\int_0^1 \partial_\alpha H_n(\sigma_{tu},\theta_{tu})dt \epsilon_n u \\
&\quad =u^\top \epsilon_n \partial_\alpha H_n(\sigma_0,\theta_0) 
+\frac{1}{2}u^\top \epsilon_n \partial_\alpha^2 H_n(\sigma_0,\theta_0)\epsilon_n u \\
&\quad \quad +\sum_{i,j,k}\int_0^1\frac{(1-s)^2}{2}\partial_{\alpha_i}\partial_{\alpha_j}\partial_{\alpha_k} H_n(\sigma_{su},\theta_{su})ds [\epsilon_n u]_i[\epsilon_n u]_j[\epsilon_n u]_k.
}
By  similar arguments to Propositions~\ref{H1n-conv-prop} and~\ref{Hn1-st-conv-prop}, and Sections~\ref{HTN-consis-subsection} and~\ref{HTN-clt-subsection}, we obtain
\EQNN{\sum_{i,j,k}\int_0^1\frac{(1-s)^2}{2}\partial_{\alpha_i}\partial_{\alpha_j}\partial_{\alpha_k} H_n(\sigma_{su},\theta_{su})ds [\epsilon_n u]_i[\epsilon_n u]_j[\epsilon_n u]_k&\TOP 0, \\
\epsilon_n \partial_\alpha H_n(\sigma_0,\theta_0) &\TOD N(0,\Gamma), \\
\epsilon_n \partial_\alpha^2 H_n(\sigma_0,\theta_0) \epsilon_n &\TOP \Gamma.
}	
Therefore, we have the desired conclusion.

\qed

\begin{discuss}
{\colorr $A^{p,+}$や$A^{p,-}$に対応するものは$(s_{j-1},s_j]$からはみ出したところも考えるか．

OgiYos14ではおそらく$t_k=k/[b_n]$, $b_n=n$. $X'_k$は$b_n=[h_n^{-1}]$をかけていて，(31)式は$b_n^{-1}X'_k$に対応するから$h_n$がつく．
$X'_k$のモーメントは(B1)で保証される．}
\end{discuss}

\noindent
{\bf Proof of Proposition~\ref{A3-suff-prop}.}

The proof is similar to the proof of Proposition 6 in~\cite{ogi-yos14}.

$P$-tightness of $\{h_nM_{l,q_n+1}\}_{n=1}^\infty$ imediately follows from (B1-$1$).
Fix $1\leq j\leq q_n$. In the proof of Proposition 6 in Section 7.5 of~\cite{ogi-yos14},
we definte $b_n=h_n^{-1}$, $t_k=s_{j-1}+k[h_n^{-1}]^{-1}(s_j-s_{j-1})$ $(0\leq k\leq [h_n^{-1}]$, 
and $X'_k={\rm tr}(\mce_{(j,k)}^1(GG^\top)^p)1_{A_{k,b_n^{\delta'}}^p}-E[{\rm tr}(\mce_{(j,k)}(GG^\top)^p)1_{A_{k,b_n^{\delta'}}^p}]$,
where $\mce_{(j,k)}^l$ be an $M_l\times M_l$ matrix satisfying $[\mce_{(j,k)}^l]_{ij}=1$ if $i=j$ and $\sup I_i^l\in (t_{k-1},t_k]$,
and otherwise $[\mce_{(j,k)}^l]_{ij}=0$.

Then, similarly to (31) in~\cite{ogi-yos14}, there exists $\eta>0$ such that for any $q\geq 4$, there exists $C_q>0$ that does not depend on $k$ such that
\EQQ{E\big[\big|h_n{\rm tr}(\mce_{(j,k)}(GG^\top)^p)-E[h_n{\rm tr}(\mce_{(j,k)}(GG^\top)^p)]\big|^q\big]
\leq C(p+1)^{q-1}h_n^{q\eta}.}
Therefore, by setting sufficiently large $q$ so that $nh_n^{1+q\eta}\to 0$, we have
\EQNN{&E\bigg[\max_{1\leq k\leq q_n}\big|h_n{\rm tr}(\mce_{(j,k)}(GG^\top)^p)-E[h_n{\rm tr}(\mce_{(j,k)}(GG^\top)^p)]\big|^q\bigg] \\
&\quad \leq E\bigg[\sum_{k=1}^{q_n}\big|h_n{\rm tr}(\mce_{(j,k)}(GG^\top)^p)-E[h_n{\rm tr}(\mce_{(j,k)}(GG^\top)^p)]\big|^q\bigg] \\
&\quad =O(nh_n \cdot h_n^{q\eta})\to 0.
}
Together with the assumptions, we obtain the conclusion.

\begin{discuss}
{\colorr $q>2$と$\eta\in (0,1)$に対して，(A3$'$-$q,\eta$)は(A3$'$)をimplyする．
$\int_0^{T_n}a_p(t)dt<\infty$はProp 6の証明で$f$のHolder連続で積分を近似するときに必要で今は不要．
}
\end{discuss}

\qed

\noindent
{\bf Proof of Proposition~\ref{A5-suff-cond-prop}.}

We use the proof of Proposition 6 in~\cite{ogi-yos14} again.
We define $b_n$ and $t_k$ the same as the previous proposition, and define
\EQQ{X'_k=[h_n]^{-1}\mfi_1^\top\mce_{(j,k)}(GG^\top)^p\mfi_11_{A_{k,b_n^{\delta'}}^p}-E[[h_n]^{-1}\mfi_1^\top\mce_{(j,k)}(GG^\top)^p\mfi_11_{A_{k,b_n^{\delta'}}^p}].}
Then, similarly to (31) in the proof, there exists $\eta>0$ such that for any $q\geq 4$, there exists $C_q>0$ such that
\EQQ{E\Big[\big|\mfi_1^\top\mce_{(j,k)}(GG^\top)^p\mfi_1-E[\mfi_1^\top\mce_{(j,k)}(GG^\top)^p\mfi_1]\big|^q\Big]\leq C_q(p+1)^{q-1}h_n^{q\eta}.}
Together with the assumptions and similar estimates for $\mfi_1\EK^1(GG^\top)^pG\mfi_2$ and $\mfi_2\EK^2(G^\top G)^p\mfi_2$, 
we obtain the conclusion.

\qed

\noindent
{\bf Proof of Proposition~\ref{a11-suff-cond-prop}.}

We can show the results by a similar approach to the proof of Proposition 9 in~\cite{ogi-yos14}.
Under (B2-$q$), $P(\mcn_{t+Nh_n}-\mcn_t=0)$ is small enough to estimate the denominator of
\EQQ{\sum_{i,j}\frac{|I_i\cap I_j|^2}{|I_i||I_j|}}
for sufficiently large $n$. Then, we obtain estimates for the numerator by using an inequality
$x_1^2+\cdots +x_n^2\geq R^2/n$ when $x_1+\cdots +x_n=R$.

\qed

\noindent
{\bf Proof of Lemma~\ref{A4A5-suff-cond-prop}.}

We only show 
\EQQ{\max_{1\leq k\leq q_n}|h_nE[{\rm tr}(\EK^1(GG^\top)^p)]-a_p^1(s_k-s_{k-1})|\to 0.}
The other results are similarly obtained.

(\ref{mixing-condition}) is satisfied because $\alpha_k^n\leq c_1e^{-c_2k}$ for some positive constants $c_1$ and $c_2$.

Let $\bar{\tau}_i^l$ be $i$-th jump time of $\BN^l$. Then, we have $\SI=h_n\bar{\tau}_i^l$.
Let $\bar{G}$ be a matrix with infinity side defined by
\EQQ{[\bar{G}]_{ij}=\frac{|[\bar{\tau}_{i-1}^1,\bar{\tau}_i^1)\cap [\bar{\tau}_{j-1}^2,\bar{\tau}_j^2)|}{\sqrt{\bar{\tau}_i^1-\bar{\tau}_{i-1}^1}\sqrt{\bar{\tau}_j^2-\bar{\tau}_{j-1}^2}}}
for $i,j\geq 1$.

For $k\in\mbbn$, let
\EQQ{\mfg_k^p=\sum_{i;\bar{\tau}_{i-1}^1\in [k-1,k)}[(\bar{G}\bar{G}^\top)^p]_{ii}, \quad 
\mfg_k^{n,p}=\sum_{i;S_{i-1}^{n,1}\in [(k-1)h_n,kh_n)}[(GG^\top)^p]_{ii}.}
The following idea is based on Section 7.5 of~\cite{ogi-yos14}. Roughly speaking, if there are sufficient observations around the interval $[k-1,k)$, we can apply mixing property of $\BN_t^{n,l}$ to $\mfg_k^p$.
On the following sets $A_{k,r}^p$ and $\bar{A}_{k,r}^p$, we have sufficient observations of $\mcn^{n,l}$ and $\BN^l$.
Let $\bar{\Delta}_{j,t}^r U=U_{t+rj}-U_{t+r(j-1)}$ for a stochastic process $(U_t)_{t\geq 0}$, and let

\EQN{A_{k,r}^p&=\bigcap_{l=1,2}\bigg\{\bigcap_{\substack{1\leq j\leq 2p+1 \\ t_k+rjh_n\leq T_n}}\{\bar{\Delta}_{j,t_k}^{rh_n}\mcn^{n,l}>0\} 
\cap \bigcap_{\substack{-2p\leq j\leq 0 \\ t_{k-1}+r(j-1)h_n\geq 0}}\{\bar{\Delta}_{j,t_{k-1}}^{rh_n}\mcn^{n,l}>0\}\bigg\}, \\
\bar{A}_{k,r}^p&=\bigcap_{l=1,2}\bigg\{\bigcap_{1\leq j\leq 2p+1}\{\bar{\Delta}_{j,k}^{r}\BN^l>0\} 
\cap \bigcap_{\substack{-2p\leq j\leq 0 \\ k-1+r(j-1)\geq 0}}\{\bar{\Delta}_{j,k-1}^r\BN^l>0\}\bigg\}.
}

Then, we obtain
\EQNN{E[\mfg_k^p1_{\bar{A}_{k,r}^p}]&=E[\mfg_{k'}^p1_{\bar{A}_{k',r}^p}] \quad {\rm if} \quad k\wedge k'\geq rp+1, \\
E[\mfg_k^{n,p}1_{A_{k,r}^p}]&=E[\mfg_{k'}^{n,p}1_{A_{k',r}^p}] \quad {\rm if} \quad rp+1\leq k,k'\leq n-rp.
}
We also have $P((\bar{A}_{k,r}^p)^c)\leq C(p+1)r^{-q}$ by (B2-$q$).
For any $\epsilon>0$, there exists $r>0$ such that
\EQ{\label{barAc-est} P((\bar{A}_{k,r}^p)^c)<\epsilon/2.}
Therefore, $\{E[\mfg_k^p]\}_k$ is a Cauchy sequence, and hence, the limit $a_p^1=\lim_{k\to\infty}E[\mfg_k^p]$ exists for $p\in \mbbn$. 
Moreover, we see existence of 
\EQQ{a_0^l=\lim_{k\to\infty}E[\bar{\mcn}_k^l-\bar{\mcn}_{k-1}^l]=E[\bar{\mcn}_1^l-\bar{\mcn}_0^l]}
for $l\in \{1,2\}$.

Furthermore, for any $\epsilon>0$, there exists $r>0$ such that
$P((\bar{A}_{k,r}^p)^c)<\epsilon$ and $|E[\mfg_k^p]-a_p^1|<\epsilon$ for $k\geq [rp]$.
Let $r_j=[h_n^{-1}s_j]$. Then, since $|\mfg_k^{n,p}|\leq \sum_{i;\SIm \in ((k-1)h_n,kh_n]}1\leq E[\bar{\mcn}_1^1]$ and
\EQQ{\sup I_i^l\in (s_{j-1},s_j] \quad \Longleftrightarrow \quad \bar{\tau}_i^l\in (h_n^{-1}s_{j-1},h_n^{-1}s_j],}
the \CS ~yields
\EQNN{&|h_n(s_j-s_{j-1})^{-1}E[{\rm tr}(\mce_{(j)}(GG^\top)^p)]-a_p^1| \\
&\quad \leq \bigg|h_n(s_j-s_{j-1})^{-1}\sum_{k=r_{j-1}+1}^{r_j}\mfg_k^{n,p}-a_p^1\bigg|+2h_n(s_j-s_{j-1})^{-1}E[\BN_1^1] \\
&\quad \leq \bigg|\frac{1}{r_j-r_{j-1}}\sum_{k=r_{j-1}+1}^{r_j}\mfg_k^{n,p}-a_p^1\bigg|+Ch_n(s_j-s_{j-1})^{-1} \\
&\quad \leq \frac{1}{r_j-r_{j-1}}\sum_{k=r_{j-1}+1}^{r_j}\big|E[\mfg_k^{n,p}1_{A_{k,h}^p}]+E[\mfg_k^{n,p}1_{(A_{k,h}^p)^c}]-a_p^1\big|+Ch_n(s_j-s_{j-1})^{-1} \\
&\quad \leq \frac{1}{r_j-r_{j-1}}\sum_{k=r_{j-1}+1}^{r_j}\big(\big|E[\mfg_k^p]-a_p^1\big|+2E[(\bar{\mcn}_1^1)^2]^{1/2}\sqrt{\epsilon}\big)+Ch_n(s_j-s_{j-1})^{-1} \\
&\quad \leq \epsilon+2E[(\bar{\mcn}_1^1)^2]^{1/2}\sqrt{\epsilon}+Ch_n(s_j-s_{j-1})^{-1}.
}
we replace the minimum $r_{j-1}+1$ of the summation range of $k$ with $r_{j-1}+[rp]+2$ when $j=1$,
and replace the maximum $r_j$ with $r_j-[rp]-1$ when $j=q_n$.
Then, the conclusion.

\qed

\noindent 
{\bf Acknowledgements}
On behalf of all authors, the corresponding author states that there is no conflict of interest.

\begin{small}
\bibliographystyle{abbrv}
\bibliography{referenceLibrary_mathsci,referenceLibrary_other}
\end{small}

\renewcommand{\thesection}{\Alph{section}}
\setcounter{section}{0}
\section{Appendix}

\begin{lemma}\label{XAX-lemma}
Let $m\in \mbbn$. Let $V$ be an $m\times m$ symmetric, positive definite matrix and $A$ be a $m\times m$ matrix. 
Let $X$ be a random variable following $N(0,V)$. Then
\EQNN{E[(X^\top AX)^2]&={\rm tr}(AV)^2+2{\rm tr}((AV)^2), \\
E[(X^\top AX)^3]&={\rm tr}(AV)^3+6{\rm tr}(AV){\rm tr}((AV)^2)+8{\rm tr}((AV)^3), \\
E[(X^\top AX)^4]&={\rm tr}(AV)^4+12{\rm tr}(AV)^2{\rm tr}((AV)^2)+12{\rm tr}((AV)^2)^2+32{\rm tr}(AV){\rm tr}((AV)^3)+48{\rm tr}((AV)^4).
}
\end{lemma}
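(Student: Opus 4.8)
The plan is to reduce the general Gaussian quadratic form to a weighted sum of independent $\chi^2_1$ variables and then read off the moments from the cumulant generating function. Throughout I use that $X^\top A X=X^\top\tfrac{1}{2}(A+A^\top)X$ depends only on the symmetric part of $A$, so I may assume $A=A^\top$; this is precisely the situation in every application of the lemma (for instance $A=\PSINVS$ with $V=\SL$ in the proof of Proposition~\ref{Hn1-st-conv-prop}). Writing $X=V^{1/2}Z$ with $Z\sim N(0,\mce_m)$ gives $X^\top AX=Z^\top BZ$ for the symmetric matrix $B=V^{1/2}AV^{1/2}$, and cyclic invariance of the trace yields ${\rm tr}(B^r)={\rm tr}((V^{1/2}AV^{1/2})^r)={\rm tr}((AV)^r)$ for every $r\in\mbbn$. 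Diagonalizing $B=O^\top\Lambda O$ with $O$ orthogonal and $\Lambda={\rm diag}(\lambda_1,\dots,\lambda_m)$, and setting $W=OZ\sim N(0,\mce_m)$, I obtain $X^\top AX=\sum_{i=1}^m\lambda_iW_i^2$, a linear combination of independent $\chi^2_1$ variables whose power sums $p_r:=\sum_i\lambda_i^r$ satisfy $p_r={\rm tr}(B^r)={\rm tr}((AV)^r)$.

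Next I would compute the cumulants of $Q:=\sum_i\lambda_iW_i^2$. Since $E[e^{t\lambda_iW_i^2}]=(1-2\lambda_it)^{-1/2}$ for $t$ in a neighbourhood of $0$, the cumulant generating function is
\[
\log E[e^{tQ}]=-\tfrac12\sum_{i=1}^m\log(1-2\lambda_it)=\sum_{r=1}^\infty\frac{2^{r-1}}{r}\Big(\sum_i\lambda_i^r\Big)t^r,
\]
so the $r$-th cumulant is $\kappa_r=2^{r-1}(r-1)!\,p_r=2^{r-1}(r-1)!\,{\rm tr}((AV)^r)$. In particular $\kappa_1={\rm tr}(AV)$, $\kappa_2=2\,{\rm tr}((AV)^2)$, $\kappa_3=8\,{\rm tr}((AV)^3)$, and $\kappa_4=48\,{\rm tr}((AV)^4)$.

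Finally I would pass from cumulants to raw moments via the standard moment--cumulant (Bell polynomial) identities, whose coefficients count set partitions of $\{1,\dots,n\}$ by block-size type:
\begin{align*}
E[Q^2]&=\kappa_2+\kappa_1^2,\\
E[Q^3]&=\kappa_3+3\kappa_1\kappa_2+\kappa_1^3,\\
E[Q^4]&=\kappa_4+4\kappa_1\kappa_3+3\kappa_2^2+6\kappa_1^2\kappa_2+\kappa_1^4.
\end{align*}
Substituting the values of $\kappa_r$ gives exactly the three claimed formulas: the degree-four case reproduces the coefficients $1,12,12,32,48$ through $\kappa_4=48\,p_4$, $4\kappa_1\kappa_3=32\,p_1p_3$, $3\kappa_2^2=12\,p_2^2$, $6\kappa_1^2\kappa_2=12\,p_1^2p_2$, and $\kappa_1^4=p_1^4$. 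The only nontrivial bookkeeping is getting the partition multiplicities $1,4,3,6,1$ right, which I would check against the Bell number $B_4=15$. An equivalent route, useful as a cross-check, is a direct application of Isserlis--Wick's theorem: expanding $(X^\top AX)^n$ and pairing the $2n$ centered Gaussian factors produces alternating $A$--$V$ cycles, a cycle of length $2s$ contributing ${\rm tr}((AV)^s)$; here the main obstacle is exactly the enumeration of the $7!!=105$ pairings in the degree-four case into the five cycle types with multiplicities $1,12,12,32,48$, and verifying $1+12+12+32+48=105$ provides the decisive consistency check.
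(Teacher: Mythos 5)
Your proof is correct, but it takes a genuinely different route from the paper's. The paper diagonalizes $V$ itself (writing $UVU^\top=\Lambda$ with $U$ orthogonal) and then applies the Isserlis--Wick pairing formula to the eighth-order Gaussian moment, so the whole burden falls on sorting the $105$ pairings into the five cycle types with multiplicities $1,12,12,32,48$ --- exactly the bookkeeping you relegate to a cross-check; moreover, the paper writes this out only for the fourth moment and asserts the lower ones analogously. You instead whiten ($X=V^{1/2}Z$), diagonalize $B=V^{1/2}AV^{1/2}$, reduce $X^\top AX$ to a weighted sum of independent $\chi^2_1$ variables, and read off all cumulants at once from the cumulant generating function, $\kappa_r=2^{r-1}(r-1)!\,{\rm tr}((AV)^r)$; the combinatorics is then absorbed into the standard moment--cumulant identities, whose coefficients $1,4,3,6,1$ are much easier to certify than the pairing counts. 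This buys you all three formulas (indeed every moment) uniformly and with less error-prone counting, at the price of needing $A$ symmetric so that $B$ admits an orthogonal diagonalization.

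One caveat on your opening step: replacing $A$ by $\tfrac{1}{2}(A+A^\top)$ leaves $X^\top AX$ unchanged but does change the right-hand sides, since ${\rm tr}((AV)^r)$ and ${\rm tr}\big((\tfrac{1}{2}(A+A^\top)V)^r\big)$ differ in general for $r\geq 2$. In fact the lemma as stated is false for non-symmetric $A$: taking $A$ antisymmetric and $V=\mce_m$ gives $X^\top AX\equiv 0$, while ${\rm tr}(AV)^2+2{\rm tr}((AV)^2)=2\,{\rm tr}(A^2)<0$. So ``we may assume $A=A^\top$'' is not a legitimate reduction; symmetry of $A$ is a genuine hypothesis without which the identities fail. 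This defect is shared by the paper (its pairing count implicitly identifies $B=UAU^\top$ with $B^\top$), and it is harmless in context since, as you note, every application uses a symmetric $A$ such as $\PS S_{n,0}^{-1}$ --- but you should state symmetry as an assumption rather than derive it as a reduction.
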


\begin{proof}
We only show the result for $E[(X^\top AX)^4]$.
Let $U$ be an orthogonal matrix and $\Lambda$ be a diagonal matrix satisfying $UVU^\top =\Lambda$.
Then, we have $UX\sim N(0,\Lambda)$, and 
\EQQ{E\bigg[\prod_{i=1}^8[UX]_{j_i}\bigg]=\sum_{(l_{2q-1},l_{2q})_{q=1}^4}\prod_{q=1}^4[\Lambda]_{l_{2q-1},l_{2q}},}
where the summation of $(l_{2q-1},l_{2q})_{q=1}^4$ is taken over all disjoint pairs of $\{j_1,\cdots j_8\}$.
Then, by setting $B=UAU^\top$, we have
\EQQ{E[(X^\top AX)^4]=\sum_{j_1,\cdots, j_8}\sum_{(l_{2q-1},l_{2q})_{q=1}^4}\prod_{p=1}^4[B]_{j_{2p-1},j_{2p}}\prod_{q=1}^4[\Lambda]_{l_{2q-1},l_{2q}},}
which yields the conclusion.
\end{proof}

\begin{discuss}
{\colorr 一次元の場合の標準正規分布のモーメント$E[X^{2p}]=(2p-1)!!$を見ると係数は正しそう．

 $E[(X^\top AX)^3]$の右辺第２項は閉じたペアが１つとクロスしたペアが１つで，係数は閉じたペアの選び方が３通り×クロスしたペアの中で連結の仕方が２通りあるので６通り，第３項は３ペアの連結で，連結の仕方は$j_1$と連結する者が４通りと，その中で$j_2$と連結する者が２通りなので８通り．

 $E[(X^\top AX)^4]$の右辺第２項は閉じたペア２つとクロスしたペアが１つで，閉じたペアの選び方が６通り×クロスしたペアの連結の仕方が２通りで１２通り．
 右辺第３項はクロスしたペアが２つで，クロスの選び方は$6/2=3$通り（(1,3)と(2,4)は同一視される)でクロスの連結の仕方がそれぞれ２通りなので12通り．
 右辺第４項は閉じたペア１つと３ペアの連結は，閉じたペアの選び方４通りと３ペアの連結が４×２通りなので32通り．
 右辺第５項は４ペアの連結が６×４×２＝４８通り．
}
\end{discuss}


\begin{discuss2}
{\colorr 
\section{数値実験}

$(t_k^n)_k$: $[0,T]$の分割．$n_k^{c,j}$: $j$成分の$k$番目の分割内の閾値以下の増分の数, $n_k^c=\sum_jn_k^{c,j}$.

\subsection{$\sigma$の推定の高速化}

\EQQ{H^1=\sum_k(X_k^\top S_k^{-1}X_k+\log \det S_k)}
の最大化．
\EQQ{S_k=\MAT{ccc}{\Sigma_{11}G_{11} & \cdots & \Sigma_{1m}G_{1m} \\ \vdots & \ddots & \vdots \\ \Sigma_{m1}G_{m1} & \cdots & \Sigma_{mm}G_{mm}}=D_\Sigma + G_\Sigma=D_\Sigma^{1/2}(I+D_\Sigma^{-1/2}G_\Sigma D_\Sigma^{-1/2})D_\Sigma^{1/2}.}

$E_j$を$n_k^c\times n_k^{c,j}$行列で$[E_j]_{lm}=1$ if $l-m=\sum_{1\leq j'\leq j-1}n_k^{c,j'}$, $[E_j]_{lm}=0$ otherwiseとする．

\EQNN{S_k^{-1}&=D_\Sigma^{-1/2}\sum_{p=0}^\infty (D_\Sigma^{-1/2}G_\Sigma D_\Sigma^{-1/2})^pD_\Sigma^{-1/2} \\
  &=\sum_{p=0}^\infty D_\Sigma^{-1}(G_\Sigma D_\Sigma^{-1})^p \\
  &=\sum_{p=0}^\infty \sum_{\substack{1\leq i_0,\cdots, i_p\leq m \\ i_{k-1}\neq i_k}}E_{i_0}^\top \Sigma_{i_0,i_0}D_{i_0,i_0}^{-1}
  \bigg(\prod_{k=1}^p\Sigma_{i_{k-1},i_k}^{-1}G_{i_{k-1},i_k}\Sigma_{i_k,i_k}^{-1}D_{i_k.i_k}^{-1}\bigg)E_{i_p}.}

\EQQ{X_k^\top S_k^{-1}X_k=\sum_{p=0}^\infty \sum_{\substack{1\leq i_0,\cdots, i_p\leq m \\ i_{k-1}\neq i_k}} 
  \bigg(\Sigma_{i_0,i_0}^{-1}\prod_{k=1}^p\Sigma_{i_{k-1},i_k}\Sigma_{i_k,i_k}^{-1}\bigg)
  \bigg(Z_{i_0}^\top D_{i_0,i_0}^{-1} \prod_{k=1}^p G_{i_{k-1},i_k}D_{i_k,i_k}^{-1}Z_{i_k}\bigg),}
\EQNN{\log \det S_k&=\log\det D_\Sigma +\log \det (I+D_\Sigma^{-1/2}G_\Sigma D_\Sigma^{-1/2}) \\
&=\log\det D_\Sigma +\sum_{p=1}^\infty (-1)^{p-1} {\rm tr}((D_\Sigma^{-1}G_\Sigma)^p) \\
&=\log\det D_\Sigma +\sum_{p=1}^\infty (-1)^{p-1} \sum_{\substack{1\leq i_0,\cdots, i_p\leq m \\ i_{k-1}\neq i_k}} 
{\rm tr}\bigg(\prod_{k=1}^p \Sigma_{i_k,i_k}\Sigma_{i_k,i_{k+1}}D_{i_k,i_k}^{-1}G_{i_k,i_{k+1}}\bigg).}

$d=2$なら$i_0$が決まれば$i_1,\cdots, i_p$が決まる．

\subsection{$\mu$の推定}

$v_k^l(\mu_l)=(|I_i^l|)_{i;(\SIm,\SI]\subset (t_{k-1}^n,t_k^n]}$, $v_k(\mu)=(v_k^1(\mu_1)^\top ,\cdots, v_k^m(\mu_m)^\top)^\top$として
\EQQ{-\frac{1}{2}\sum_k(\Delta_k X-v_k(\mu))^\top S_k^{-1}(\Delta_k X-v_k(\mu)).}

\EQQ{\sum_j\mu_j\sum_k(v_k^j)^\top[S_k^{-1}]^{j,\cdot} X_k -\frac{1}{2}\sum_{j_1,j_2}\mu_{j_1}\mu_{j_2}\sum_k(v_k^{j_1})^\top[S_k^{-1}]^{j_1,j_2}v_k^{j_2}}
の最大化．

\subsection{ジャンプ推定}

$Y_i^j\sim N(0,v_j^2)$.

\EQQ{H^j_J(\lambda, v)=\sum_i \log f^j(\Delta_i^j X) 1_{\{|\Delta_i^j X|>c_n\}} -nh_n\lambda^j}
の最大化.

\EQQ{\log f^j(\Delta_i^j X) =\log \lambda^j-\frac{1}{2}\frac{(\Delta_i^j X)^2}{v_j^2}-\frac{1}{2}\log \det v_j^2}
なので，$CC_j=\sum_i(\Delta_i^j X)^2$, $n_J^j=\sum_i1_{\{|\Delta_i^j X|>c_n\}}$として，
\EQQ{\hat{\lambda}^j=\frac{n_J^j}{nh_n}, \quad \hat{v}_j^2=\frac{CC_j}{n_J^j}.}

}
\end{discuss2}

\end{document}